\setlist{itemsep=0.4pt,topsep=0pt}
\newcommand{\rk}{\mathrm{rk}}
\DeclareMathOperator{\len}{length}
\DeclareMathOperator{\bl}{b}
\newtheoremstyle{note}
{7pt}
{10pt}
{\normalfont}
{}
{\bfseries}
{ }
{.5em}
{}
\theoremstyle{note}
\newtheorem{thm}{Theorem}[section]
\newtheorem{lemma}[thm]{Lemma} 
\newtheorem{df}[thm]{Definition}
\newtheorem{rmk}[thm]{Remark} 
\newtheorem{prop}[thm]{Proposition}
\newtheorem{cor}[thm]{Corollary}
\newtheorem{ex}[thm]{Example}
\title{Endomorphisms of free Steiner quasigroups}
\author{Silvia Barbina,  Enrique Casanovas \thanks{Research partially supported by grants PID2020-116773GB-I00 and PID2023-147428NB-I00 from the Spanish Government. The first author has also been supported by PRIN2022 \textit{Models, sets and classifications}, prot. 2022TECZJA.}}
\begin{document}
		\maketitle
	\begin{abstract}
 A free Steiner quasigroup is a free object in the variety of Steiner quasigroups. Free Steiner quasigroups are characterised by the existence of a levelled construction that starts with a \textit{free base} --- that is, a set of elements none of which is a product of the others, and which generate the quasigroup. Then each element in a free Steiner quasigroup $M$ can be obtained as a term on the free base. We characterise homomorphisms between substructures of a free Steiner quasigroup where one generator is replaced by a term in the original generators. The characterisation depends on certain synctactic properties of the term in question.
	\end{abstract}
	\section{Introduction}

A \textbf{Steiner triple system} (STS) of order $\lambda$ is a pair $(M, \mathcal{B})$ where $M$ is a set with $\lambda$ elements, and 
 $\mathcal{B}$ is a collection of 3-element subsets of $M$ (the \textbf{blocks})
such that any two $x, y \in M$ are contained in exactly one block. If we require that any two $x,y \in M$ are contained in \textit{at most }one block, $M$ is said to be a  \textbf{partial Steiner triple system}.

From a model theoretic standpoint, Steiner triple systems can be viewed as relational structures with a single a ternary relation $R$ interpreted as the set of blocks (that is, three elements $x, y$ and $z$ are in a block if and only if $R(x,y,z)$). Then $R$ must satisfy
\begin{itemize}
\item for all permutations $\sigma \in S_3$, $ \forall x_1 x_2 x_3 \, \left(R(x_1, x_2, x_3) \rightarrow R(x_{\sigma(1)}, x_{\sigma(2)}, x_{\sigma(3)} \right)$
\item $\forall x y z \left( R(x,y,z) \rightarrow y\neq z \right)$
\item $\forall x y w z \left( R(x,y,w) \wedge R(x,y,z) \rightarrow w=z \right)$.
\end{itemize}
With this choice of language, a substructure of a Steiner triple system is a partial Steiner triple system.

An STS $(M, \mathcal{B})$ determines a \textit{Steiner quasigroup}, that is, a pair $(M, \cdot)$ where $\cdot$ is a binary operation such that
\begin{itemize}
\item $\forall x \, (x\cdot x =x)$
\item if $x \neq y$, then $x \cdot y = z$ where $\{x,y,z \} \in \mathcal{B}$.
\end{itemize}
The operation $\cdot$ defined in this way satisfies
 \begin{enumerate}
\item $\forall x \, (x \cdot y= y\cdot x)$
\item $\forall x \, (x\cdot x =x)$
\item $\forall x \, (x\cdot (x\cdot y)=y)$.
\end{enumerate}
Conversely, any structure $(M, \cdot)$ where $\cdot$ satisfies properties 1, 2 and 3 determines an STS $(M, \mathcal{B})$.

In universal algebraic parlance, Steiner quasigroups are algebras. Thus, if $M$ is a Steiner quasigroup, there are standard notions of a \textit{subalgebra} of $M$, and of the subalgebra \textit{generated} by a subset $A \subseteq M$. A subalgebra of a Steiner quasigroup is a substructure in the model theoretic sense, and in particular it is a Steiner quasigroup. If $A \subseteq M$, then $\langle A \rangle_M$ denotes the substructure of $M$ generated by $A$. The subscript $M$ is usually omitted when clear  from the context.

A Steiner quasigroup induces a partial STS structure on each of its subsets, so we often refer to a partial Steiner triple system when we need to consider a subset of a Steiner quasigroup that is not a substructure.

Given a class  $\mathcal{V}$ of algebras in a given signature, we say that a member $U$ of $\mathcal{V}$ generated by a subset $X \subseteq U$ has the  \textbf{universal mapping property} (UMP) over $X$ with respect to $\mathcal{V}$ if for every $V \in \mathcal{V}$, every map
 \[ \varphi : X \to V \]
extends to a homomorphism $ \hat{\varphi} : U \to V $. A standard argument proves that the homomorphism $\hat{\varphi}$ is unique. Such an algebra $U$ is said to be \textbf{freely generated by $X$}, and $X$ is a \textbf{set of free generators} of $U$. 

It is well known that when $\mathcal{V}$ is a variety, given any cardinal $\lambda \geq 1$ there are $U \in \mathcal{V}$ and $X \subseteq U$ such that $U$ is freely generated by $X $ and $|X|= \lambda$.

Since the class $\mathcal{K}$ of Steiner quasigroups is a variety in the universal algebraic sense, for each cardinal $\lambda \geq 1$, there is a Steiner quasigroup that is freely generated by a set of cardinality $\lambda$. Then a \textbf{free Steiner quasigroup} is a Steiner quasigroup that is freely generated by some subset $X$, and so it has UMP over $X$ with respect to the class~$\mathcal{K}$. We assume $\lambda \geq 3$ to exclude the trivial cases of Steiner quasigroups with one and three elements.

In Section~2 we give the definitions of free base and independent set, and we recall various results concerning free bases, including the uniqueness, up to isomorphism, of the Steiner quasigroup freely generated by a base of cardinality $\lambda$. In Section~3, we explain how a free Steiner quasigroup can be seen as a levelled construction from a free base, and we define the related notions of free and hyperfree orderings. We also characterise the existence of a hyperfree ordering via a property that can be expressed by a set of first-order formulas in the language of Steiner triple systems. In Section~4, we introduce the notions of equivalent and reduced terms, and a rank for terms, and in Section~5 we apply these notions to characterise the independence of a tuple in terms of its behaviour with respect to reduced terms. Finally, in Section~6 we describe endomorphisms of  Steiner quasigroups generated by an independent tuple.

\section{Independence and free bases}

In this section we review the definitions of an \textit{independent} set of generators of a Steiner quasigroup and the related notion of free base. For convenience, we then state certain standard results concerning free algebras and show why they hold in the specific case of Steiner quasigroups. Moreover, we clarify the relationship between free bases, sets of generators of minimal cardinality, and minimal sets of generators. We use these results to show that free Steiner quasigroups are determined up to isomorphism by the cardinality of a free base.
	
	\begin{df} Let $(M,\cdot)$ be a Steiner quasigroup. A subset $A\subseteq M$ is  \textbf{independent} if the generated substructure $\langle A\rangle_M$ is freely generated from $A$, that is, if $\langle A\rangle_M$ has UMP over $A$ for the class of Steiner quasigroups. 
		
		We say that $A\subseteq M$ is a \textbf{free base} from $M$  if it is independent and it generates $M$. 
	\end{df}
	When we say $\bar{a} = \langle a_i : i \in I \rangle$ is independent, we understand $a_i\neq a_j$ if  $i,j\in I$ and $i\neq j$.
	
	If $M$ is a Steiner quasigroup generated by the subset $A$ and $\bar{a}$ is an enumeration of $A$, then for every $b \in M$ there is a term $t(\bar{x})$ such that $b=t(\bar{a})$.
	
	\begin{rmk}\label{May11_1}  If $\bar{a},b$ is independent, then  $b\not\in\langle \bar{a}\rangle$.
	\end{rmk}
	\begin{proof} Assume  there is a term $t(\bar{x})$ such that $t(\bar{a})= b$. Let $N$ be a  Steiner quasigroup and let $b_1,b_2 \in N$ be distinct elements. Then $b_2\neq b_1=t(b_1,\ldots,b_1)$. The mapping sending all elements of $\bar{a}$ to $b_1$  and  $b$ to $b_2$ cannot be extended to an homomorphism.
	\end{proof}

The next proposition is a general result that relates the cardinalities of free bases and generating sets in free algebras.  We state the result for Steiner quasigroups.

\begin{prop}[Fujiwara, Jónsson-Tarski] \label{May11_2} If $A$ is a free base of the Steiner quasigroup $M$ and $B\subseteq M$ is a set of generators of $M$, then  $|B|\geq |A|$.
	\end{prop}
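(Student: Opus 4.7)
The plan is to compare the number of homomorphisms $M \to N$ as $N$ ranges over suitably chosen Steiner quasigroups. Since $A$ is a free base, the UMP sets up a bijection between functions $A \to N$ and homomorphisms $M \to N$, so there are exactly $|N|^{|A|}$ of the latter. Since $B$ generates $M$, every homomorphism $M \to N$ is determined by its restriction to $B$, so there are at most $|N|^{|B|}$. This yields $|N|^{|A|}\leq |N|^{|B|}$ for every Steiner quasigroup $N$ with $|N|\geq 2$.

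For $A$ finite, I would apply this with $N$ the three-element Steiner quasigroup on $\{0,1,2\}$ whose unique block is $\{0,1,2\}$: then $3^{|A|}\leq 3^{|B|}$ gives $|A|\leq |B|$ directly when $|B|$ is finite, and trivially when $|B|$ is infinite. For $A$ infinite, I would combine two tools. Each $a\in A$ has a representation $a=t_a(\bar b_a)$ with $t_a$ a term and $\bar b_a$ a finite tuple from $B$; choosing one such representation per $a$, the map $a\mapsto (t_a,\bar b_a)$ is injective because distinct elements of $A$ produce distinct values. Since there are $\aleph_0$ terms in the signature, this gives $|A|\leq \aleph_0\cdot |B|^{<\omega}$, which equals $|B|$ when $B$ is infinite. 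When $B$ is finite one only recovers $|A|\leq \aleph_0$, and to rule out the subcase $|A|=\aleph_0$ with $|B|$ finite, I would revisit the counting inequality with $N$ a countably infinite Steiner quasigroup (for instance, a free Steiner quasigroup on countably many generators): then $\aleph_0^{|A|}=2^{\aleph_0}$ while $\aleph_0^{|B|}=\aleph_0$, a contradiction.

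The main obstacle is little more than having the right test algebras on hand. The three-element Steiner quasigroup is elementary, and the existence of countably infinite Steiner quasigroups follows from the general existence of free algebras in a variety that is invoked in the introduction. Once both are available, the proof reduces to cardinal arithmetic in each case.
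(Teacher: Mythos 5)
Your argument is correct and complete, and it takes a different route from the paper only in the sense that the paper does not actually write out a proof: it splits into the cases $|A|$ infinite and $|A|$ finite and cites Theorem~I of Fujiwara and Theorem~1 of J\'onsson--Tarski respectively, noting merely that the latter applies because there exists a finite Steiner quasigroup of cardinality greater than one. What you have produced is in effect a self-contained reconstruction of the content of those two citations. Your homomorphism-counting inequality $|N|^{|A|}\leq |N|^{|B|}$, instantiated at the three-element Steiner quasigroup, is exactly the J\'onsson--Tarski mechanism for finite $A$ (their hypothesis about a finite algebra of size greater than one is precisely what licenses the choice of such an $N$), and your term-counting bound $|A|\leq \aleph_0\cdot |B|^{<\omega}$ is the Fujiwara-style argument for infinite $A$. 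The one genuinely additional step you supply --- eliminating the subcase $|A|=\aleph_0$ with $B$ finite by testing against a countably infinite $N$ and using $\aleph_0^{\aleph_0}=2^{\aleph_0}>\aleph_0$ --- is needed and handled correctly; a countably infinite Steiner quasigroup is available as the free one on $\aleph_0$ generators, whose existence the paper already invokes. The only loose end, and it is harmless, is that you should observe $B\neq\emptyset$: this holds because $M\supseteq A$ is nonempty while the signature has no constants, so the empty set generates only the empty substructure. What your approach buys is independence from the two external references; what the paper's buys is brevity.
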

	\begin{proof} 
	 If $|A|=\kappa\geq \omega$, the result follows from Theorem~I in \cite{fujiwara}. If $|A|$ is finite, the result follows from the fact that there is a finite Steiner quasigroup of cardinality $>1$ and Theorem~1 in \cite{jontarski}. \end{proof}	
	
We now show that the UMP for the class of all Steiner quasigroups is equivalent to the UMP for the class of all finite Steiner quasigroups. This result hinges on the fact that a finite partial Steiner triple system can always be embedded in a finite STS. There are several ways to construct such a finite extension - see, for example,  \cite{andersenetal}, \cite{lindner} and~\cite{treash}. As a consequence, any finite subset of a Steiner quasigroup can be embedded in a finite Steiner quasigroup.

	\begin{lemma} \label{May11_4} If an equation  $t(\bar{x})=r(\bar{x})$ holds in all finite Steiner quasigroups, then it holds in all Steiner quasigroups.
	\end{lemma}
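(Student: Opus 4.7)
The plan is to reduce the evaluation of $t$ and $r$ on an arbitrary tuple in an arbitrary Steiner quasigroup to a computation inside a finite Steiner quasigroup, exploiting the embedding fact recalled immediately before the lemma (every finite partial STS embeds into a finite STS).

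First, fix a Steiner quasigroup $M$ and a tuple $\bar{a}$ from $M$; I want to show $t(\bar{a})=r(\bar{a})$ in $M$. Let $A\subseteq M$ be the (finite) set consisting of the entries of $\bar{a}$ together with the value in $M$ of every subterm of $t(\bar{a})$ and of $r(\bar{a})$. Equipped with the ternary relation inherited from $M$ (that is, $R(x,y,z)$ iff $x\cdot y=z$ in $M$, for $x\neq y$), the set $A$ is a finite partial Steiner triple system.

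Next, by the embedding result cited before the lemma (Andersen--Hilton, Lindner, Treash), $A$ embeds as a partial STS into a finite STS, which we view as a finite Steiner quasigroup $N$ via an embedding $i\colon A\hookrightarrow N$. The key point is that, because $A$ contains the value in $M$ of every subterm of $t(\bar{a})$ and $r(\bar{a})$, each such subterm evaluation in $M$ is recorded as a defined operation in the partial STS $A$, hence preserved by $i$. An easy induction on term complexity then gives $i(t(\bar{a}))=t(i(\bar{a}))$ and $i(r(\bar{a}))=r(i(\bar{a}))$ in $N$.

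Finally, applying the hypothesis in the finite Steiner quasigroup $N$ yields $t(i(\bar{a}))=r(i(\bar{a}))$, so $i(t(\bar{a}))=i(r(\bar{a}))$, and injectivity of $i$ forces $t(\bar{a})=r(\bar{a})$ in $M$. The only step that requires any care is the first: one must define $A$ so that it is closed under all subterm evaluations appearing in $t(\bar{a})$ and $r(\bar{a})$, so that the partial-STS embedding actually transports these two values faithfully into $N$. Everything else is routine.
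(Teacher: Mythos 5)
Your proposal is correct and follows essentially the same route as the paper: both isolate the finite partial Steiner triple system consisting of all subterm values of $t(\bar{a})$ and $r(\bar{a})$, invoke the cited embedding of a finite partial STS into a finite STS, and observe that closure under subterm evaluation makes the embedding preserve the computation of $t$ and $r$. The only difference is presentational (you argue directly, the paper argues contrapositively by producing a finite counterexample), which does not change the substance.
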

	\begin{proof} Assume $M$ is a  Steiner quasigroup, $\bar{a}\in M$ is a finite tuple and  $t,r$ are terms such that  $t(\bar{a})\neq r(\bar{a})$.  Let   $M_0\subseteq M$ be a partial finite Steiner triple system  such that $s(\bar{a})\in M_0$ for every subterm $s$ of $t$ or $r$. In particular, $t(\bar{a}),r(\bar{a})\in M_0$. Then $M_0$ can be extended to a finite quasigroup $M_1$.  Note that for every subterm $s$ of $t$ or $r$, $s(\bar{a})$ computed in $M$ or in $M_1$ is the same. Then the equation $t(\bar{a}) = r(\bar{a})$  does not hold in $M_1$.
	\end{proof}
	
	\begin{lemma}\label{May11_5} Let $M,N$ be Steiner quasigroups, let $A\subseteq M$ be a set of generators and let $f:A\rightarrow N$ be a mapping. It is possible to extend $f$ to some homomorphism $f^\prime: M\rightarrow N$  if and only if for every equation $t(\bar{x})= r(\bar{x})$, for every tuple $\bar{a}$ of different elements of $A$ such that $M\models t(\bar{a})= r(\bar{a})$ we have that $N\models t(f(\bar{a}))= r(f(\bar{a}))$.
	\end{lemma}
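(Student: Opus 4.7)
The plan is to prove the forward implication by a direct computation and the reverse implication by defining $f'$ on all of $M$ via term representations over $A$. For the forward direction, if $f'$ extends $f$ and is a homomorphism, then for any term $t$ and any tuple $\bar{a}$ from $A$ one has $f'(t(\bar{a}))=t(f'(\bar{a}))=t(f(\bar{a}))$, so $M\models t(\bar{a})=r(\bar{a})$ immediately yields $N\models t(f(\bar{a}))=r(f(\bar{a}))$, with no distinctness restriction needed.

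For the reverse direction, I would use that $A$ generates $M$ to represent every $b\in M$ as $b=t(\bar{a})$ for some term $t$ and some tuple $\bar{a}$ of pairwise distinct elements of $A$ (starting from any representation, merging repeated entries and substituting correspondingly in the term), and then set $f'(b):=t(f(\bar{a}))$. The hard part — and the main obstacle — is well-definedness: suppose $b=t(\bar{a})=r(\bar{c})$ with both $\bar{a}$ and $\bar{c}$ having pairwise distinct entries. I would combine $\bar{a}$ and $\bar{c}$ into a single tuple $\bar{d}$ of pairwise distinct elements of $A$ containing all of them, and rewrite $t,r$ as $\tilde{t},\tilde{r}$ in the variables indexing $\bar{d}$ (formally a variable substitution). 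Since $M\models\tilde{t}(\bar{d})=\tilde{r}(\bar{d})$, the hypothesis — now legitimately applicable because $\bar{d}$ consists of distinct elements — gives $N\models\tilde{t}(f(\bar{d}))=\tilde{r}(f(\bar{d}))$, hence $t(f(\bar{a}))=r(f(\bar{c}))$.

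Once $f'$ is well-defined, it extends $f$ by taking $t$ to be a single variable symbol. A short additional observation — that $f'(s(\bar{e}))=s(f(\bar{e}))$ for \emph{any} representation $b=s(\bar{e})$, not just those with distinct entries — follows from well-definedness by merging repeated entries of $\bar{e}$ into a distinct tuple. With this in hand the homomorphism property is immediate: given $b_1=t(\bar{a})$ and $b_2=r(\bar{c})$, the element $b_1\cdot b_2$ is represented by the compound term $t\cdot r$ applied to the concatenation of $\bar{a}$ and $\bar{c}$ (which may have repetitions), yielding $f'(b_1\cdot b_2)=t(f(\bar{a}))\cdot r(f(\bar{c}))=f'(b_1)\cdot f'(b_2)$.

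The crucial technical point throughout is the reduction from representations on arbitrary tuples from $A$ to representations on pairwise distinct tuples, so that the hypothesis (which is phrased only for distinct tuples) can be invoked; the merging of $\bar{a}$ and $\bar{c}$ into a common tuple $\bar{d}$ is the decisive move.
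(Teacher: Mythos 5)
Your proposal is correct and follows essentially the same route as the paper, which simply defines the extension by $t(\bar{a})\mapsto t(f(\bar{a}))$ and notes that the hypothesis yields well-definedness. Your additional care in reducing arbitrary tuples to pairwise distinct ones (merging $\bar{a}$ and $\bar{c}$ into a common distinct tuple $\bar{d}$) is exactly the detail the paper leaves implicit, and it is handled correctly.
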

	\begin{proof} The extension is defined by  $t(\bar{a})\mapsto t(f(\bar{a}))$ and the assumption can be used to check that this is well defined.
	\end{proof}
	
	\begin{lemma}\label{May11_6} If a Steiner quasigroup $M$ is free over $A\subseteq M$ with respect to the class of all finite Steiner quasigroups, then it is free over $A$ with respect to all Steiner quasigroups.
	\end{lemma}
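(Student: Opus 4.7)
The plan is to reduce the statement to the equation-level criterion of Lemma~\ref{May11_5} and then use the finite embedding principle from Lemma~\ref{May11_4} (and its underlying fact, that every finite partial STS embeds in a finite Steiner quasigroup) to transport the extension problem from an arbitrary target to a finite one, where the hypothesis applies.

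Concretely, I would fix an arbitrary Steiner quasigroup $N$ and a map $f:A\to N$, and aim to extend $f$ to a homomorphism $M\to N$. By Lemma~\ref{May11_5}, it suffices to prove the following compatibility condition: for every pair of terms $t(\bar{x}),r(\bar{x})$ and every tuple $\bar{a}$ of distinct elements of $A$ with $M\models t(\bar{a})=r(\bar{a})$, one has $N\models t(f(\bar{a}))=r(f(\bar{a}))$. So I would fix such $t,r,\bar{a}$ and argue by contradiction, assuming $t(f(\bar{a}))\neq r(f(\bar{a}))$ in $N$.

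The next step mirrors the proof of Lemma~\ref{May11_4}: let $N_0\subseteq N$ be the finite partial Steiner triple system consisting of the values $s(f(\bar{a}))$ as $s$ ranges over all subterms of $t$ and $r$, and extend $N_0$ to a finite Steiner quasigroup $N_1$ using the embedding results cited from \cite{andersenetal,lindner,treash}. Since subterm values agree when computed in $N$ and in $N_1$, we still have $N_1\models t(f(\bar{a}))\neq r(f(\bar{a}))$. Now define a map $g:A\to N_1$ by $g(a_i)=f(a_i)$ on the entries of $\bar{a}$ and by sending every other element of $A$ to an arbitrary fixed point of $N_1$. By the finite-UMP hypothesis on $M$, the map $g$ extends to a homomorphism $g':M\to N_1$. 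Applying $g'$ to the equality $t(\bar{a})=r(\bar{a})$ gives $t(g(\bar{a}))=r(g(\bar{a}))$ in $N_1$, that is, $t(f(\bar{a}))=r(f(\bar{a}))$ in $N_1$, contradicting the previous line.

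The only point that needs mild care is the passage from subterm values in $N$ to subterm values in $N_1$: one has to select $N_0$ to be closed under subterm evaluation of $t$ and $r$ on $f(\bar{a})$ so that the embedding $N_0\hookrightarrow N_1$ preserves the relevant computations. This is exactly the device used in Lemma~\ref{May11_4}, so no new obstacle arises, and the proof is essentially a recycling of that argument together with an application of the hypothesis to a suitable map into the finite Steiner quasigroup $N_1$.
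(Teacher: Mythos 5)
Your proposal is correct and follows essentially the same route as the paper: reduce to the compatibility criterion of Lemma~\ref{May11_5}, transfer the failing equation to a finite Steiner quasigroup via the embedding device of Lemma~\ref{May11_4}, and apply the finite-UMP hypothesis to derive a contradiction. The only (cosmetic) difference is that you inline the construction behind Lemma~\ref{May11_4} and keep $g(\bar{a})=f(\bar{a})$, whereas the paper cites that lemma to produce a witness tuple $\bar{c}$ in a finite quasigroup and sends $\bar{a}$ to $\bar{c}$.
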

	\begin{proof} We assume $A$ is a set of generators of $M$ and for every finite Steiner quasigroup $N$, every mapping $f:A\rightarrow N$  can be extended to some homomorphism $f^\prime: M\rightarrow N$. We check that $A$ is a free base of $M$. Let $N$ be a Steiner quasigroup and let $f:A\rightarrow N$ be a mapping.  We use  Lemma~\ref{May11_5} to show that we can extend $f$ to a homomorphism. If the extension does not exist, there is an equation $t(\bar{x})= r(\bar{x})$ and a tuple $\bar{a}$ of distinct elements of $A$ such that $M\models t(\bar{a})= r(\bar{a})$ but $N\models t(f(\bar{a}))\neq r(f(\bar{a}))$. By Lemma~\ref{May11_4}, there is a finite Steiner quasigroup $N_0$ and a tuple $\bar{c}\in N_0$ such that  $N_0\models t(\bar{c})\neq r(\bar{c})$.  We can define a mapping $g:A\rightarrow N_0$  such that $g(\bar{a})= \bar{c}$. By assumption, $g$ can be extended to some homomorphism $g^\prime:M\rightarrow N_0$, which contradicts Lemma~\ref{May11_4}.
	\end{proof}
	
	\begin{prop}[Jónsson-Tarski]\label{May11_7} Assume $A$ is a finite free base of the Steiner quasigroup $M$. If $B\subseteq M$ is a set of generators  and $|A|=|B|$, then $B$ is a free base of $M$.
	\end{prop}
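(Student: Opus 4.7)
The plan is to reduce to the finite case via Lemma~\ref{May11_6} and then use a clean cardinality argument on $\mathrm{Hom}(M,N)$ for $N$ finite.

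First I would invoke Lemma~\ref{May11_6}: it suffices to show that for every \emph{finite} Steiner quasigroup $N$ and every map $f : B \to N$, $f$ extends to a homomorphism $M \to N$. Fix such an $N$ and let $n = |A| = |B|$.

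The key observation is that, because $A$ is a free base of $M$ with $|A| = n$ finite, the restriction map $\mathrm{Hom}(M,N) \to N^A$ is a bijection (surjective by UMP of $A$; injective because two homomorphisms agreeing on a set of generators are equal). In particular $|\mathrm{Hom}(M,N)| = |N|^n$, which is finite. Now consider the analogous restriction map
\[
\rho : \mathrm{Hom}(M,N) \longrightarrow N^B, \qquad h \mapsto h{\restriction}_B.
\]
Since $B$ generates $M$, $\rho$ is injective. Its source and target are finite sets of the same cardinality $|N|^n$, so $\rho$ is a bijection, hence \emph{surjective}: every map $f : B \to N$ is the restriction of some homomorphism $M \to N$. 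Applying Lemma~\ref{May11_6} gives that $B$ is a free base of $M$.

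There is really no obstacle, provided one is willing to pass to finite $N$ via Lemma~\ref{May11_6}: the whole proof is the pigeon-hole observation that an injection between two finite sets of equal cardinality is bijective. The two hypotheses (finiteness of $A$ and that $B$ generates $M$ with $|B| = |A|$) enter precisely to make the counts $|N^A| = |N^B| = |N|^n$ finite and equal. The argument would fail without reducing to finite $N$, since otherwise the cardinalities $|N^A|$ and $|N^B|$ could be equal infinite cardinals without an injection between sets of that size having to be surjective --- this is exactly why Lemma~\ref{May11_6} (and thus the existence of finite STS extensions) is essential here.
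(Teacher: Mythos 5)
Your proof is correct. It follows the same skeleton as the paper --- both arguments reduce to the class of finite Steiner quasigroups via Lemma~\ref{May11_6} --- but where the paper then simply cites Theorem~2 of J\'onsson--Tarski, you supply the finite-case argument yourself: the restriction map $\mathrm{Hom}(M,N)\to N^A$ is a bijection (surjective by the UMP of $A$, injective because $A$ generates $M$), the restriction map $\mathrm{Hom}(M,N)\to N^B$ is injective because $B$ generates $M$, and since $|N^A|=|N^B|=|N|^{|A|}$ is finite, the pigeonhole principle forces the latter map to be surjective, which is exactly the hypothesis Lemma~\ref{May11_6} needs for $B$. Each step checks out, including the final application of Lemma~\ref{May11_6} (whose hypothesis is precisely that $B$ generates $M$ and that every map from $B$ into a finite Steiner quasigroup extends). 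This counting argument is in substance the standard proof of the cited J\'onsson--Tarski result, so what your version buys is self-containedness; your closing observation correctly pinpoints why the reduction to finite target algebras --- and hence the embeddability of finite partial Steiner triple systems into finite ones --- is the load-bearing ingredient.
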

	\begin{proof} By Lemma~\ref{May11_6} and Theorem~2 in \cite{jontarski}. 
	\end{proof}
	
	\begin{cor}\label{May11_8}  Assume $A$ is a finite free base of the Steiner quasigroup $M$ and  $B\subseteq M$ is a set of generators. Then $|A|=|B|$ if and only if $B$ is a free base of $M$.
	\end{cor}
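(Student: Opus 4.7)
The plan is to derive both directions of the corollary directly from the two propositions that have just been proved, namely Proposition~\ref{May11_2} (comparing cardinalities of free bases with generating sets) and Proposition~\ref{May11_7} (in a finitely freely generated Steiner quasigroup, any generating set of the same finite cardinality is itself a free base).

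For the implication $|A| = |B| \Rightarrow B$ is a free base: since $A$ is finite and $|B| = |A|$, the set $B$ is a finite generating set of $M$ with the same cardinality as the free base $A$, so Proposition~\ref{May11_7} applies verbatim and yields that $B$ is a free base of $M$.

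For the converse $B$ is a free base $\Rightarrow |A| = |B|$: I would apply Proposition~\ref{May11_2} twice, symmetrically. First, using that $A$ is a free base and $B$ is a set of generators, I get $|B| \geq |A|$. Then, using that $B$ is a free base (by hypothesis) and $A$ is a set of generators of $M$, I get $|A| \geq |B|$. Combining, $|A| = |B|$.

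I do not anticipate any real obstacle; the statement is essentially a corollary of the preceding two propositions, and the only thing to be a little careful about is that Proposition~\ref{May11_7} requires the generating set to match the free base in finite cardinality, which is automatic here because $|A|$ is assumed finite and $|B| = |A|$.
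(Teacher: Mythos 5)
Your proposal is correct and matches the paper's proof, which simply cites Proposition~\ref{May11_7} for one direction and Proposition~\ref{May11_2} for the other; your symmetric double application of Proposition~\ref{May11_2} is the natural way to unpack the cited argument. No issues.
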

	\begin{proof} This follows from Proposition~\ref{May11_7} and Proposition~\ref{May11_2}.
	\end{proof}
	
	\begin{cor}\label{May11_9} Let $M$ be a finitely generated free Steiner quasigroup. If $A \subseteq M$ is a set of generators of minimal cardinality, then $A$ is a free base of $M$. 
	\end{cor}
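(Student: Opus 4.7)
The plan is to combine Proposition~\ref{May11_2} with Corollary~\ref{May11_8} in a very short argument: show that a generating set of minimal cardinality has the same cardinality as the distinguished free base, and then invoke Corollary~\ref{May11_8} to upgrade it to a free base.

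In more detail, since $M$ is a finitely generated free Steiner quasigroup, by definition there exists some free base $B_0 \subseteq M$. Because $M$ is finitely generated, some finite $C \subseteq M$ generates $M$, and by Proposition~\ref{May11_2} applied to $B_0$ and $C$ we get $|B_0| \leq |C| < \omega$, so $B_0$ is finite. Let $n = |B_0|$.

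Now let $A \subseteq M$ be a generating set of minimal cardinality. Since $B_0$ is itself a generating set, $|A| \leq |B_0| = n$. On the other hand, Proposition~\ref{May11_2} applied to the free base $B_0$ and the generating set $A$ gives $n = |B_0| \leq |A|$. Hence $|A| = |B_0| = n$, and in particular $A$ is finite. We are now in the situation of Corollary~\ref{May11_8}: $B_0$ is a finite free base of $M$, $A$ is a set of generators of $M$, and $|A| = |B_0|$. Therefore $A$ is a free base of $M$.

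There is no real obstacle here; the whole content is packaged in Proposition~\ref{May11_2} and Corollary~\ref{May11_8}. The only point one has to be a little careful about is to record explicitly that a finitely generated free Steiner quasigroup admits a \emph{finite} free base, so that the finiteness hypothesis of Corollary~\ref{May11_8} applies.
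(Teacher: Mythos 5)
Your proof is correct and follows essentially the same route as the paper: both arguments show $|A|$ equals the cardinality of a (necessarily finite) free base by combining minimality with Proposition~\ref{May11_2}, and then upgrade $A$ to a free base via Proposition~\ref{May11_7} (which you invoke through its packaging as Corollary~\ref{May11_8}). Your extra care in verifying that the free base is finite is a reasonable explicit addition, but the substance is identical.
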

	\begin{proof} Since $A$ has minimal cardinality, $|A|$ is finite, so Proposition~\ref{May11_2} gives $|A| \geq |B|$, where $B$ is a base of $M$. It follows that $|A|=|B|$ by minimality of $|A|$. By Proposition~\ref{May11_7}, $A$ is a free base.
	\end{proof}
	
	\begin{cor}\label{May11_9.1} Let $M,N$ be Steiner quasigroups freely generated over $A\subseteq M$ and $B\subseteq N$ respectively. Then $M\cong N$ if and only if $|A|=|B|$. In fact, every bijection between $A$ and $B$ can be extended  to an isomorphism between $M$ and $N$.
	\end{cor}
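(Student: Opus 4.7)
The plan is to run the standard ``uniqueness up to unique isomorphism'' argument for universal objects, and to handle the cardinality statement by bouncing Proposition~\ref{May11_2} off both sides.

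For the direction $M\cong N \Rightarrow |A|=|B|$, I would take any isomorphism $\varphi:M\rightarrow N$ and observe that $\varphi(A)$ is a set of generators of $N$ (the image of a generating set under a surjective homomorphism). Applying Proposition~\ref{May11_2} in $N$, with free base $B$ and generating set $\varphi(A)$, yields $|A|=|\varphi(A)|\geq |B|$. Applying the same proposition in $M$, with free base $A$ and generating set $\varphi^{-1}(B)$, gives $|B|\geq |A|$. Hence $|A|=|B|$.

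For the direction $|A|=|B| \Rightarrow M\cong N$, and in fact for the stronger ``in fact'' clause, I would take an arbitrary bijection $f:A\rightarrow B$. Using the UMP of $M$ over $A$ (with respect to $N$), extend $f$ to a homomorphism $\hat f:M\rightarrow N$; using the UMP of $N$ over $B$ (with respect to $M$), extend $f^{-1}$ to a homomorphism $\hat g:N\rightarrow M$. The composition $\hat g\circ \hat f:M\rightarrow M$ is a homomorphism that restricts to the identity on $A$, and so is the identity map $\mathrm{id}_M$; by the uniqueness clause of the UMP (noted in the paragraph defining UMP), these two extensions of $\mathrm{id}_A$ must agree, so $\hat g\circ \hat f=\mathrm{id}_M$. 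Symmetrically $\hat f\circ \hat g=\mathrm{id}_N$. Therefore $\hat f$ is an isomorphism extending $f$.

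I do not expect a real obstacle: this is the usual proof that free objects are unique up to unique isomorphism, and both ingredients (existence of the extensions from UMP, the cardinality inequality from Proposition~\ref{May11_2}) are already in place. The only minor point to flag is that Proposition~\ref{May11_2} is used twice, once in each direction, which requires noticing that bijective homomorphisms carry generating sets to generating sets.
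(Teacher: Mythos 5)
Your proposal is correct and follows essentially the same route as the paper: the backward direction (and the ``in fact'' clause) is the standard UMP back-and-forth with uniqueness of extensions, and the forward direction is two applications of Proposition~\ref{May11_2} (the paper phrases this by identifying $M$ with $N$ so that $A$ and $B$ become two free bases of one quasigroup, whereas you transport the generating sets across the isomorphism, which is the same argument).
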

	\begin{proof} The direction from right to left is a standard argument for free structures:  if  $f:A\rightarrow B$ is a bijection then it can be extended to some homomorphism $\hat{f}: M\rightarrow N$  and $f^{-1}$ can be extended to a homomorphism  $f^\prime: N\rightarrow M$.  Then $f^\prime\circ \hat{f}$  must be the identity on $M$  and it follows that $\hat{f}$ is an isomorphism.
		
		For the other direction, we assume $|A|<|B|$ and $M\cong N$ and we seek a contradiction. We may assume $M=N$. Hence $A$ and $B$ are free bases of $M$. This contradicts Proposition~\ref{May11_2}. 
	\end{proof}
	
	\begin{cor}\label{May11_9.2}
		If $M$ is a finitely generated free Steiner quasigroup and $f:M\rightarrow M$ is a surjective homomorphism, then $f$ is an automorphism of $M$.
	\end{cor}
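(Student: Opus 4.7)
The plan is to combine the rigidity results on free bases, namely Proposition~\ref{May11_2} and Corollary~\ref{May11_8}, with the uniqueness clause in the UMP. Fix a finite free base $A \subseteq M$ with $|A|=n$.

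First I would show that $f(A)$ is itself a free base of $M$. Since $A$ generates $M$ and $f$ is surjective, $f(A)$ generates $f(M)=M$, so $f(A)$ is a generating set of $M$. Proposition~\ref{May11_2} applied to the free base $A$ and the generating set $f(A)$ gives $|f(A)| \geq |A| = n$, while trivially $|f(A)| \leq |A| = n$. Hence the restriction of $f$ to $A$ is injective and $|f(A)|=n$, and Corollary~\ref{May11_8} then guarantees that $f(A)$ is a free base of $M$.

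Next I would invoke Corollary~\ref{May11_9.1}: since $A$ and $f(A)$ are both free bases of the same Steiner quasigroup $M$, the bijection $a \mapsto f(a)$ from $A$ to $f(A)$ extends to an isomorphism $g \colon M \to M$. But $f$ itself is a homomorphism $M \to M$ that agrees with $g$ on the free base $A$, so by the uniqueness of extensions in the UMP of $A$ in $M$ we conclude $f = g$. Thus $f$ is an automorphism. There is no serious obstacle in this argument; the only point requiring care is to apply the UMP uniqueness with respect to the original free base $A$ rather than with respect to $f(A)$, which works because $A$ was the free base we used to extend.
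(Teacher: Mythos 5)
Your proof is correct and follows essentially the same route as the paper: show $f(A)$ is a generating set of the same cardinality as the free base $A$ via Proposition~\ref{May11_2}, conclude it is a free base by Corollary~\ref{May11_8}, extend $f\restriction A$ to an automorphism by Corollary~\ref{May11_9.1}, and identify it with $f$ by uniqueness in the UMP. Your application of Proposition~\ref{May11_2} in the direction $|f(A)|\geq|A|$ (with $|f(A)|\leq|A|$ being the trivial inequality) is in fact the cleaner way to state the cardinality step.
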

	\begin{proof} Let $A$ be a base of $M$. Since $f(A)$ is a finite set of generators of $M$, Proposition~\ref{May11_2} implies $|f(A)|\leq |A|$  and therefore $|f(A)|=|A|$. By Corollary~\ref{May11_8}, $f(A)$ is a free base of $M$. By Corollary~\ref{May11_9.1}, $f\restriction A$ extends to some automorphims of $M$. Since $f\restriction A$ has a unique extension to a homomorphism from $M$ into $M$, the extension is $f$.
	\end{proof}

	\section{Hyperfree and free extensions}
Free Steiner quasigroups can be constructed in stages, and the construction gives rise to two natural notions of ordering which generalise to the non-free case.  These notions were first formulated in \cite{sieben} for projective planes and they have a natural analogue in the context of Steiner triple systems. In particular, the existence of an \textit{HF-ordering} for a partial Steiner triple system $A$ is equivalent to a condition on $A$ that can be expressed by a set of first-order formulas. 

Many of the definitions and results in this section  have essential applications for the axiomatisation of the first-order theory of free Steiner triple systems \cite{barcas3}. 
	
Any Steiner quasigroup can be thought of as the union of a chain of partial Steiner triple systems where the first element of the chain is a set of generators, and each subsequent element contains previously undefined products. The existence of a free generating set is equivalent to certain specific properties of this chain, described in Proposition~\ref{May12_2} below.

 Let $M$ be a Steiner quasigroup generated by $A\subseteq M$.  Then  $M=\bigcup_{n<\omega} S_n$ for a chain of sets $S_n$ such that  $S_0 =A$ and $S_{n+1}= \{a\cdot b\mid a,b\in S_n\}$. Note that $S_n\subseteq S_{n+1}$  since  $a\cdot a = a$.
 We will write $S_{<n}$ for $\bigcup_{i<n} S_i$. 
	 The sets $S_n$  are called the \textbf{levels} of $M$ over $A$. An element $a\in M$ has   \textbf{level~$n$ over $A$}  if $a\in S_n\smallsetminus S_{<n}$.

	\begin{prop}\label{May12_2} Let $M$ be a Steiner quasigroup and let $A\subseteq M$. The following are equivalent:
		\begin{enumerate}
			\item $A$ is a free base of $M$.
			\item $M$ is generated by $A$ with  levels $(S_n\mid n<\omega)$ such that $A=S_0$ and
			\begin{enumerate}
				\item if $a,b\in S_n\smallsetminus S_{<n}$ and $a\neq b$, then $a\cdot b\not\in S_n$.
				\item if $a\in S_{n+1}\smallsetminus S_n$, there is a unique pair $\{b,c\}\subseteq S_n$ such that $b\neq c$ and $a= b\cdot c$.
			\end{enumerate}
		\end{enumerate}
		Moreover, property (b) implies that if  $a\in S_{n+1}\smallsetminus S_n$, and $b, a\cdot b\in S_n$, then  $\{b,a\cdot b\}$ is the unique pair given by (b) for  $a$.
	\end{prop}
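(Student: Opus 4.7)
The plan is to prove the statements in the order: the moreover clause, then (2) $\Rightarrow$ (1), and finally (1) $\Rightarrow$ (2), where the last step uses an explicit free-completion construction together with Corollary~\ref{May11_9.1}.

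For the moreover clause, fix $a \in S_{n+1} \smallsetminus S_n$ with (b)-pair $\{c, d\}$ and suppose $b, a \cdot b \in S_n$. If $b \in \{c, d\}$, say $b = c$, then the Steiner identity $c \cdot (c \cdot d) = d$ forces $a \cdot b = d$, so $\{b, a \cdot b\} = \{c, d\}$. Otherwise, setting $e = a \cdot b$ yields $a = b \cdot e$ with $b, e \in S_n$ and $b \neq e$ (else $a = b \cdot b = b \in S_n$, contradicting $a \in S_{n+1} \smallsetminus S_n$); this is a second pair from $S_n$ with product $a$, contradicting the uniqueness in (b).

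For (2) $\Rightarrow$ (1), given a Steiner quasigroup $N$ and a map $f : A \to N$, I would define $\hat{f} : M \to N$ by induction on levels: $\hat{f} \restriction A = f$, and for $a \in S_{n+1} \smallsetminus S_n$, using the unique pair $\{b, c\} \subseteq S_n$ from (b), set $\hat{f}(a) = \hat{f}(b) \cdot \hat{f}(c)$. What needs work is $\hat{f}(x \cdot y) = \hat{f}(x) \cdot \hat{f}(y)$ for all $x \neq y$ in $M$. Let $n$ be least with $x, y \in S_n$. If both $x, y$ have level $n$, then (a) places $x \cdot y$ in $S_{n+1} \smallsetminus S_n$, and by (b) its unique pair is $\{x, y\}$, giving the identity by definition. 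Otherwise I may assume $x$ has level $n$ and $y \in S_{<n}$; (a) applied to $x$ and $x \cdot y$, combined with $x \cdot (x \cdot y) = y \in S_n$, rules out $x \cdot y \in S_n \smallsetminus S_{<n}$. If $x \cdot y \in S_{n+1} \smallsetminus S_n$ the identity again holds by definition, while if $x \cdot y \in S_{<n}$ the moreover clause identifies $\{y, x \cdot y\}$ as the (b)-pair of $x$, so $\hat{f}(x) = \hat{f}(y) \cdot \hat{f}(x \cdot y)$, and commutativity with the axiom $u \cdot (u \cdot v) = v$ in $N$ gives $\hat{f}(x) \cdot \hat{f}(y) = \hat{f}(x \cdot y)$.

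For (1) $\Rightarrow$ (2), I would construct an explicit Steiner quasigroup $F$ on a set $A'$ with $|A'| = |A|$ by the standard free-completion procedure: put $L_0 = A'$, and at stage $n+1$ adjoin fresh elements $[x, y]$ as the product of every pair of distinct $x, y \in L_n$ whose product is not yet defined, together with the equalities $[x,y] \cdot x = y$ and $[x,y] \cdot y = x$ needed to preserve the Steiner axioms. Properties (a) and (b) hold in $F$ by construction, so the direction already proved shows that $A'$ is a free base of $F$. Corollary~\ref{May11_9.1} then yields an isomorphism $\sigma : M \to F$ taking $A$ to $A'$; since the levels are defined purely in terms of the base and the operation, $\sigma$ maps $S_n^M$ onto $S_n^F$, and (a), (b) transfer back to $M$.

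The main obstacle is the homomorphism verification in (2) $\Rightarrow$ (1): the case analysis on the levels of $x, y, x \cdot y$ requires (a) to rule out the coincidence in which $x \cdot y$ has the same level as $x$, and the moreover clause is the crucial tool that reduces the remaining low-level subcase back to the definition of $\hat{f}$ at $x$. One also has to check that the closing Steiner-axiom computation in $N$ stays valid when the images $\hat{f}(y)$ and $\hat{f}(x \cdot y)$ happen to coincide.
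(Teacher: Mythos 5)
Your proof is correct and follows essentially the same route as the paper's: direction (2)~$\Rightarrow$~(1) by extending the map level by level using the unique pairs from (b), and direction (1)~$\Rightarrow$~(2) by building an explicit free completion with the required level structure and transferring it back via the uniqueness of free Steiner quasigroups (Corollary~\ref{May11_9.1}). The only difference is that you spell out the moreover clause and the case analysis in the homomorphism check in more detail than the paper does, which is a welcome elaboration rather than a new approach.
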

	\begin{proof} 2 $\Rightarrow$ 1.  Let $N$ be a Steiner quasigroup and let $f:A\rightarrow N$ be a mapping. We extend $f$ to a homomorphism $\hat{f}: M\rightarrow N$.  We obtain $\hat{f}$ as the union $\hat{f}= \bigcup_{n<\omega}f_n$  of a chain  of partial homomorphisms $f_n : S_n \to N$. This means that  $f_n(a\cdot b)= f_n(a)\cdot f_n(b)$  whenever $a,b$ and $a\cdot b$  belong to $\mathrm{dom} (f_n)=S_n$. We start with $f_0=f$.  Recall that $A=S_0$ and that $a\cdot b\not\in S_0$  if $a,b\in S_0$ and, therefore, $f_0$ is a partial homomorphism with domain $S_0$. Now we extend  $f_n:S_n\rightarrow N$  to $f_{n+1}$. 
	For $a \in S_{n+1}\smallsetminus S_n$, there is a unique pair $\{b,c\}\subseteq S_n$ of distinct elements $b,c$ with $a=b\cdot c$. We let  $f_{n+1}(a) = f_n(b)\cdot f_n(c)$.  By (a) and (b), the only defined products involving $a$ in $S_{n+1}$ are   $a=b\cdot c=c\cdot b$, $a\cdot a=a$, $a\cdot b = b\cdot a= c$ and $a\cdot c= c\cdot a =b$ and $f_{n+1}$ respects these products.
		
		1 $\Rightarrow$ 2.  It is easy to construct a Steiner quasigroup $M^\prime\supseteq A $ with the properties described in 2. It is enough to start with $S^\prime_0=A$ and inductively define $S^\prime_{n+1}$ by adding to $S^\prime_n$ a different element $a\cdot b$ for each pair $\{a,b\}$ of elements of $S^\prime_n$ whose product is not defined in $S^\prime_n$. The only  products involving  the new element $c=a\cdot b$ in $S^\prime_{n+1}$ are $c=a\cdot b=b\cdot a$, $c\cdot c=c$, $c\cdot a= a\cdot c=b$ and $c\cdot b=b\cdot c= a$. We have proven that $A$ is a free base of $M^\prime= \bigcup_{n<\omega}S^\prime_n$. The uniqueness of free  Steiner quasigroups over a given set of generators guarantees that $M$ and $M^\prime$ are isomorphic over $A$.  The isomorphism can be used to obtain the levels $S_n$ in $M$ with the required properties.
	\end{proof}
	
		\begin{df} Let $M$ be a Steiner quasigroup, and let  $A\subseteq M$ be a free base. The sequence $(S_n\mid n<\omega)$ of levels described in Proposition~\ref{May12_2} will be called a \textbf{standard free construction of $M$ over $A$}.
	\end{df}

A standard free construction can be thought of as a construction in stages and suggests the following notion of ordering.
 
	\begin{df}
		A \textbf{free ordering}  (or an \textbf{F-ordering})   of a Steiner quasigroup $M$ over $A\subseteq M$ is a linear ordering $<$ of $M$ such that for every $a\in M$  there is a unique pair $\{a_1,a_2\}\subseteq M$ with $a=a_1\cdot a_2$,  and for each $i$, either $a_i\in A$ or $a_i<a$. 
		
		Note that, when $a\in A$, the unique pair is given by $a_1=a_2=a$.
	\end{df}
	
	\begin{rmk}\label{May12_12.0} Let $M$ be a Steiner quasigroup with standard free construction $(S_n\mid n<\omega)$ over $A$. Let $<_n$ be a linear ordering of $S_n\smallsetminus S_{<n}$, and for $a,b\in M$ define
	$$a<b \Leftrightarrow \mbox{ for some } n, \ b\in S_n\smallsetminus S_{<n} \mbox{ and either } a\in S_{<n} \mbox{ or } a\in S_n\smallsetminus S_{<n} \mbox{ and } a<_n b.  $$ 
		Then $<$ is an F-ordering of $M$ over $A=S_0$, and $S_0$ is an initial segment. If every $<_n$ is a well-order, then $<$ is a well-order.
		\end{rmk}
		
	\begin{rmk}\label{May12_12.1}
		 Let $M$ be a Steiner quasigroup. If there is an F-ordering of $M$ over $A$, there is another one where $A$ is an initial segment. If the original order is a well-order, the new one is also a well-order and, moreover, $M=\langle A\rangle$.
	\end{rmk}	
	
	\begin{df} A \textbf{hyperfree ordering}  (or \textbf{HF-ordering}) of a partial Steiner triple system $A$ is a linear ordering $<$ of $A$ such that for every $a\in A$ there is at most one pair of smaller elements $\{a_1,a_2\}\subseteq A$ such that $a=a_1\cdot a_2$, that is, there is at most one block $\{a,a_1,a_2\}$ with $a_1,a_2<a$.
	\end{df}
The next proposition characterises free bases in terms of the existence of specific F-orderings and HF-orderings. Recall that a well-founded linear order is a well-order.
	\begin{prop}\label{May12_2.2} Assume $M$ is a Steiner quasigroup and $A\subseteq M$. The following are equivalent:
		\begin{enumerate}
			\item $A$ is a free base of $M$.
			\item There is an F-ordering of $M$ over $A$ which is a well-order.
			\item There is an HF-ordering of $M$ which is a well-order and $A$ is the set of points that are not a product of smaller elements.
		\end{enumerate}
		\end{prop}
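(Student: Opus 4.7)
The plan is to prove the cycle $1 \Rightarrow 2 \Rightarrow 3 \Rightarrow 1$.

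Implication $1 \Rightarrow 2$ is essentially packaged by earlier results: Proposition~\ref{May12_2} yields a standard free construction $(S_n \mid n < \omega)$ of $M$ over $A$, and Remark~\ref{May12_12.0} then produces a well-order F-ordering by well-ordering each level difference $S_n \smallsetminus S_{<n}$ separately and concatenating.

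For $2 \Rightarrow 3$ I would first apply Remark~\ref{May12_12.1} to replace the given F-ordering with one (still a well-order) in which $A$ is an initial segment; call it $<$. For $a \notin A$ the unique pair $\{a_1,a_2\}$ provided by the F-ordering must lie strictly below $a$ (since $A$ is initial), so $a$ is a product of two smaller distinct elements, and this is the only such pair. For $a \in A$, the only F-pair is $\{a,a\}$; any pair of strictly smaller distinct elements with product $a$ would violate F-uniqueness. Hence $<$ is an HF-ordering and $A$ is exactly the set of points not a product of smaller elements.

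The substantive direction is $3 \Rightarrow 1$. Given an HF-well-ordering $<$ with $A$ as described, I would verify that $A$ is a free base by checking generation and the UMP separately. Generation is a routine transfinite induction: the least element is in $A$, and any $a \notin A$ satisfies $a = b \cdot c$ with $b, c < a$, placing $a \in \langle A \rangle$ by the inductive hypothesis. For the UMP, fix a Steiner quasigroup $N$ and a map $f : A \to N$, and define $\hat f : M \to N$ by transfinite recursion along $<$: set $\hat f(a) = f(a)$ for $a \in A$, and for $a \notin A$ put $\hat f(a) = \hat f(b) \cdot \hat f(c)$, where $\{b,c\}$ is the unique pair of $<$-smaller elements with $a = b \cdot c$. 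One must then check $\hat f(x \cdot y) = \hat f(x) \cdot \hat f(y)$ for all $x, y \in M$. Idempotency disposes of $x = y$; for $x \neq y$, take $x < y$ and set $z = x \cdot y$. The Steiner quasigroup axioms force $z \notin \{x,y\}$, giving two cases. If $y < z$, then $z \notin A$ (it is a product of two smaller elements), and HF-uniqueness identifies $\{x,y\}$ as the defining pair of $z$, yielding $\hat f(z) = \hat f(x) \cdot \hat f(y)$ directly. If $z < y$, then $y = x \cdot z$ using $x \cdot (x \cdot y) = y$, so $y \notin A$ and HF-uniqueness identifies $\{x,z\}$ as the defining pair of $y$, hence $\hat f(y) = \hat f(x) \cdot \hat f(z)$; the quasigroup identity in $N$ then gives $\hat f(x) \cdot \hat f(y) = \hat f(x) \cdot (\hat f(x) \cdot \hat f(z)) = \hat f(z) = \hat f(x \cdot y)$.

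The main obstacle is this case analysis in $3 \Rightarrow 1$: the proof rests entirely on pairing HF-uniqueness with the identities $u \cdot u = u$ and $u \cdot (u \cdot v) = v$, which together prevent any spurious third value for $x \cdot y$ from interfering with the inductive definition.
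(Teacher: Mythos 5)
Your proof is correct and follows essentially the same route as the paper: the cycle $1 \Rightarrow 2 \Rightarrow 3 \Rightarrow 1$, with $1 \Rightarrow 2$ via Proposition~\ref{May12_2} and Remark~\ref{May12_12.0}, $2 \Rightarrow 3$ via Remark~\ref{May12_12.1}, and $3 \Rightarrow 1$ by transfinite recursion along the well-order. The only difference is that you spell out the case analysis (using $u\cdot u = u$ and $u\cdot(u\cdot v)=v$ together with HF-uniqueness) verifying that the recursively defined map is a homomorphism, a step the paper dismisses as easy.
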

		\begin{proof}
			1 $\Rightarrow$ 2. By Proposition~\ref{May12_2}, there is standard free construction $(S_n\mid n<\omega)$ of $M$ over $A$. Choose a well-order $<_n$ of every set $S_n\smallsetminus S_{<n}$. By Remark~\ref{May12_12.0}, there is an F-ordering $<$ of $M$ over $A$ which is a well-order extending all orders $<_n$.
			
			2 $\Rightarrow$ 3. By Remark~\ref{May12_12.1} there is an F-ordering $<$ of $M$ over $A$ which is a well-order and $A$ is an initial segment. We know that no element of $A$ is a product of smaller elements. If $a\in M\smallsetminus A$, then $a= a_1\cdot a_2$ where each $a_i$ is either smaller than $a$ or an element of $A$. Since $A$ is an initial segment, $a_1,a_2 <a$.
			
			3 $\Rightarrow$ 1.  Let $<$ be a well-order and an HF-ordering of $M$ and assume $A$ is the set of elements which are not a product of smaller elements. As in Remark~\ref{May12_12.1}, we can modify the order and make $A$ an initial segment. Hence, every element of $M\smallsetminus A$ is a product of smaller elements. By induction on the well-order it is easy to see that $M=\langle A\rangle$. Note that no element of $A$ is a product of two different elements of $A$. Let $N$ be a Steiner quasigroup and $f:A\rightarrow N$ a  mapping. By recursion on the well-order, it is easy to extend $f$ to some homomorphism $f^\prime:M\rightarrow N$.
		\end{proof}

	\begin{prop} \label{freesubs} 
	\begin{enumerate}	
		\item A substructure $M$ of a free Steiner quasigroup $N$ is free. 
		\item If $A$ is a finite subset of the free Steiner quasigroup $M$, then $A$ is contained in a finitely generated free quasigroup.
		\end{enumerate}
	\end{prop}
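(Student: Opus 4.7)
The plan is to derive both parts from Proposition~\ref{May12_2.2}, which characterises freeness through HF-orderings that are well-orders.

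For part 1, since $N$ is free, Proposition~\ref{May12_2.2} supplies an HF-ordering $<$ of $N$ that is a well-order. I would restrict $<$ to $M$, obtaining a linear well-ordering $<'$ of $M$. The key observation is that the HF-property survives restriction: for any $a\in M$, any pair $\{a_1,a_2\}\subseteq M$ of elements $<'$-smaller than $a$ with $a=a_1\cdot a_2$ is equally a pair of $<$-smaller elements of $N$ with the same product, so the uniqueness of such a pair in $N$ forces uniqueness in $M$. Hence $<'$ is an HF-ordering of $M$ that is a well-order. Letting $A$ be the set of elements of $M$ that are not a product of $<'$-smaller elements of $M$, the implication $3\Rightarrow 1$ of Proposition~\ref{May12_2.2} yields that $A$ is a free base of $M$, so $M$ is a free Steiner quasigroup.

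For part 2, given a finite $A\subseteq M$, the natural candidate is the substructure $\langle A\rangle_M$. By part 1, $\langle A\rangle_M$ is free. If $B$ is any free base of $\langle A\rangle_M$, then since $A$ is a finite generating set of $\langle A\rangle_M$, Proposition~\ref{May11_2} gives $|B|\leq |A|$, so $B$ is finite. Thus $\langle A\rangle_M$ is a finitely generated free Steiner quasigroup containing $A$.

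The only subtle point --- and the thing one must watch for rather than a real obstacle --- is that the base $A$ produced in part 1 need not coincide with $B\cap M$ for a free base $B$ of $N$, since an element $a\in M$ may be a product of $<$-smaller elements in $N$ without being a product of $<'$-smaller elements in $M$ (one of the factors may lie outside $M$). This is precisely why the intrinsic characterisation of freeness through HF-orderings, rather than a direct attempt to inherit a base from $N$, is the appropriate tool here.
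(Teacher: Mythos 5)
Your proposal is correct and follows essentially the same route as the paper: both parts rest on Proposition~\ref{May12_2.2}, restricting a well-ordered HF-ordering of $N$ to $M$ (noting that the ``at most one pair of smaller factors'' condition is inherited by the restriction) and then applying the implication from HF-orderings back to free bases; part 2 is handled exactly as in the paper by taking $\langle A\rangle_M$ and invoking part 1. Your closing caveat about the base of $M$ not being recoverable as a trace of the base of $N$ is a sound observation and consistent with the paper's treatment.
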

	\begin{proof}  1. Assume $A$ is a free base of $N$.  By Proposition~\ref{May12_2.2}, there is an HF-ordering of~$N$ which is a well-order and $A$ is the set of elements which are not a product of smaller elements. If we delete from the ordering the elements of $N\smallsetminus M$, we obtain a well-order of $M$ which is an HF-ordering. Let $B\subseteq M$ be the set of elements which are not a product of smaller elements. By Proposition~\ref{May12_2.2}, $B$ is a free base of $M$.
		
			2. By 1, since $\langle A\rangle_M$ is a finitely generated substructure of the free Steiner quasigroup~$M$.
		\end{proof}

	\begin{df} A partial  STS $A$ is \textbf{unconfined}, or \textbf{contains no confined configuration}, if for all finite non-empty $A^\prime \subseteq A$ there is $a \in A^\prime$ that belongs to at most one block in $A^\prime$, that is, $a$ is the product of at most two elements of $A^\prime$.
		
	\end{df}
	
	\begin{prop}\label{2024_nov1} 
	 Free Steiner quasigroups are unconfined Steiner triple systems.
			\end{prop}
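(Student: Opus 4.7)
The plan is to leverage the characterisation in Proposition~\ref{May12_2.2}: a free Steiner quasigroup $M$ admits an HF-ordering $<$ that is a well-order (with the given free base as the set of points that are not a product of smaller elements). So to show $M$ is unconfined, I would fix such an HF-ordering $<$ of $M$ once and for all.

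Given a finite non-empty $A' \subseteq M$, the restriction $<\restriction A'$ is a linear order on a finite set, so it has a maximum element $a$. I claim this $a$ witnesses unconfinedness in $A'$. Indeed, any block in $A'$ containing $a$ has the form $\{a, b, c\}$ with $b, c \in A'$, $b \neq c$, and $a = b \cdot c$; since $a$ is the maximum of $A'$ and $b, c \in A' \setminus \{a\}$, we have $b, c < a$ in $M$. The HF-ordering property applied to $a$ in $M$ states that there is at most one unordered pair $\{b, c\}$ of elements smaller than $a$ with $a = b \cdot c$. Hence $a$ belongs to at most one block in $A'$, which is exactly the unconfinedness condition.

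There is no real obstacle: the entire content is already in Proposition~\ref{May12_2.2}, and the proof is just the observation that taking the $<$-maximum of any finite subset produces the required element. The only mild point to note is that the HF-ordering is on all of $M$ rather than on $A'$, but this is harmless since the maximum of $A'$ under the induced order is still $<$-above every other element of $A'$, so the HF-clause applies directly.
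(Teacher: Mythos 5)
Your proof is correct, and it is a genuine (if mild) variant of the paper's argument. The paper proves the proposition directly from the standard free construction of Proposition~\ref{May12_2}: given a finite non-empty $A'$, it picks $n$ with $A'\subseteq S_{n+1}$ and $A'\smallsetminus S_n\neq\emptyset$, takes $a\in A'\smallsetminus S_n$, and uses the fact that $a$ is the product of exactly one pair of points of $S_n$ to conclude that $a$ lies in at most one block of $A'$ (with a separate easy case when $A'\subseteq S_0$). You instead route through condition~3 of Proposition~\ref{May12_2.2}, fix a well-ordered HF-ordering of all of $M$, and take the $<$-maximum of $A'$; the HF clause at that maximum immediately bounds the number of blocks of $A'$ through it by one. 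The two arguments are close cousins --- the well-order of Proposition~\ref{May12_2.2} is built by refining the levels, so your maximal element plays the role of the paper's element of top level --- but yours is slightly more uniform (no case split on whether $A'\subseteq S_0$) and it is exactly the argument the paper itself later uses for the finite case of Proposition~\ref{2024_nov3} (an HF-ordering implies unconfinedness). The only cost is that you invoke the heavier equivalence of Proposition~\ref{May12_2.2} where the paper only needs the existence of the levels from Proposition~\ref{May12_2}; since both are already established at this point in the paper, this is harmless.
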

	\begin{proof}
	Suppose $A$ is a finite non-empty partial subsystem of $M$. Let $(S_n \mid n < \omega)$ be a standard free construction of $M$. If $A \subseteq S_0$, the claim is clear. Otherwise, since $A$ is finite there is $n \in \omega$ such that 
			\begin{itemize}
				\item $A \subseteq S_{n+1}$
				\item $A \smallsetminus S_n \neq \emptyset$\,.
			\end{itemize}
			Let $a \in A \smallsetminus S_{n}$. Then $a$ is the product of exactly two points in $S_n$. If these two points are in $A$, then $a$ is in one block.  If one of the two points is not in $A$, then $a$ is not in a block. There are no other possibilities.
				\end{proof}
				
We can now connect unconfinedness and HF-orderings.
			
\begin{prop}\label{2024_nov3} The following are equivalent for any partial Steiner triple system $A$:
\begin{enumerate}
\item $A$ is unconfined.
\item There is an HF-ordering of $A$.
\end{enumerate}	
\end{prop}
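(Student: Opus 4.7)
The direction (2) $\Rightarrow$ (1) is the short one: given an HF-ordering $<$ of $A$ and a finite non-empty $A' \subseteq A$, I take $a$ to be the $<$-maximum of $A'$. Any block of $A'$ containing $a$ consists of $a$ together with a pair from $A' \smallsetminus \{a\}$, and both of its other elements lie $<$-below $a$; the HF-condition at $a$ then forces uniqueness of such a pair, so $a$ is in at most one block of $A'$ and witnesses unconfinedness for $A'$.

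For (1) $\Rightarrow$ (2), I first dispose of finite $A$ by induction on $|A|$. Unconfinedness supplies some $a \in A$ in at most one block of $A$. Since any subset of an unconfined partial STS is itself unconfined, the inductive hypothesis gives an HF-ordering $<'$ of $A \smallsetminus \{a\}$, which I extend by declaring $a$ larger than every element of $A \smallsetminus \{a\}$. The HF-condition at points $b \neq a$ is inherited from $<'$, and the HF-condition at $a$ follows from the choice of $a$, since any pair of elements of $A$ smaller than $a$ forming a block with $a$ must come from the (at most one) block of $A$ containing $a$.

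For arbitrary $A$, I pass to the infinite case via compactness, leveraging the first-order nature of the HF-condition (as flagged in the introduction to this section). In a language augmenting that of partial STSs by a binary symbol $<$ and a constant $c_a$ for each $a \in A$, I form a theory $T$ consisting of the (positive and negative) atomic diagram of $A$, the axioms for strict linear order, and, for each $a \in A$, the first-order sentence asserting that at most one unordered pair of elements strictly $<$-below $c_a$ forms a block with $c_a$. A finite fragment of $T$ mentions only constants $c_a$ for $a$ in a finite $A_0 \subseteq A$; since $A_0$ is unconfined, the finite case produces an HF-ordering of $A_0$, and $A_0$ itself with that ordering and the induced block structure models the fragment. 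Compactness then delivers a model $M^*$ of $T$, whose order $<^{M^*}$ transfers back to $A$ via $c_a \mapsto a$ as a linear order; restricting the range of the relevant quantifiers from $M^*$ to $\{c_a : a \in A\}$ only weakens the witness count, so the HF-condition survives on $A$.

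The main obstacle is the infinite case. A direct transfinite-recursive construction that well-orders $A$ and inserts elements one at a time into a growing HF-ordering is awkward, because a newly placed element may sit $<$-below many existing points and ruin the HF-condition at several of them. Compactness sidesteps this by producing the ordering on all of $A$ simultaneously from finite data, and fits naturally with the earlier remark that the existence of an HF-ordering is a first-order matter.
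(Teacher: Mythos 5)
Your proof is correct and follows essentially the same route as the paper: the same finite-case induction with a maximal element witnessing both directions, and a compactness argument to pass to infinite $A$. The only cosmetic difference is that the paper uses propositional compactness on variables $p_{ab}$ encoding the order directly, which avoids the (correctly handled) extra step in your version of restricting from a first-order model $M^*$ back to the constants naming $A$.
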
	
\begin{proof} Consider first the case $A$ finite. By induction on $|A|$ it is easy to prove that, if $A$ is unconfined, then it has an HF-ordering. On the other hand, if there is an HF-ordering  of $A$, for any non-empty $A' \subseteq A$ the greatest element $a\in A'$ belongs to  at most one block of $A$, hence to at most a block of $A'$. Consider now the general case. If $A$ has an HF-ordering, every finite subset of $A$ has an HF-ordering and so it is unconfined, which implies that $A$ is unconfined. For the other direction, we use propositional compactness. Assume that every finite subset  of $A$ has an HF-ordering. For every pair  $(a,b)\in A\times A$, let $p_{ab}$ be a corresponding propositional variable. The set of sentences  
$$ \{p_{ab}\wedge p_{bc}\rightarrow p_{ac}\mid a,b,c\in A\}\cup \{\neg p_{aa}\mid a\in A\}\cup \{p_{ab}\vee p_{ba}\mid a,b\in A\}$$
 defines a linear ordering on $A$. If we add 
 $$\{\neg (p_{ba}\wedge p_{ca}\wedge p_{b^\prime a}\wedge p_{c^\prime a} )\mid \{a,b,c\}\neq\{a,b^\prime,c^\prime\} \text{ blocks of } A\}$$ 
 we are defining an HF-ordering on $A$. Every finite subset is satisfiable and compactness gives a valuation that satisfies the whole set.
\end{proof}
			
A notion of \textit{rank} of a partial Steiner triple system is defined in~\cite{sieben}, which corresponds to a natural choice for a predimension function.

\begin{df} Let $A$ be a finite partial STS and let $\bl(A)$ be the set of blocks in $A$. Then the \textbf{rank}, or \textbf{predimension}, of $A$ is
	\[ \delta(A)= |A|-|\bl(A)| \, .\]
\end{df}

It is easy to see that the predimension of a finite Steiner quasigroup is negative except in the case of the Steiner quasigroup of order 7 (the Fano plane), which has predimension 0. On the other hand, the predimension of a finite unconfined partial Steiner triple system is always non-negative (see \cite{barcas3} for details).

 Let $Aa$ be a partial Steiner triple system, with $a\not\in A$. Then 
$$\delta(Aa)= \delta(A)+1-|\bl_a(A)| \, ,$$
 where $\bl_a(A)$ is the set of blocks containing $a$ and intersecting $A$. Hence, if there is at most one such block, $\delta(Aa)\geq \delta(A)$. It follows that if $A\subseteq B$ are finite partial Steiner triple systems and there is an HF-ordering of $B$ with initial segment $A$, then $\delta(B)\geq\delta(A)\geq 0$.

\begin{prop}\label{2024_nov4}
	Finitely generated unconfined Steiner triple systems are free Steiner quasigroups.
\end{prop}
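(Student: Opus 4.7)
My plan is to use Proposition~\ref{May12_2.2} by showing, for a suitable minimal generating set $A\subseteq M$, that the standard level filtration over $A$ is a standard free construction in the sense of Proposition~\ref{May12_2}. Once that is verified, $A$ is a free base of $M$ and $M$ is free by definition.

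First I would take $A\subseteq M$ to be a generating set of minimum (finite) cardinality. Minimality forces $A$ to be block-free: if some $a\in A$ were equal to $b\cdot c$ for $b,c\in A\smallsetminus\{a\}$, then $A\smallsetminus\{a\}$ would already generate $M$, contradicting the choice of $A$. Consider the levels $(S_k)_{k<\omega}$ of $M$ over $A$, with $S_0=A$ and $S_{k+1}=\{x\cdot y:x,y\in S_k\}$; each $S_k$ is finite because $A$ is.

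I would then prove by induction on $k$ that the levels satisfy properties~(a) and (b) of Proposition~\ref{May12_2}. The base case $k=0$ is immediate from block-freeness of $A$. For the inductive step the crucial property is (b): every $z\in S_{k+1}\smallsetminus S_k$ has a unique pair $\{b,c\}\subseteq S_k$ with $b\neq c$ and $z=b\cdot c$. Suppose, towards contradiction, that $z=b\cdot c=b'\cdot c'$ for two distinct such pairs. Quasigroup cancellation forces $\{b,c\}\cap\{b',c'\}=\emptyset$, giving four distinct elements of $S_k$ and a fifth $z\in S_{k+1}$. Using the inductive hypothesis to factor $b,c,b',c'$ at lower levels, I would locate a further element $w\in M$ (in the simplest case $w=b\cdot b'=c\cdot c'$) such that $\{b,c,b',c',z,w\}$ contains the four blocks $\{b,c,z\},\{b',c',z\},\{b,b',w\},\{c,c',w\}$ and no others within that subset; each point then lies in exactly two blocks, yielding a Pasch-type confined configuration in $M$, in contradiction with unconfinedness. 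Property~(a) at level $k+1$ would then follow from a parallel block-counting argument combined with (b).

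The main obstacle is showing in full generality, rather than just in the cleanest case, that the closing configuration really is confined: one must produce the closing element (or a larger closing subset) and simultaneously verify that no stray additional block inside it drops anybody's block-count below two. Extracting this systematically from the inductive hypothesis at lower levels, anchored in the block-freeness of $A$ at level $0$, is the delicate part of the argument, and is where the recursive structure of the level filtration is genuinely needed.
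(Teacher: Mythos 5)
The overall strategy --- take a generating set $A$ of minimal cardinality and verify conditions (a) and (b) of Proposition~\ref{May12_2} for the levels over $A$ --- is viable, and your observation that minimality makes $A$ block-free is correct. But the mechanism you propose for the crucial step (b) is a genuine gap. From $z=b\cdot c=b'\cdot c'$ with $\{b,c\}\cap\{b',c'\}=\emptyset$ you try to close up to a confined configuration, in the model case by taking $w=b\cdot b'=c\cdot c'$. Nothing forces $b\cdot b'$ and $c\cdot c'$ to coincide --- that is precisely the Pasch configuration, which need not occur --- and in general a repeated factorization does not sit inside any finite subset in which every point lies on at least two blocks: each of $b,c,b',c'$ lies on only one block of $\{b,c,b',c',z\}$, and factoring them at lower levels keeps producing points with a single downward block, with no guarantee of ever closing. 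You flag this as the delicate part, but it is not merely delicate: the local construction does not exist, and the contradiction with unconfinedness must be extracted globally. The same applies to property (a), for which you give no argument at all.

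The way to repair your strategy is through the predimension $\delta$ and HF-orderings, which is essentially what the paper does. Each element of $S_{k+1}\smallsetminus S_k$ lies on at least one block with two elements of $S_k$, so $\delta(S_{k+1})\leq\delta(S_k)\leq\delta(A)=|A|$; a failure of uniqueness in (b) gives some element lying on two such blocks and hence $\delta(S_{k+1})\leq |A|-1$. By unconfinedness and Proposition~\ref{2024_nov3}, $S_{k+1}$ has an HF-ordering, and the set $B$ of its elements that are not products of smaller ones satisfies $|B|=\delta(S_{k+1})<|A|$ and generates $S_{k+1}\supseteq A$, hence generates $M$ --- contradicting minimality of $A$. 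The paper itself avoids minimal generating sets altogether: it enumerates $M$ so that $\delta$ is non-increasing along the enumeration, uses $\delta\geq 0$ (unconfinedness) to get stabilization at a finite stage, and then glues an HF-ordering of the initial segment to the tail of the enumeration to obtain a well-ordered F-ordering over a possibly different base $A'_m$, concluding by Proposition~\ref{May12_2.2}. Either way the contradiction comes from a global counting argument with $\delta$, not from a local search for a confined configuration.
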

\begin{proof} Let $A$ be a finite set of generators of the unconfined Steiner quasigroup $M$. Let $(S_n\mid n<\omega)$ be the levels of $M$ over $A$. Every $S_n$ is finite. Each element of  $S_{n+1}\smallsetminus S_n$ is in at least one block intersecting $S_n$, and hence  there is an enumeration $(a_i\mid i<k)$ of $S_{n+1}\smallsetminus S_n$ such that  $\delta(S_na_1,\ldots,a_{i+1})\leq \delta(S_na_1,\ldots,a_i)\leq \delta(S_n)$. Concatenating  these enumerations after an arbitrary enumeration of $A=S_0$ gives an enumeration $(a_n\mid n<\omega)$ of $M$. Since $M$ is unconfined, $\delta(a_0,\ldots,a_n)\geq 0$  for every finite~$n$. Therefore there is some $n$  such that, for every $m\geq n$,\,  $\delta(a_0,\ldots,a_m)= \delta(a_0,\ldots,a_n)$. Since $M$ is unconfined, $A_m=\{a_i\mid i\leq m\}$ has an HF-ordering, and we may assume that the elements of $A_m$ which are not a product of smaller elements form an initial segment $A^\prime_m\subseteq A_m$. We start with this HF-ordering of $A_m$ and add the enumeration $(a_i\mid i>m)$, obtaining an F-ordering of $M$ over $A^\prime_m$ which is a well-order. Then $A^\prime_m$ is a free base of $M$ by Proposition~\ref{May12_2.2}.

\end{proof}

	\section{Reduced terms}
	
If $M$ is a Steiner quasigroup generated by a set $A$ and $\bar{a}$ is an enumeration of $A$, then for every $b \in M$ there is a term $t_b(\bar{x})$ such that $b=t_b(\bar{a})$ -- and, in general, the term $t_b$ is not unique. For instance, two terms that can be obtained from one another by commuting one or more subterms are a trivial example of the non-uniqueness. There is also no canonical way to write $b$ as a term in $\bar{a}$, but we can define a notion of a \textit{reduced term} as one that contains no `redundant' products (e.g. of the form $s(ts)$, or $ss$ for subterms $s$ and $t$, etc.). We also introduce a \textit{rank} for terms which measures their syntactic complexity. 
 When $A$ freely generates $M$, for a given $b$ the rank of a reduced term $t$ such that $b=t(\bar{a})$ gives useful information about the level of $b$ in $M$.

	\begin{df} We define an equivalence relation $\sim$ on the set of terms in such a way that $t_1 \sim t_2$ if $t_2$ is obtained from $t_1$ by one or more applications of commutativity to subterms.
		
		A term is said to be \textbf{reduced} if it contains no occurrences of subterms of any of the following forms
		\begin{itemize} 
			\item $t_1 t_2$ with $t_1 \sim t_2$
			\item  $t_1(t_2t_3)$ with $t_1 \sim t_2$ or $t_1 \sim t_3$
			\item $(t_1t_2)t_3$ with $t_1 \sim t_3$ or $t_2 \sim t_3$.
		\end{itemize}
	\end{df}
	
	
	\begin{df} The \textbf{rank} $\rk(t)$ of a term $t$ is defined to be
		\begin{itemize}
			\item $0$ if $t$ is a variable
			\item $\mathrm{max} \{ \rk (r), \rk(s) \} + 1$ if $t = r\cdot s$.
		\end{itemize}
	\end{df}
	
	\begin{lemma}\label{May12_3}\begin{enumerate}
			\item If $t_1\cdot t_2\sim r_1\cdot r_2$, then either  $t_1\sim r_1$ and $t_2\sim r_2$,  or  $t_1\sim r_2$ and $t_2\sim r_1$.
			\item if $t\sim r$ and $t^\prime$ is a subterm of $t$ then $t^\prime\sim r^\prime$ for some subterm $r^\prime$ of $r$.
			\item If $t_1\sim t_2$, then $\rk(t_1)= \rk(t_2)$  and  $t_1(\bar{a})= t_2(\bar{a})$  for any tuple $\bar{a}$.
			\item If  $t_1\sim t_2$ and $t_1$ is reduced, then $t_2$ is reduced.
			\item Any subterm of a reduced term is reduced.
			\item Assume $t,t^\prime$ are reduced, $t\not\sim t^\prime$ and there is no term $r$ such that  $t\sim t^\prime\cdot r$ or $t^\prime\sim t\cdot r$. Then $t\cdot t^\prime$ is reduced.
		\end{enumerate}
	\end{lemma}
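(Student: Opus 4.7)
The plan is to treat $\sim$ as the congruence on terms generated by commuting siblings, and to argue essentially by induction on the number of elementary swaps witnessing an equivalence. It is helpful to think of a term as a rooted binary tree whose leaves are labelled by variables, and of $\sim$ as isomorphism of the underlying \emph{unordered} binary trees; with this picture in mind, (3) is immediate because swapping siblings does not change depth nor, thanks to commutativity of $\cdot$ in any Steiner quasigroup, the interpretation; (5) is immediate because the forbidden patterns are defined locally and pass to subterms.

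The core of the proof is (1), from which (2) and (4) then follow cleanly. For (1) I would argue by induction on a derivation of $t_1\cdot t_2\sim r_1\cdot r_2$ from the generating swaps: the base case of a single elementary swap is either a swap at the root (exchanging $t_1$ and $t_2$) or a swap inside one of $t_1,t_2$ (leaving the other untouched), and in both cases the statement is immediate. The inductive step uses transitivity together with a short case split on whether consecutive swaps preserve or exchange the roles of the two top-level children. For (2), once (1) is known, I would induct on the depth of $t$: if $t'=t$ take $r'=r$; otherwise $t=t_1\cdot t_2$, $r=r_1\cdot r_2$, (1) gives $t_i\sim r_{\pi(i)}$ for some permutation $\pi$, and $t'$ is a subterm of some $t_i$, so the inductive hypothesis applies.

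For (4), I would argue by contrapositive: if $t_2$ is not reduced, a forbidden subterm $s$ occurs in $t_2$. By symmetry of $\sim$ and (2), some subterm $s'$ of $t_1$ satisfies $s'\sim s$; by (1) (applied once or twice, depending on whether the forbidden pattern is $uu$, $u(vw)$ or $(uv)w$) the shape of $s'$ must match the same kind of forbidden pattern, with the required $\sim$-equivalences inherited via (1) and transitivity. Hence $t_1$ is not reduced, contradicting the hypothesis.

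For (6), suppose for contradiction that $t\cdot t'$ is not reduced. By (5) the three subterms $t$, $t'$ and $t\cdot t'$ itself are the only candidates, and since $t$ and $t'$ are reduced the obstruction must be at the root. The three defining patterns then force, respectively, $t\sim t'$, or $t'=t_2\cdot t_3$ with $t\sim t_2$ or $t\sim t_3$, or $t=t_1\cdot t_2$ with $t_1\sim t'$ or $t_2\sim t'$; in the last two cases we immediately obtain $t'\sim t\cdot r$ or $t\sim t'\cdot r$ using part (3) of the lemma and commutativity, contradicting the assumption. I expect the main subtlety is keeping the bookkeeping in (1) and (4) straightforward; once those are established, the other parts fall out essentially by unpacking definitions.
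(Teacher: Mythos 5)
Your proposal is correct and follows essentially the same route as the paper, which merely sketches each item (induction on the number of commutativity applications for (1), induction on the term for (2), and the remaining parts as direct consequences or routine checks); your write-up just supplies the details the paper leaves implicit. One cosmetic slip: in your argument for (6) the fact you need is that $\sim$ is a congruence for the product (so $t_2\sim t$ gives $t_2\cdot t_3\sim t\cdot t_3$), which is immediate from the definition of $\sim$ rather than a consequence of part (3).
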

	\begin{proof} 1 can be easily checked by induction on the number of applications of commutativity needed for $t_1\cdot t_2\sim r_1\cdot r_2$.  2 is proved by induction on $t$.  3 is clear.  4 follows from 2. 5  and 6 are clear.
	\end{proof}

The next result clarifies how the levels in a construction of a Steiner quasigroup $M$ are linked to the ranks of the reduced terms giving the elements of $M$.
	\begin{lemma}\label{May12_4} Let $M=\langle \bar{a}\rangle$ be a  Steiner quasigroup, with a construction  with levels $(S_n\mid n<\omega)$.
		\begin{enumerate}
			\item If $t$ is a term such that $\rk(t)\leq n$, then $t(\bar{a})\in S_n$.
			\item If $b\in S_n$, then  $b= t(\bar{a})$  for some reduced term $t$ of rank $\rk(t)\leq n$.
			\item  $S_n= \{t(\bar{a})\mid \rk(t)\leq n\} = \{t(\bar{a})\mid t \mbox{ is reduced and } \rk(t)\leq n\}$.
		\end{enumerate}
	\end{lemma}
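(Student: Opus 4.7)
The plan is to prove the three parts in order, since (3) will combine (1) and (2) mechanically, and (1) gives one inclusion needed for (2).

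Part (1) I would handle by a routine induction on $n$. The base case $\rk(t)=0$ forces $t$ to be a variable, so $t(\bar{a})\in A=S_0$. In the inductive step, either $\rk(t)\leq n$ and the induction hypothesis applies directly, or $t=r\cdot s$ with $\rk(r),\rk(s)\leq n$, in which case the hypothesis gives $r(\bar{a}),s(\bar{a})\in S_n$ and the definition of the levels yields $t(\bar{a})\in S_{n+1}$.

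Part (2) carries the content. I would prove it by induction on $n$; the base case is immediate since variables are reduced terms of rank $0$. For the inductive step, take $b\in S_{n+1}$ and split: if $b\in S_n$ the hypothesis applies directly; otherwise $b=c\cdot d$ for some $c,d\in S_n$, and the hypothesis supplies reduced terms $r,s$ of rank $\leq n$ with $c=r(\bar{a})$, $d=s(\bar{a})$. The natural candidate for $b$ is $t_0:=r\cdot s$, which has $\rk(t_0)\leq n+1$. Either $t_0$ is already reduced, or a forbidden pattern at the root occurs and I would show that in each such case $b$ admits a reduced representative of strictly smaller rank, arising as a subterm of $r$ or $s$.

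The main obstacle is therefore this case analysis, which I expect to be the only non-routine step. If $r\sim s$, Lemma~\ref{May12_3}(3) gives $c=d$, so $b=c=r(\bar{a})$ and $r$ itself works. If $s=s_1\cdot s_2$ with $r\sim s_i$ for some $i\in\{1,2\}$, then Lemma~\ref{May12_3}(3) yields $r(\bar{a})=s_i(\bar{a})$, and axiom~3 of Steiner quasigroups together with commutativity collapses $b$ to $s_{3-i}(\bar{a})$; by Lemma~\ref{May12_3}(5) the term $s_{3-i}$ is reduced, and its rank is at most $\rk(s)-1\leq n-1$. The symmetric case $r=r_1\cdot r_2$ with $s\sim r_i$ is identical after commuting the two sides of $t_0$. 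Lemma~\ref{May12_3}(1) is what lets me pin down the root-level shape of $t_0$ up to $\sim$ and so restrict to these finitely many essentially different failures. Part (3) is then an immediate combination: part (1) gives the chain $\{t(\bar{a})\mid t\text{ reduced},\ \rk(t)\leq n\}\subseteq\{t(\bar{a})\mid\rk(t)\leq n\}\subseteq S_n$, and part (2) supplies the reverse inclusion $S_n\subseteq\{t(\bar{a})\mid t\text{ reduced},\ \rk(t)\leq n\}$, giving both equalities at once.
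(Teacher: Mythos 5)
Your proof is correct and follows essentially the same route as the paper: induction on $n$ for parts (1) and (2), with part (3) as an immediate combination, and the same root-level case analysis of the ways $r\cdot s$ could fail to be reduced, resolved via idempotence and the axiom $x\cdot(x\cdot y)=y$. The only (harmless) difference is that where you directly exhibit a lower-rank reduced representative of $b$ in each failure case, the paper notes that each such case would place $b$ in $S_n$ and so derives a contradiction with $b\notin S_n$.
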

	\begin{proof} 1 is proved by induction on $n$.  3 follows from 1 and 2. We prove 2 by induction on~$n$.  Assume $n=0$.  If $\bar{a}= \langle a_i : i \in I \rangle$, then $b =a_k$  for some $k \in I$ and $t=x_k$ (a variable, hence reduced and of rank $0$) is the required term. For the inductive step, assume $b\in S_{n+1}$.  If $b\in S_n$ we apply the induction hypothesis. If $b\not\in S_n$, then $b= b_1\cdot b_2$ for $b_1,b_2\in S_n$.  By induction hypothesis, there are reduced terms  $t_1,t_2$ of rank $\leq n$ such that $b_1= t_1(\bar{a})$ and $b_2= t_2(\bar{a})$.  If $t_1\sim t_2$, then $t_1(\bar{a})=t_2(\bar{a})$ and then $b= b_1\cdot b_2 = b_1\cdot b_1 = b_1 \in S_n$, a contradiction. Hence, $t_1\not\sim t_2$.  If there is a term $r$ such that $t_1\sim t_2\cdot r$, then  $b= b_1\cdot b_2=(t_2(\bar{a})\cdot r(\bar{a}))\cdot b_2 = (b_2\cdot r(\bar{a}))\cdot b_2 = r(\bar{a})$. But $\rk(r) <\rk(t_2\cdot r) =\rk(t_1)\leq n$, which implies $b\in S_n$, a contradiction. A similar contradiction follows from the assumption that $t_2\sim t_1\cdot r$ for some term $r$. By  Lemma~\ref{May12_3}, $t= t_1\cdot t_2$ is reduced. Moreover $\rk(t)\leq n+1$  and  $b= t(\bar{a})$.
	\end{proof}

	\section{Terms and independence}

In a standard free construction of a Steiner quasigroup $M$ over the free base $\bar{a}$, the rank of a reduced term $t$ determines the level of the element $t(\bar{a})$. Moreover, the independence of $\bar{a}$ can be characterised in terms of the behaviour of $\bar{a}$ with respect to non-equivalent reduced terms.

	\begin{prop}\label{May19_1}
		Let $M=\langle \bar{a}\rangle$ be a free Steiner quasigroup with standard free construction $(S_i\mid i<\omega)$, let  $\bar{a}= \langle a_i : i \in I \rangle$ be  independent, and let $t= t(\bar{x})$ be a reduced term of rank $k$. Then
		\begin{enumerate}
			\item $t(\bar{a})\in S_k\smallsetminus S_{<k}$
			\item if $r$ is reduced and  $r(\bar{a})= t(\bar{a})$, then $r\sim t$.
		\end{enumerate}
	\end{prop}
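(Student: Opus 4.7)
The plan is to prove both parts together by induction on $k$, establishing simultaneously that (1) holds for every reduced term of rank $\leq k$ and that (2) holds for every pair of reduced terms of rank $\leq k$; the proposition as stated then follows by choosing $k = \max(\rk(r), \rk(t))$. The base case $k = 0$ is immediate: reduced terms of rank $0$ are variables, and distinct entries of $\bar{a}$ are unequal.

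For the inductive step, write $t = t_1 \cdot t_2$ with each $t_i$ reduced (by Lemma~\ref{May12_3}) and $\max(\rk(t_1), \rk(t_2)) = k-1$. The induction hypothesis gives $t_i(\bar{a}) \in S_{\rk(t_i)} \smallsetminus S_{<\rk(t_i)}$, and combined with $t_1 \not\sim t_2$ (from reducedness of $t$) it yields $t_1(\bar{a}) \neq t_2(\bar{a})$. When $\rk(t_1) = \rk(t_2) = k-1$, both $t_i(\bar{a}) \in S_{k-1} \smallsetminus S_{k-2}$ are distinct, so property (a) of Proposition~\ref{May12_2} gives $t(\bar{a}) \notin S_{k-1}$, proving (1). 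The substantial subcase is $\rk(t_1) = k-1 > \rk(t_2)$, so that $t_2(\bar{a}) \in S_{k-2}$. Assuming $t(\bar{a}) \in S_{k-1}$ for contradiction, I further decompose $t_1 = t_{11} \cdot t_{12}$ and use the IH together with Proposition~\ref{May12_2}(b) to identify $\{t_{11}(\bar{a}), t_{12}(\bar{a})\}$ as the unique pair in $S_{k-2}$ with product $t_1(\bar{a})$. If $t(\bar{a}) \in S_{k-2}$, the ``moreover'' clause of Proposition~\ref{May12_2} forces $t_2(\bar{a}) \in \{t_{11}(\bar{a}), t_{12}(\bar{a})\}$, so IH part (2) gives $t_2 \sim t_{1i}$ for some $i$, which contradicts the reducedness of $t = (t_{11} \cdot t_{12}) \cdot t_2$. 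If instead $t(\bar{a}) \in S_{k-1} \smallsetminus S_{k-2}$, the Steiner identity $x \cdot (x \cdot y) = y$ gives $t_1(\bar{a}) \cdot t(\bar{a}) = t_2(\bar{a}) \in S_{k-2}$; since $t(\bar{a}) = t_1(\bar{a})$ would force $t_2(\bar{a}) = t_1(\bar{a})$, the values $t_1(\bar{a}), t(\bar{a})$ are distinct elements of $S_{k-1} \smallsetminus S_{k-2}$ whose product lies in $S_{k-1}$, again contradicting property (a).

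For part (2), let $r$ be reduced with $\rk(r) \leq k$ and $r(\bar{a}) = t(\bar{a})$. By (1), $t(\bar{a}) \in S_k \smallsetminus S_{<k}$, while Lemma~\ref{May12_4}(1) forces $\rk(r) \geq k$, so $\rk(r) = k$. Writing $r = r_1 \cdot r_2$, both factors are reduced of rank $\leq k-1$, so by IH their values lie in $S_{k-1}$; the uniqueness clause of Proposition~\ref{May12_2}(b) then gives $\{r_1(\bar{a}), r_2(\bar{a})\} = \{t_1(\bar{a}), t_2(\bar{a})\}$, and after relabelling, IH part (2) yields $r_i \sim t_i$, hence $r \sim t$.

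The main obstacle will be the asymmetric subcase of part (1): property (a) of Proposition~\ref{May12_2} does not apply directly because the factors $t_1$ and $t_2$ of $t$ sit at different levels. The workaround requires the deeper decomposition $t_1 = t_{11} \cdot t_{12}$, the ``moreover'' clause of Proposition~\ref{May12_2}, the Steiner identity, and a crucial appeal to IH part (2) at smaller rank to translate the semantic coincidence $t_2(\bar{a}) = t_{1i}(\bar{a})$ into the syntactic equivalence $t_2 \sim t_{1i}$ that violates reducedness.
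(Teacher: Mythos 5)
Your proof is correct, and in the one genuinely hard spot it takes a different route from the paper. Both proofs run a joint induction and treat the equal-rank case and part (2) the same way (via property (a) and the uniqueness clause (b) of Proposition~\ref{May12_2}, plus the inductive hypothesis of part (2) to turn semantic equalities of lower-rank values into syntactic equivalences). The divergence is in excluding $t(\bar{a})\in S_{<k-1}$ when $\rk(t_1)>\rk(t_2)$: the paper (its Case~3) picks an auxiliary reduced term $s$ of the lower rank $j$ with $s(\bar{a})=t(\bar{a})$, checks by a small case analysis that $s\cdot t_1$ is reduced, and applies the inductive hypothesis to get $s\cdot t_1\sim t_2$, so that $t\sim t_1\cdot(s\cdot t_1)$ is not reduced. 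You instead stay entirely inside the combinatorics of the standard free construction: you decompose $t_1=t_{11}\cdot t_{12}$, invoke the ``moreover'' clause of Proposition~\ref{May12_2} to identify $\{t_2(\bar{a}),t(\bar{a})\}$ with $\{t_{11}(\bar{a}),t_{12}(\bar{a})\}$, and then a single appeal to the inductive hypothesis of part (2) gives $t_2\sim t_{1i}$, contradicting reducedness of $(t_{11}\cdot t_{12})\cdot t_2$. Your version avoids constructing and verifying reducedness of an auxiliary term, at the price of using the uniqueness of the decomposition pair twice and needing $t_1$ to be a product (which is automatic there, since $\rk(t_1)\geq 1$); the paper's version is more syntactic and is closer in spirit to the manipulations reused later in Lemma~\ref{May26_1}. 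Your handling of the remaining exclusion ($t(\bar{a})\in S_{k-1}\smallsetminus S_{k-2}$) via the Steiner identity and property (a) is exactly the paper's terse ``Note that $b\neq b_2$ and $b\not\in S_k$'', spelled out. Two small points you should make explicit if you write this up: that $r_1(\bar{a})\neq r_2(\bar{a})$ (from $r_1\not\sim r_2$ and the inductive hypothesis) before invoking uniqueness in part (2), and that $t_2(\bar{a})\neq t(\bar{a})$ and $t_{11}(\bar{a})\neq t_{12}(\bar{a})$ before invoking the ``moreover'' clause.
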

	\begin{proof} We prove 1 and 2 jointly by induction on $k$, but with the extra assumption that $\rk(r)=\rk(t)$. This assumption can be eliminated  later since 1 excludes other possibilities.
		
		\underline{Base case $k=0$}.  If $\rk(t)=0$, then $t$ is a variable $x_i$ and  $t(\bar{a})= a_i\in S_0$.  Moreover, if $r$ has rank 0, $r$ is a variable $x_j$  and  $t(\bar{a})= r(\bar{a})$  implies $a_i =a_j$ and therefore  $x_i = x_j$. Hence  $t\sim r$.
		
		\underline{Inductive step $k+1$}.  Let $t= t_1\cdot t_2$, with $k_1= \rk(t_1)\leq \rk(t_2) = k$ and let  $b= t(\bar{a})$.  By inductive hypothesis, $b_1= t_1(\bar{a})\in S_{k_1}\smallsetminus S_{<k_1}$ and $b_2= t_2(\bar{a})\in S_k\smallsetminus S_{<k}$. Then $b= b_1\cdot b_2$ and there are different cases.
		
		\emph{Case 1}. Assume $k_1 = k$.  If  $b_1 = b_2$, then $b= b_1= b_2$ and by inductive hypothesis $t_1\sim t_2$. But then $t$ is not reduced. Hence, 
		$b_1\neq b_2$. By construction, $b\in S_{k+1}\smallsetminus S_k$.  Moreover, if $r$ is reduced of rank $k+1$ with $r(\bar{a})= b$, then $r= r_1\cdot r_2$, with $r_1,r_2$ reduced and, without loss of generality, $\rk(r_1)\leq \rk(r_2)= k$. By inductive hypothesis $b_1^\prime = r_1(\bar{a})\in S_{\rk(r_1)}\smallsetminus S_{<\rk(r_1)}$  and $b_2^\prime = r_2(\bar{a})\in S_k\smallsetminus S_{<k}$. Then $b= b_1^\prime\cdot b_2^\prime$ and by construction $\{b_1,b_2\}= \{b_1^\prime,b_2^\prime\}$. If $b_1= b_1^\prime$ and $b_2 = b_2^\prime$, then $\rk(r_1)= k_1$ and by inductive hypothesis $t_1\sim r_1$ and $t_2\sim r_2$. Therefore in this case $t\sim r$. If $b_1= b_2^\prime$ and $b_2= b_1^\prime$, then $k=k_1= \rk(r_1)$ and, by inductive hypothesis, $t_1\sim r_2$ and $t_2\sim r_1$. Again we conclude that $t\sim r$.
		
		\emph{Case 2}. Assume $k_1 <k$ and $b\not \in S_{<k}$.  Note that $b\neq b_2$ and $b\not \in S_k$. By construction, $b= b_1\cdot b_2 \in S_{k+1}$.  Now let $r$ be reduced, of rank $k+1$ and such that $r(\bar{a})= t(\bar{a})=b$. Exactly as in the previous case we conclude that $r\sim t$.
		
		\emph{Case 3}. The last case is $k_1 < k$ and  $b\in S_j\smallsetminus S_{<j}$  for some $j<k$.  We show this is impossible.   We know that there is some reduced term $s$ of rank $j$ such that $s(\bar{a}) = b$. Now we claim that $s\cdot t_1$ is reduced. Otherwise, there are two possible cases:
		\begin{enumerate}
			\item[(i)] $s\sim t_1$.  But then $b= b_1$ and hence $b=b_1 = b_2$, which is not the case.
			\item[(ii)]  $t_1\sim s\cdot r$ or $s\sim t_1\cdot r$  for some (reduced)  term $r$.  Then $r(\bar{a})= t_1(\bar{a})\cdot s(\bar{a})= b_1\cdot b= b_2$  and by inductive hypothesis $\rk(r)=k$, contradicting $\rk(t_1)=k_1<k$ and $\rk(s)= j<k$.
		\end{enumerate}
		Therefore, we conclude that $s\cdot t_1$ is reduced. Then, since $\rk(s\cdot t_1)\leq k$ and $s\cdot  t_1 (\bar{a}) = b\cdot b_1 = b_2$, we can apply the inductive hypothesis to get $\rk(s\cdot t_1)= k$  and  $s\cdot t_1\sim t_2$. Then $t\sim t_1\cdot t_2\sim t_1\cdot (s\cdot t_1)$, which says that $t$ is not reduced.
	\end{proof}

From now on, tuples and subsets are understood to be in a host Steiner quasigroup $M$.

	\begin{cor}\label{May19_2}  If $\bar{a}$ is independent, then $S_n\smallsetminus S_{<n} =\{t(\bar{a})\mid t \mbox{ is reduced and } \rk(t) = n\}$.
	\end{cor}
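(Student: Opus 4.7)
The plan is to show containment in both directions, using Proposition~\ref{May19_1} for the easy direction and combining it with Lemma~\ref{May12_4} for the other. The corollary is essentially a direct consequence of what has already been proved, so the proof should be very short; the only subtlety is in reading off that a reduced term witnessing membership in $S_n$ must in fact have rank exactly $n$ when $\bar{a}$ is independent.

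For the inclusion $\{t(\bar{a})\mid t \text{ reduced},\ \rk(t)=n\}\subseteq S_n\smallsetminus S_{<n}$, I would simply invoke Proposition~\ref{May19_1}(1): if $t$ is reduced and $\rk(t)=n$, then $t(\bar{a})\in S_n\smallsetminus S_{<n}$, which is exactly what is needed.

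For the reverse inclusion, I would take $b\in S_n\smallsetminus S_{<n}$. By Lemma~\ref{May12_4}(2), there exists a reduced term $t$ with $\rk(t)\leq n$ and $b=t(\bar{a})$. Now apply Proposition~\ref{May19_1}(1) to $t$: this gives $b=t(\bar{a})\in S_{\rk(t)}\smallsetminus S_{<\rk(t)}$. Since the sets $S_k\smallsetminus S_{<k}$ for varying $k$ are pairwise disjoint, and $b$ belongs to both $S_n\smallsetminus S_{<n}$ and $S_{\rk(t)}\smallsetminus S_{<\rk(t)}$, we must have $\rk(t)=n$. This produces the required reduced term of rank exactly~$n$.

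The argument has no real obstacle; the whole content is already packaged in the two earlier results, and the only thing to notice is that independence of $\bar{a}$ forces the rank of the reduced representative to match the level exactly (rather than being merely an upper bound, as in Lemma~\ref{May12_4} for general generating sets). A one-paragraph proof should suffice.
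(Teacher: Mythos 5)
Your proof is correct and follows exactly the route the paper intends: the paper's proof is just the citation ``By Proposition~\ref{May19_1}(1) and Lemma~\ref{May12_4}'', and your argument fills in precisely those details, including the key observation that disjointness of the sets $S_k\smallsetminus S_{<k}$ forces $\rk(t)=n$. No gaps.
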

	\begin{proof} By Proposition~\ref{May19_1}(1) and Lemma~\ref{May12_4}.
	\end{proof}

Proposition~\ref{May19_1} ensures that non-equivalent reduced terms produce different elements when applied to a free base in a standard free construction.
	
	\begin{cor}\label{May19_3}  The tuple $\bar{a}$ is independent if and only if $t(\bar{a})\neq r(\bar{a})$ for all reduced terms $t,r$ such that $t\not\sim r$.
	\end{cor}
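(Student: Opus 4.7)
The plan is to split the equivalence into its two directions; the forward direction will be a direct corollary of Proposition~\ref{May19_1}(2), while the converse requires transporting that same result from an abstract free Steiner quasigroup of the right rank to $\langle\bar{a}\rangle$ along a canonical surjection.

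For the forward direction, assume $\bar{a}$ is independent. Then $\langle\bar{a}\rangle$ admits a standard free construction over $\bar{a}$ by Proposition~\ref{May12_2}, so Proposition~\ref{May19_1}(2) applies: whenever $t, r$ are reduced terms with $t(\bar{a}) = r(\bar{a})$, we obtain $t \sim r$. The contrapositive is the desired implication.

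For the converse, I invoke the existence, granted by the general theory of varieties, of a Steiner quasigroup $F$ freely generated by some $\bar{b} = \langle b_i : i \in I\rangle$ of the same cardinality as $\bar{a}$. The hypothesis on $\bar{a}$ in particular forces $a_i \neq a_j$ for $i \neq j$ (apply it to the distinct variables $x_i, x_j$, which are non-equivalent reduced terms of rank $0$), so the map $b_i \mapsto a_i$ is well defined and, by the UMP of $F$ over $\bar{b}$, extends to a surjective homomorphism $h: F \to \langle\bar{a}\rangle$. To see that $h$ is injective, take $x, y \in F$; by Lemma~\ref{May12_4}(2), $x = t(\bar{b})$ and $y = r(\bar{b})$ for reduced terms $t, r$. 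Inside $F$ the forward direction (which we have already proved) applied to the independent tuple $\bar{b}$ gives $x = y$ iff $t \sim r$; by our hypothesis on $\bar{a}$, the same equivalence holds for $h(x) = t(\bar{a})$ and $h(y) = r(\bar{a})$. Hence $h(x) = h(y)$ implies $x = y$, so $h$ is an isomorphism sending $\bar{b}$ to $\bar{a}$, which certifies that $\langle\bar{a}\rangle$ is freely generated by $\bar{a}$.

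The main subtlety to watch is precisely the well-definedness of $h$ on generators: one must extract from the hypothesis, before anything else, that the entries of $\bar{a}$ are pairwise distinct, so that the bijection $\bar{b} \leftrightarrow \bar{a}$ makes sense and the UMP of $F$ can be invoked. Once this is handled, the argument is essentially a transport of Proposition~\ref{May19_1}(2) along the surjection $h$, so no new combinatorial work on reduced terms is required.
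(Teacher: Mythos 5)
Your proof is correct, and the forward direction coincides with the paper's (both are immediate from Proposition~\ref{May19_1}(2)). For the converse, however, you take a genuinely different route. The paper works directly with an arbitrary target quasigroup $N$: it defines the candidate extension $\hat f(t(\bar a)) = t(f(\bar a))$ on reduced terms, uses the hypothesis only to check well-definedness, and then verifies the homomorphism property by a case analysis on why $t_1\cdot t_2$ might fail to be reduced ($t_1\sim t_2$, or $t_1\sim t_2 s$, etc.), computing the product in each case. You instead avoid that case analysis entirely by a transport-of-structure argument: take an abstract free Steiner quasigroup $F$ on $\bar b$ (available since Steiner quasigroups form a variety), map it onto $\langle\bar a\rangle$ via the UMP, and use Proposition~\ref{May19_1}(2) in $F$ together with the hypothesis on $\bar a$ to show the surjection is injective, hence an isomorphism carrying the free base $\bar b$ to $\bar a$. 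Your observation that the hypothesis already forces the $a_i$ to be pairwise distinct (via the non-equivalent reduced terms $x_i$, $x_j$) is exactly the point needed to set up the correspondence, and the step from ``bijective homomorphism of algebras'' to ``isomorphism'' is unproblematic. What your approach buys is that all the combinatorial work on reduced terms is delegated to Proposition~\ref{May19_1}, at the price of invoking the existence of free objects in the variety and a (routine) argument that freeness transports along an isomorphism; the paper's proof is more self-contained and explicit about how the extension homomorphism acts, which is information it reuses in spirit elsewhere. Both arguments are sound.
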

	\begin{proof} The direction from left to right follows from  Proposition~\ref{May19_1}(2). For the other direction, assume that whenever $t$, $r$ are reduced and $t \not\sim r$ we have $t(\bar{a})\neq r(\bar{a})$. Let $N$ be a Steiner quasigroup and let $f: \bar{a} \to N$ be a map. We extend $f$ to $\hat{f} : \langle \bar{a} \rangle \to N$ as follows: for $b \in \langle \bar{a} \rangle$, let $t$ be a reduced term such that $b=t(\bar{a})$. Then
		\[ \hat{f}(b):= t(f(\bar{a})) \,.\]
		Suppose $r$ is a reduced term such that $r(\bar{a})=t(\bar{a}) = b$. Then by hypothesis $r \sim t$. Hence $\hat{f}$ is well defined. We claim that $\hat{f}$ is a homomorphism. 
		For $b_1, b_2 \in \langle \bar{a} \rangle$, choose reduced terms $t_1$, $t_2$, $r$ such that $b_i=t_i(\bar{a})$ for $i=1,2$, 
		\[ \hat{f}(b_1b_2) = \hat{f}(t_1(\bar{a})t_2(\bar{a})) \, . \]
		If $t_1t_2$ is reduced, then $\hat{f}(t_1t_2(\bar{a}))= t_1(f(\bar{a})) t_2(f(\bar{a}))= \hat{f}(b_1)\hat{f}(b_2)$. 
		Otherwise, suppose $t_1 \sim t_2$. Then 
		$t_1(\bar{a})=t_2(\bar{a})=b_1=b_2$ and 
		\[t_1(f(\bar{a}))=t_2(f(\bar{a}))=\hat{f}(b_1)=\hat{f}(b_2) \,. \]
		The homomorphism property follows.
		The other possibility is $t_1 \sim t_2s$ for a reduced term~$s$. Let $c=s(\bar{a})$, so $b_1=b_2c$. Then
		$b_1b_2=c \,$
		and 
		\[\hat{f}(b_1)\hat{f}(b_2)= \hat{f}(b_2c)\hat{f}(b_2)= [t_2(f(\bar{a}))s(f(\bar{a}))]t_2(f(\bar{a}))=s(f(\bar{a}))=\hat{f}(b_1b_2) \, . \] 
				
	\end{proof}

	\begin{cor} \label{May19_5} A tuple $\bar{a}$ is independent if and only if all its finite subtuples are independent. \end{cor}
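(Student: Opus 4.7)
The plan is to deduce this directly from the reduced-term characterisation of independence in Corollary~\ref{May19_3}, using the crucial fact that any single term $t(\bar{x})$ mentions only finitely many variables. Both directions then reduce to the observation that the value $t(\bar{a})$ depends only on the finitely many coordinates of $\bar{a}$ that correspond to variables actually occurring in $t$.

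For the forward direction, assume $\bar{a} = \langle a_i : i \in I\rangle$ is independent, and let $\bar{a}' = \langle a_i : i \in I' \rangle$ with $I' \subseteq I$ finite. Given reduced terms $t, r$ in variables indexed by $I'$ with $t \not\sim r$, they are in particular reduced terms in variables indexed by $I$, and Corollary~\ref{May19_3} applied to $\bar{a}$ gives $t(\bar{a}) \neq r(\bar{a})$. Since $t$ and $r$ mention only variables in $I'$, this is the same as $t(\bar{a}') \neq r(\bar{a}')$. Applying Corollary~\ref{May19_3} in the other direction to $\bar{a}'$ yields independence of $\bar{a}'$.

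For the backward direction, assume every finite subtuple of $\bar{a}$ is independent, and let $t, r$ be reduced terms with $t \not\sim r$. Let $I' \subseteq I$ be the finite set of indices of variables occurring in $t$ or in $r$, and set $\bar{a}' = \langle a_i : i \in I' \rangle$. By hypothesis $\bar{a}'$ is independent, so Corollary~\ref{May19_3} gives $t(\bar{a}') \neq r(\bar{a}')$, whence $t(\bar{a}) \neq r(\bar{a})$. One more application of Corollary~\ref{May19_3} yields that $\bar{a}$ is independent.

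There is essentially no obstacle: the argument is a bookkeeping reduction to finitary data, relying on the syntactic finiteness of terms and on the fact that Corollary~\ref{May19_3} reformulates independence as a statement purely about the values of reduced terms on $\bar{a}$. The only point requiring any care is the implicit assumption that one may freely view a term in finitely many variables either as a term on $\bar{a}'$ or as a term on $\bar{a}$ without altering the reducedness or the $\sim$-class, which is immediate from the definitions.
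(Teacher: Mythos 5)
Your proposal is correct and follows essentially the same route as the paper, which also derives the corollary from the reduced-term characterisation of independence in Corollary~\ref{May19_3} together with the fact that any pair of reduced terms mentions only finitely many variables. You simply spell out both directions in more detail than the paper's one-line argument.
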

	\begin{proof} By Corollary~\ref{May19_3}, independence fails for a finite subtuple $a_{i_0}, \ldots, a_{i_n}$ of $\bar{a}$ on suitable reduced terms $t$ and $r$.
	\end{proof}
	
	\begin{cor}\label{May19_4} An ascending chain of independent sets is independent.  
	\end{cor}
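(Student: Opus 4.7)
The plan is to reduce to the finite case using Corollary~\ref{May19_5} and then exploit the fact that finite subsets of a union over a chain are absorbed into a single member of the chain.

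More concretely, let $(A_i)_{i\in I}$ be an ascending chain of independent sets, with union $A=\bigcup_{i\in I} A_i$. By Corollary~\ref{May19_5}, to show that $A$ is independent it suffices to show that every finite subtuple $\bar{a}$ of $A$ is independent. Given such a finite $\bar{a}$, each of its entries lies in some $A_{i_k}$, and since the chain is linearly ordered, the finitely many indices $i_0,\ldots,i_n$ have a maximum $i$ with $\bar{a}\subseteq A_i$. Now $A_i$ is independent by hypothesis, so applying Corollary~\ref{May19_5} in the other direction (every finite subtuple of an independent set is independent) yields that $\bar{a}$ is independent.

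The only step that requires a moment of care is the assertion that a finite subtuple of an independent set is itself independent. This is the easy direction of Corollary~\ref{May19_5}, but can also be seen directly from Corollary~\ref{May19_3}: if $\bar{a}$ is independent and $\bar{a}'$ is a subtuple, then any reduced terms $t,r$ on the variables of $\bar{a}'$ are also reduced terms on the variables of $\bar{a}$ (with dummy variables), so $t\not\sim r$ forces $t(\bar{a}')\neq r(\bar{a}')$.

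I do not expect any serious obstacle here; the whole argument is a one-line application of the finite-character statement in Corollary~\ref{May19_5} combined with the trivial observation that a finite subset of the union of a chain lies in a single member of the chain.
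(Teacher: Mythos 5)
Your proof is correct and is essentially the paper's own argument: both reduce to the finite-character statement of Corollary~\ref{May19_5} together with the observation that a finite subtuple of the union of a chain lies in a single member. The only difference is presentational — you argue directly while the paper argues by contraposition — so there is nothing further to add.
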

	
	\begin{proof} Suppose $\langle A_i : i \in I \rangle$ is an ascending chain of subsets of a  Steiner quasigroup. Suppose $A=\bigcup_{i \in I} A_i$ is not independent and enumerate $A$ as $\bar{a}$. Then by Corollary~\ref{May19_5}, there is a finite $\bar{a}^\prime \subseteq \bar{a}$ that is not independent. Since $\bar{a}^\prime \subseteq A_i$ for some $i < \omega$, we have that $A_i$ is not independent.
	\end{proof}

	\begin{lemma}\label{May26_1} Let $\bar{a}$ be independent. If there are reduced terms $t(\bar{x},y)$ and $r(\bar{x},y)$ such that
		\begin{itemize}[topsep=0pt,itemsep=0pt]
			\item $t(\bar{a},b)=  r(\bar{a},b)$
			\item $t\not\sim r$
		\end{itemize}
		then there is a  reduced term $t^\prime(\bar{x},y)$ such that
		\begin{itemize}[topsep=0pt,itemsep=2pt]
			\item $t^\prime(\bar{a},b)\in\langle\bar{a}\rangle$
			\item $y$ appears in $t^\prime$.
		\end{itemize}
	\end{lemma}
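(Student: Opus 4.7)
The plan is a case analysis on where the variable $y$ appears in $t$ and $r$, combined with induction on $\rk(t)+\rk(r)$ for the hardest case.

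If $y$ appears in neither $t$ nor $r$, Corollary~\ref{May19_3} applied to $\bar{a}$ gives $t \sim r$, contradicting the hypothesis. If $y$ appears in exactly one (say $t$), then $r(\bar{a}, b) = r(\bar{a}) \in \langle\bar{a}\rangle$, so $t(\bar{a}, b) \in \langle\bar{a}\rangle$ and $t' = t$ works. So I may assume $y$ appears in both $t$ and $r$, and proceed by induction on $\rk(t)+\rk(r)$. Since $t \not\sim r$ and both contain $y$, they cannot both equal the variable $y$, so at least one, say $r$, has rank $\geq 1$ and splits as $r = r_1 \cdot r_2$.

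Using $t(\bar{a}, b) = r_1(\bar{a}, b) \cdot r_2(\bar{a}, b)$, I split into two further subcases. If $r_1(\bar{a}, b) = r_2(\bar{a}, b)$, the reducedness of $r$ gives $r_1 \not\sim r_2$, so the pair $(r_1, r_2)$ is a strictly smaller instance of the hypotheses; the inductive hypothesis (or the easy-case analysis, depending on where $y$ sits in $r_1,r_2$) produces $t'$. If $r_1(\bar{a}, b) \neq r_2(\bar{a}, b)$, then $\{t(\bar{a}, b), r_1(\bar{a}, b), r_2(\bar{a}, b)\}$ is a block in the STS, so by block symmetry $(t \cdot r_i)(\bar{a}, b) = r_{3-i}(\bar{a}, b)$. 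If $y$ is missing from some $r_i$, then $r_i(\bar{a}) \in \langle\bar{a}\rangle$ and the reduced equivalent of $t \cdot r_{3-i}$ (obtained via Lemma~\ref{May12_3}) supplies $t'$, since $t$ contributes an occurrence of $y$. Otherwise both $r_i$ contain $y$, and the pair $(r_j, t \cdot r_{3-j})$ is non-equivalent (else $r$ would not be reduced), with both terms containing $y$, so the inductive hypothesis applies.

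The main obstacle is ensuring that the induction measure strictly decreases in the distinct-partner subcase when $y$ is in both $r_1, r_2$, because $\rk(t \cdot r_{3-j})$ can equal $\rk(r)$. Picking $j$ with $\rk(r_j) < \rk(r_{3-j})$ cures this, and the boundary case $\rk(r_1) = \rk(r_2)$ is handled by instead decomposing $t$ and running the symmetric block construction on $(t, r)$, or by promoting to a lexicographic measure such as $(\max(\rk(t), \rk(r)), \rk(t)+\rk(r))$ in which the max strictly decreases. A secondary technicality is that the constructed terms $t \cdot r_i$ are not automatically reduced: their reduced equivalents must be located via Lemma~\ref{May12_3}, and I need to verify that $y$ still occurs in those equivalents (which it does, since it occurs in $t$).
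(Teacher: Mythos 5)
Your opening reductions (eliminating the cases where $y$ occurs in neither or only one of $t,r$) match the paper exactly, and the block-symmetry idea $(t\cdot r_i)(\bar{a},b)=r_{3-i}(\bar{a},b)$ is in the right spirit. But the induction you set up is not well-founded, and the repairs you propose do not fix it. Passing from the pair $(t,r)$ with $r=r_1\cdot r_2$ to the pair $(r_j,\,t\cdot r_{3-j})$ gives $\rk(t\cdot r_{3-j})=\max(\rk(t),\rk(r_{3-j}))+1$, so when $\rk(r_1)=\rk(r_2)$ the sum $\rk(t)+\rk(r)$ does not drop, and the lexicographic measure $(\max(\rk(t),\rk(r)),\rk(t)+\rk(r))$ actually \emph{increases}: for instance with $\rk(t)=\rk(r)=3$ and $\rk(r_1)=\rk(r_2)=2$ the new pair has ranks $2$ and $4$, so the maximum goes from $3$ to $4$. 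Decomposing $t$ instead suffers the same failure when $\rk(t_1)=\rk(t_2)$. Worse, nothing in your rule for choosing which term to split prevents an outright cycle: from $(t,\,r_1\cdot r_2)$ you pass to $(r_1,\,t\cdot r_2)$, and splitting the second component of that pair returns you to $(t,\,r_1\cdot r_2)$. A secondary but genuine problem is your parenthetical claim that $y$ survives in the reduced equivalent of $t\cdot r_{3-i}$ ``since it occurs in $t$'': if, say, $t\sim r_{3-i}\cdot s$, the product collapses to $s$, and $y$ need not occur in $s$; ruling that out requires an extra argument (if $y\notin s$ then $s(\bar{a})=r_i(\bar{a})$ forces $s\sim r_i$ by Proposition~\ref{May19_1} and hence $t\sim r$, a contradiction), not an appeal to $t$ alone. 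Lemma~\ref{May12_3} does not ``locate reduced equivalents''; it only tells you the three specific ways a product of reduced terms can fail to be reduced, and each must be analysed.

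The paper sidesteps both issues by fixing a variable $x_i$ of $\bar{x}$ occurring in $t$, writing $t=t_1\cdot t_2$ with $x_i$ in $t_1$, and rewriting the equation as $t_1(\bar{a},b)=(r\cdot t_2)(\bar{a},b)$. The induction is structural on $t$, and in every recursive call the designated first component is a proper subterm $t_1$ of $t$, while the partner term ($r\cdot t_2$ when it is reduced, or else $t_2$ or the residual term $s$ arising from the three failure modes of Lemma~\ref{May12_3}(6)) is handled explicitly and never re-expanded. If you want to keep your $r$-side decomposition, you would need to impose the same discipline --- designate one component that strictly shrinks as a subterm at every step and never split the other --- at which point you have essentially reconstructed the paper's argument.
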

	\begin{proof} Let $\bar{a}= a_1,\ldots,a_n$ and $\bar{x}= x_1,\ldots,x_n$. Note that $y$ appears in $t$ or in $r$, since otherwise  $t\sim r$  by Corollary~\ref{May19_3}.  We may assume that $y$ appears in $t$ and in $r$, because if, say, $y$ appears only in $t$, then $t^\prime= t$ gives the result. Moreover some $x_i$ appears in $t$  or in $r$, because otherwise $t=y=r$ and then $t\sim r$. Without loss of generality, $x_i$ appears in $t$. We argue by induction on $t$.
		
		The case where $t$ is a variable is impossible, because then $t=x_i =y $.
		
		Assume $t=t_1\cdot t_2$.  Without loss of generality, $x_i$ appears in $t_1$.  We have
		$$t_1(\bar{a},b)= r(\bar{a},b)\cdot t_2(\bar{a},b)$$
		If $t_1\sim r\cdot t_2$, then $t= t_1\cdot t_2\sim (r\cdot t_2)\cdot t_2$ and $t$ is not reduced. Hence $t_1\not\sim r\cdot t_2$. If $r\cdot t_2$ is reduced, we can apply the induction hypothesis. Therefore, we may assume $r\cdot t_2$ is not reduced. By Lemma~\ref{May12_3}\,(6), there are three cases.
		
		\underline{Case 1}: \ $r\sim t_2$.  In this situation we have
		$$t_1(\bar{a},b)= r(\bar{a},b)\cdot t_2(\bar{a},b)= t_2(\bar{a},b)\cdot t_2(\bar{a},b) =t_2(\bar{a},b)$$
		and $t_1\not\sim t_2$ (because otherwise $t$ is not reduced). Since $y$ appears in $t_1$ or in $t_2$, we may apply the induction hypothesis to get the result.
		
		\underline{Case 2}: \  $r\sim t_2\cdot s$  for some term $s$. Then $s$ is reduced and $t_1(\bar{a},b)= s(\bar{a},b)$. If $t_1\sim s$, then $t=t_1\cdot t_2 \sim s\cdot t_2 \sim r$, a contradiction. Hence, $t_1\not\sim s$. There are two subcases. Subcase 2.1 is that $y$ appears in $s$. In this situation, we can apply the induction hypothesis to the equality $t_1(\bar{a},b)= s(\bar{a},b)$.  Subcase 2.2 is that $y$  does not appear in $s$. Then  $t_1(\bar{a},b)\in\langle\bar{a}\rangle$. If, moreover, $y$ appears in $t_1(\bar{x},y)$, we are done. Otherwise, $t_1= t_1(\bar{x})$, $s = s(\bar{x})$ and  $t_1(\bar{a})= s(\bar{a})$. By Proposition~\ref{May19_1}, $t_1\sim s$. Then  $t= t_1\cdot t_2\sim s\cdot t_2\sim r$, contradicting $t\not\sim r$.
		
		\underline{Case 3}: \ $t_2\sim r\cdot s$ for some term $s$. Then, again, $t_1(\bar{a},b)= s(\bar{a},b)$ and $s$ is reduced. There are again two subcases, according to whether $y$ appears in $s$ or not. The first subcase is like subcase 2.1. In the second subcase we assume that $y$ does not appear in $s$. As in subcase 2.2, we see that $t_1\sim s$. Then $t= t_1\cdot t_2 \sim s\cdot (r\cdot s)$ and $t$ is not reduced, a contradiction.
	\end{proof}

	\begin{prop}\label{May26_2} Let $\bar{a}$  be independent. Then  $\bar{a}b$ is independent if and only if  for every reduced term $t(\bar{x},y)$ containing $y$, we have  $t(\bar{a},b)\not\in\langle \bar{a}\rangle$.
	\end{prop}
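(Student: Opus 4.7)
The plan is to derive both directions from Corollary~\ref{May19_3} and Lemma~\ref{May26_1}, which were set up precisely for this purpose.

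For the forward direction, I assume $\bar{a}b$ is independent and, for a contradiction, that some reduced term $t(\bar{x},y)$ in which $y$ occurs satisfies $t(\bar{a},b)\in\langle\bar{a}\rangle$. Since $\bar{a}$ is independent, $\langle\bar{a}\rangle$ is freely generated by $\bar{a}$, so by Lemma~\ref{May12_4}(2) there is a reduced term $s(\bar{x})$ with $t(\bar{a},b)=s(\bar{a})$. Regarding $s$ as a reduced term $s(\bar{x},y)$ that happens not to contain $y$, we have $t\not\sim s$ because $\sim$ preserves the set of variables appearing in a term. Applying Corollary~\ref{May19_3} to the independent tuple $\bar{a}b$ then gives $t(\bar{a},b)\neq s(\bar{a})$, contradicting the choice of $s$.

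For the backward direction, assume $t(\bar{a},b)\not\in\langle\bar{a}\rangle$ for every reduced $t(\bar{x},y)$ containing $y$. By Corollary~\ref{May19_3}, to show that $\bar{a}b$ is independent it suffices to verify that $t(\bar{a},b)\neq r(\bar{a},b)$ whenever $t(\bar{x},y)$ and $r(\bar{x},y)$ are reduced with $t\not\sim r$. Suppose for contradiction that $t(\bar{a},b)=r(\bar{a},b)$ for such $t,r$. If $y$ appears in neither $t$ nor $r$, both reduce to reduced terms in $\bar{x}$ giving $t(\bar{a})=r(\bar{a})$; since $\bar{a}$ is independent, Corollary~\ref{May19_3} forces $t\sim r$, a contradiction. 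Otherwise, Lemma~\ref{May26_1} applies and supplies a reduced term $t'(\bar{x},y)$ in which $y$ occurs and with $t'(\bar{a},b)\in\langle\bar{a}\rangle$, directly contradicting the assumption.

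The proof is essentially frictionless given the earlier machinery: the main conceptual obstacle has already been absorbed into Lemma~\ref{May26_1}, whose inductive analysis of reduced terms provides precisely the witness $t'$ needed here. The only subtlety is the small observation in the forward direction that equivalent reduced terms must mention the same variables, which is immediate from the definition of $\sim$ as a composition of commutativity rewrites on subterms.
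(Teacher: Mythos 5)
Your proof is correct and follows essentially the same route as the paper: the forward direction uses the fact that a reduced term equal on $\bar{a}b$ to a reduced term in $\bar{x}$ alone must be $\sim$-equivalent to it (hence contain the same variables), and the backward direction invokes Corollary~\ref{May19_3} to produce non-equivalent reduced terms agreeing at $\bar{a}b$ and then Lemma~\ref{May26_1} to extract the witness $t'$. The only cosmetic difference is that you phrase the forward direction via Corollary~\ref{May19_3} where the paper cites Proposition~\ref{May19_1}(2) directly; these are the same fact.
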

	\begin{proof} From left to right: assume $\bar{a}b$ is independent and  $t(\bar{a},b)\in\langle \bar{a}\rangle$  for some reduced term containing the variable $y$. Then $t(\bar{a},b)= r(\bar{a})$  for some reduced term $r(\bar{x})$. By Proposition~\ref{May19_1}, $t\sim r$. But this implies that $y$ appears in $r$, a contradiction.
		
		From right to left: assume $\bar{a}b$ is not independent. By Corollary~\ref{May19_3}, there are reduced terms $t(\bar{x},y)$ and $r(\bar{x},y)$ such that  $t(\bar{a},b)=r(\bar{a},b)$  and $t\not\sim r$. Since $\bar{a}$ is independent, the same corollary shows that $y$ appears in $t$ or in $r$. By Lemma~\ref{May26_1}, there is a reduced term $t^\prime(\bar{x},y)$ such that  $y$ appears in $t^\prime$ and   $t^\prime(\bar{a},b)\in\langle\bar{a}\rangle$, which contradicts the right hand side.
	\end{proof}
	
	\begin{cor}\label{May26_3} Let $\bar{a}b$  be independent, let $t(\bar{x},y)$ be a reduced term containing $y$ and let  $f:\langle\bar{a}b\rangle \rightarrow \langle\bar{a},t(\bar{a},b)\rangle$  be the surjective homomorphism induced by  $f(\bar{a})=\bar{a}$  and  $f(b)=t(\bar{a},b)$. The following are equivalent:
		\begin{enumerate}
			\item $f$ is one-to-one
			\item $\bar{a},t(\bar{a},b)$ is independent
			\item for every reduced term $r(\bar{x},y)$ containing $y$,  $r(\bar{a},t(\bar{a},b))\not\in\langle\bar{a}\rangle$.
		\end{enumerate}
	\end{cor}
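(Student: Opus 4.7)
The plan is to prove the three implications $(2) \Leftrightarrow (3)$, $(1) \Rightarrow (2)$ and $(2) \Rightarrow (1)$ separately, since most of the real work has already been carried out in Proposition~\ref{May26_2} and in the UMP characterisation of independence developed in Section~2. A preliminary remark is that the map $f$ in the statement is well defined: since $\bar{a}b$ is independent, $\langle\bar{a}b\rangle$ has UMP over $\bar{a}b$, so the assignment $\bar{a}\mapsto\bar{a}$, $b\mapsto t(\bar{a},b)$ extends uniquely to a homomorphism, whose image is generated by $\bar{a},t(\bar{a},b)$.

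For the equivalence $(2) \Leftrightarrow (3)$ I would first note that $\bar{a}$ is independent, being a subtuple of the independent tuple $\bar{a}b$ (by Corollary~\ref{May19_5}). Then I can apply Proposition~\ref{May26_2} to the tuple $\bar{a}$ extended by the single element $t(\bar{a},b)$ in place of $b$; the two conditions in that proposition are precisely $(2)$ and $(3)$ here, so the equivalence is immediate.

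For $(1) \Rightarrow (2)$ I would use that $f$ is surjective by construction, so if it is also injective it is an isomorphism $\langle\bar{a}b\rangle \to \langle\bar{a},t(\bar{a},b)\rangle$. The image of a free base under an isomorphism is again a free base (by transport of the UMP), hence $\bar{a},t(\bar{a},b)$ is independent. For the converse $(2) \Rightarrow (1)$, assuming $\bar{a},t(\bar{a},b)$ is independent I will invoke the UMP of $\langle\bar{a},t(\bar{a},b)\rangle$ to produce a homomorphism $g:\langle\bar{a},t(\bar{a},b)\rangle \to \langle\bar{a}b\rangle$ sending $\bar{a}\mapsto\bar{a}$ and $t(\bar{a},b)\mapsto b$. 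The composition $g\circ f$ is then an endomorphism of $\langle\bar{a}b\rangle$ that fixes the free base $\bar{a}b$ pointwise, so by uniqueness of homomorphic extensions from a free base it must equal the identity, which forces $f$ to be injective.

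I do not expect any real obstacle in carrying out this plan: the combinatorial heart of the argument has already been absorbed into Proposition~\ref{May26_2} (itself relying on Lemma~\ref{May26_1} and Proposition~\ref{May19_1}), and the only remaining ingredients are formal manipulations with the UMP. The one point that requires a little care is making sure that the homomorphism $g$ in the last step is defined on all of $\langle\bar{a},t(\bar{a},b)\rangle$ and not only on the generators, but this is guaranteed by the UMP once condition $(2)$ is in force.
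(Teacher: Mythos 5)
Your proposal is correct and follows essentially the same route as the paper: the equivalence $(2)\Leftrightarrow(3)$ is obtained directly from Proposition~\ref{May26_2}, $(1)\Rightarrow(2)$ by transporting the free base along the isomorphism, and $(2)\Rightarrow(1)$ by producing the inverse-on-generators homomorphism $g$ via the UMP and using uniqueness of extensions to conclude $g\circ f=\mathrm{id}$. No substantive differences.
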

	\begin{proof} 2 $\Leftrightarrow$ 3 follows from Proposition~\ref{May26_2}.
		
		1 $\Leftrightarrow$ 2:  if $f$ is one-to-one, then it is an isomorphism between $\langle\bar{a}b\rangle $  and $\langle\bar{a},t(\bar{a},b)\rangle$, which implies the independence of  $\bar{a},t(\bar{a},b)$. 
		
		For the other direction, assume  $\bar{a},t(\bar{a},b)$ is independent. Then  $g:\langle \bar{a},t(\bar{a},b)\rangle \rightarrow \langle \bar{a},b\rangle$  defined by  
		\[g(\bar{a})=\bar{a} \mbox{ and }g(t(\bar{a},b))= b \]
		 is a surjective homomorphism. Then  $g\circ f:\langle \bar{a},b\rangle \rightarrow \langle\bar{a},b\rangle$ is a homomorphism and it is the identity on $\bar{a}b$. By uniqueness of the induced homomorphism, $g\circ f$ is the identity, which implies that $f$ is one-to-one.
	\end{proof}
	
	\section{Endomorphisms}
	
We give a number of results that concern the syntax of terms of the form $t(\bar{x},y)$. When $\bar{a}b$ is an independent tuple in a Steiner quasigroup $M$, these results allow us to give conditions under which the mapping $\bar{a} \mapsto \bar{a}$, $b \mapsto t(\bar{a},b)$ induces an automorphism of $\langle \bar{a}b \rangle$, or an embedding or an isomorphism between the subquasigroups $\langle\bar{a}b \rangle$ and $\langle \bar{a}, \, t(\bar{a},b) \rangle$ of~$M$.
	
	\begin{df} A subterm $r$ of $t$ is {\bf single} if for any subterms  $r_1\cdot t_1$ and $r_2\cdot t_2$ of $t$ (or $r_1\cdot t_1$, $t_2\cdot r_2$,  or  $t_1\cdot r_1$, $r_2\cdot t_2$ or  $t_1\cdot r_1$, $t_2\cdot r_2$)  with  $r_1\sim r_2\sim r$  we have  $t_1\sim t_2$. Otherwise we call it {\bf double}.
	\end{df}
	
	\begin{lemma}\label{June9_1} Let $t= t(\bar{x},y)$ be reduced. Then  $y$ has an occurrence in $t$ such that every subterm of $t$ containing it is single if and only if  $t$ has only one occurrence of $y$.
	\end{lemma}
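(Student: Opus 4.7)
Plan: I would prove both directions, using two basic facts about $\sim$: it preserves the rank of a term (Lemma~\ref{May12_3}(3)) and the multiset of variable occurrences (immediate from the definition).

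For the direction~($\Leftarrow$), suppose $y$ has exactly one occurrence $\alpha$ in~$t$. The subterms of~$t$ containing~$\alpha$ form a chain from the leaf~$y$ up to the root~$t$, along which rank strictly increases; in particular at each fixed rank there is at most one subterm on this chain. I claim any subterm~$s$ of~$t$ containing~$\alpha$ is single. If $r_1\cdot t_1$ and $r_2\cdot t_2$ are product-subterms of~$t$ (in any of the four sidedness configurations in the definition) with $r_1\sim r_2\sim s$, then each of $r_1,r_2$ contains an occurrence of~$y$ by variable-preservation, hence contains the unique~$\alpha$, and so lies on the chain. Having the same rank as~$s$, they coincide with~$s$ as subterm occurrences of~$t$. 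But then $r_1\cdot t_1$ and $r_2\cdot t_2$ are both the unique parent of this occurrence (if $s=t$ the hypothesis is vacuously empty), so $t_1=t_2$ as subterm occurrences; in particular $t_1\sim t_2$.

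For the direction~($\Rightarrow$) I would argue contrapositively. Suppose $t$ has two distinct occurrences $\alpha\neq\beta$ of~$y$, and fix~$\alpha$. I walk up from each leaf: let $u_0=\alpha$, $v_0=\beta$, and for $i\geq 1$ let $u_i$ (resp.\ $v_i$) be the parent of $u_{i-1}$ (resp.\ $v_{i-1}$) as a subterm occurrence of~$t$, writing $u_i=u_{i-1}\cdot w_{i-1}$ and $v_i=v_{i-1}\cdot z_{i-1}$ (with factors possibly swapped, which does not affect~$\sim$). Consider the largest~$i^{*}$ for which $u_{i^{*}}$ and $v_{i^{*}}$ are distinct subterm occurrences of~$t$ with $u_{i^{*}}\sim v_{i^{*}}$. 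Such an~$i^{*}$ exists because $u_0=y=v_0$ with $\alpha\neq\beta$, and it is bounded: either the two chains eventually merge at the least common ancestor~$c$, in which case the two subterms one step below the merge are the children of~$c$, which by reducedness of~$t$ are not $\sim$-equivalent; or the chains have unequal length, in which case once one of them reaches~$t$ the other is at strictly smaller rank and so not $\sim$-equivalent.

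At index $i^{*}+1$ the occurrences $u_{i^{*}+1}$ and $v_{i^{*}+1}$ are still defined, are distinct subterm occurrences of~$t$, and by maximality of~$i^{*}$ we have $u_{i^{*}+1}\not\sim v_{i^{*}+1}$. Lemma~\ref{May12_3}(1) combined with $u_{i^{*}}\sim v_{i^{*}}$ then forces $w_{i^{*}}\not\sim z_{i^{*}}$. Thus $u_{i^{*}}$ and $v_{i^{*}}$ are two distinct subterm occurrences of~$t$, mutually $\sim$-equivalent, whose siblings in their parent products are not $\sim$-equivalent; this exhibits~$u_{i^{*}}$ as not single, and it visibly contains~$\alpha$. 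The main obstacle I anticipate is the bookkeeping between subterm \emph{occurrences} in~$t$ and terms up to~$\sim$: the first direction collapses the $\sim$-class of~$s$ to a single occurrence via rank and uniqueness of~$\alpha$, while the second leverages the two occurrences of~$y$ to exhibit two distinct occurrences within a single $\sim$-class, with some care needed when $\alpha$ and $\beta$ lie at different depths so that the two walking chains reach the root or the common ancestor at different indices.
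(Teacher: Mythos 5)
Your proof is correct, and the two directions have different relationships to the paper's argument. The ($\Leftarrow$) direction is essentially the paper's: the unique occurrence forces any $r_1,r_2$ with $r_1\sim r_2\sim s$ onto the single leaf-to-root chain, where equal length collapses them to the same occurrence as $s$, so the two product-subterms are the unique parent and $t_1\sim t_2$ is automatic. For ($\Rightarrow$) you take a genuinely different route. The paper argues directly: it fixes the good occurrence, forms the chain $y=t_1,\ldots,t_n=t$ with $t_{i+1}=t_i\cdot r_i$, and proves by a nested induction (using singleness of each $t_j$ together with reducedness) that no subterm of any sibling $r_i$ is $\sim$-equivalent to any $t_j$ with $j\leq i$, whence $y$ cannot recur. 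You instead argue contrapositively via an extremal principle on two parallel ancestor chains: at the last index $i^{*}$ where the two ancestor occurrences are distinct but $\sim$-equivalent, the parents are distinct occurrences (else a subterm $t_1t_2$ with $t_1\sim t_2$ contradicts reducedness), non-equivalent by maximality, and therefore have non-equivalent siblings, which exhibits $u_{i^{*}}$ as double; since $\alpha$ was an arbitrary occurrence, no occurrence can be witnessed as in the left-hand side. Your version is more symmetric and avoids the paper's two-stage induction, while the paper's direct argument yields the slightly stronger structural fact that no chain term reappears inside any sibling. Two small points: the step ``$u_{i^{*}+1}\not\sim v_{i^{*}+1}$ and $u_{i^{*}}\sim v_{i^{*}}$ force $w_{i^{*}}\not\sim z_{i^{*}}$'' really uses that $\sim$ is a congruence for the product (immediate from its definition), not Lemma~\ref{May12_3}(1), which is the converse decomposition statement; and the maximum $i^{*}$ exists simply because the two chains are finite and index $0$ qualifies, with your length argument then guaranteeing both parents are still defined at $i^{*}+1$.
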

	\begin{proof} Assume $y$ has an occurrence in $t$ such that every subterm of $t$ containing it is single.  By looking at the syntactical labelled tree of construction of $t$ and at the branch starting with this occurrence of $y$, we see that there are terms $t_1,\ldots,t_n$  such that
		\begin{itemize}
			\item $y = t_1$
			\item $t= t_n$
			\item $t_{i+1}= t_i\cdot r_i$  or  $t_{i+1}= r_i\cdot t_i$ for some term $r_i$  if $i\geq 1$
			\item each $t_i$ is single.
		\end{itemize}
		We now claim that if $j\leq i$, then there is no subterm $r$ of $r_i$  such that $t_j\sim r$. This claim implies that $y$ does not appear in any $r_i$ and hence that $y$ has only one occurrence in $t$. 
		
		First consider the case $j=i$. On one hand, since $t$ is reduced, $t_i\not\sim r_i$; on the other hand, $r_i$ does not have a term of the form  $s_1\cdot s_2$  with $t_i\sim s_1$ or $t_i\sim s_2$,  because, since $t_i$ is single, this would imply $s_1\sim r_i$ or $s_2\sim r_i$, which is impossible since  $s_1,s_2$  are proper subterms of $r_i$.
		
		We finish the proof by showing that if there is some subterm $s$ of $r_i$ such that $s\sim t_j$ for some $j < i$, then there is also a subterm $s^\prime$ of $r_i$ such that $s^\prime\sim t_{j+1}$:  if $t_j\sim r_i$, then $r_j\sim t_i$, which is impossible; and if  $t_j\sim s$ for a proper subterm $s$ of $r_i$, then for some term $s^\prime$, $s\cdot s^\prime$  (or $s^\prime\cdot s$) is a subterm of $r_i$ and then  $s^\prime\sim r_j$ and $t_{j+1}\sim t_j\cdot r_j \sim s\cdot s^\prime$, which is a subterm of $r_i$.
		
		For the other direction, assume that $t$ has only one occurrence of $y$. We claim that every subterm $r$ of $t$ containing the occurrence of $y$ must be single. For this, assume $r_1\cdot t_1$ and $r_2\cdot t_2$ are subterms of $t$  with  $r_1\sim r_2\sim r$. Then $r_1$ and $r_2$ contain $y$. We show that $t_1\sim t_2$.  Since each of $r,r_1,r_2$ must be a subterm of the other two, it follows that $r=r_1=r_2$. Then $r\cdot t_1$ is a subterm of $r\cdot t_2$ (or viceversa), which is impossible unless $t_1=t_2$.   Similarly if the subterms of $t$ are $r_1\cdot t_1$, $t_2\cdot r_2$,  or  $t_1\cdot r_1$, $r_2\cdot t_2$ or  $t_1\cdot r_1$, $t_2\cdot r_2$.
\end{proof}
	
	\begin{lemma}\label{June9_2} If $t= t(\bar{x},y)$ has only one occurrence of $y$, then there is a term $r(\bar{x},z)$  such that the equations  $t(\bar{x},y)= z$   and   $r(\bar{x},z)= y$  are equivalent in every Steiner quasigroup.
	\end{lemma}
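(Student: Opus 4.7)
The plan is to induct on the structure (equivalently, the rank) of $t$, using the Steiner quasigroup axiom $x\cdot(x\cdot y)=y$ to ``solve'' for $y$ one level at a time. The key algebraic fact, holding in every Steiner quasigroup, is the logical equivalence
\[ a\cdot b = c \ \Longleftrightarrow\ a = b\cdot c, \]
since from $c=a\cdot b$ one gets $b\cdot c = b\cdot(b\cdot a) = a$ (using commutativity and axiom~3), and conversely from $a=b\cdot c$ one gets $a\cdot b = (b\cdot c)\cdot b = b\cdot(b\cdot c)=c$.

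The base case is $t=y$, for which $r(\bar{x},z):=z$ obviously works. For the inductive step, write $t=t_1\cdot t_2$; the hypothesis that $y$ occurs exactly once in $t$ forces $y$ to occur in exactly one of $t_1,t_2$, say in $t_1$ (the case of $t_2$ being symmetric by commutativity). Then $t_2=t_2(\bar{x})$ contains no $y$, and $t_1=t_1(\bar{x},y)$ contains $y$ exactly once, so the induction hypothesis applies to $t_1$. By the equivalence above, in every Steiner quasigroup
\[ t(\bar{x},y) = z \iff t_1(\bar{x},y)\cdot t_2(\bar{x}) = z \iff t_1(\bar{x},y) = t_2(\bar{x})\cdot z. \]

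By induction applied to $t_1$ (with a fresh variable $z'$), there is a term $r_1(\bar{x},z')$ such that $t_1(\bar{x},y)=z'$ and $r_1(\bar{x},z')=y$ are equivalent in every Steiner quasigroup. Substituting $z'\mapsto t_2(\bar{x})\cdot z$, define
\[ r(\bar{x},z) := r_1\!\bigl(\bar{x},\, t_2(\bar{x})\cdot z\bigr). \]
Combining the two equivalences gives $t(\bar{x},y)=z \iff r(\bar{x},z)=y$, as required.

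There is no real obstacle: the argument is a straightforward structural induction. The only small point to be careful about is that the inductive hypothesis must be applied to $t_1$, which is syntactically a reduced-structure term with exactly one occurrence of $y$ (the single-occurrence condition is inherited by any subterm containing the unique occurrence). Everything else is mechanical substitution, and the equivalence used in the induction step is an immediate consequence of the Steiner quasigroup axioms, so the proof fits in a few lines.
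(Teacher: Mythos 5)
Your proof is correct and follows essentially the same route as the paper's: structural induction on $t$, with the base case $t=y$ giving $r=z$, and the inductive step rewriting $t_1(\bar{x},y)\cdot t_2(\bar{x})=z$ as $t_1(\bar{x},y)=t_2(\bar{x})\cdot z$ and substituting $t_2(\bar{x})\cdot z$ into the term obtained for $t_1$. The only difference is that you spell out the justification of the equivalence $a\cdot b=c\iff a=b\cdot c$ from the quasigroup axioms, which the paper leaves implicit.
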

	\begin{proof} Induction on $t$.  If $t$ is a variable, $t= y$  and we can take  $r= z$.  If $t=t_1\cdot t_2$, without loss of generality, $y$ appears in $t_1$ and not in $t_2$. Then $t(\bar{x},y)=z$ is equivalent to  $t_1(\bar{x},y)= t_2(\bar{x})\cdot z$. By induction hypothesis, there is a term $r(\bar{x},u)$  for which  $t_1(\bar{x},y)= u$  is equivalent to  $r(\bar{x},u)= y$.  Then  $t_1(\bar{x},y)= t_2(\bar{x})\cdot z$  is equivalent to  $r(\bar{x},t_2(\bar{x})\cdot z)= y$.
	\end{proof}
	
	\begin{prop}\label{June9_3} If $t= t(\bar{x},y)$ has only one occurrence of $y$ and $\bar{a}b$ is independent, where $\bar{a}=a_1,\ldots,a_n$, then the mapping induced by
		\begin{itemize}
			\item $a_i\mapsto a_i$  for $i=1,\ldots,n$
			\item $b\mapsto t(\bar{a},b)$
		\end{itemize}
		is an automorphism of  $\langle \bar{a}b\rangle$.
	\end{prop}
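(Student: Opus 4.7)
The plan is to produce the desired automorphism by first extending the assignment to an endomorphism of $\langle\bar{a}b\rangle$, then using Lemma~\ref{June9_2} to witness surjectivity, and finally invoking Corollary~\ref{May11_9.2} to upgrade surjectivity to bijectivity.

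First I would note that since $\bar{a}b$ is independent, $\langle\bar{a}b\rangle$ is freely generated by the tuple $\bar{a}b$. Thus, by the universal mapping property of $\langle\bar{a}b\rangle$ applied to the codomain $\langle\bar{a}b\rangle$ itself, the assignment $\phi\colon a_i\mapsto a_i$, $b\mapsto t(\bar{a},b)$ extends to a (unique) endomorphism $\hat\phi\colon\langle\bar{a}b\rangle\to\langle\bar{a}b\rangle$.

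Next I would use Lemma~\ref{June9_2}: since $t(\bar{x},y)$ has only one occurrence of $y$, there exists a term $r(\bar{x},z)$ such that, in every Steiner quasigroup, $t(\bar{x},y)=z$ is equivalent to $r(\bar{x},z)=y$. Specialising in $M$ with the values $\bar{x}:=\bar{a}$, $y:=b$ and $z:=t(\bar{a},b)$, the trivial identity $t(\bar{a},b)=t(\bar{a},b)$ gives $r(\bar{a},t(\bar{a},b))=b$. Therefore $b=r(\hat\phi(\bar{a}),\hat\phi(b))=\hat\phi(r(\bar{a},b))$ lies in the image of $\hat\phi$, and since $\bar{a}$ is fixed by $\hat\phi$, the image contains the generating set $\bar{a}b$ of $\langle\bar{a}b\rangle$. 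Thus $\hat\phi$ is surjective.

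Finally, since $\bar{a}b$ is a finite independent tuple, $\langle\bar{a}b\rangle$ is a finitely generated free Steiner quasigroup (with free base $\bar{a}b$), so Corollary~\ref{May11_9.2} applies: every surjective endomorphism of a finitely generated free Steiner quasigroup is an automorphism. Hence $\hat\phi$ is an automorphism of $\langle\bar{a}b\rangle$, as required.

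There is no real obstacle here: the work has already been done in Lemmas~\ref{June9_1}--\ref{June9_2} (where the single-occurrence hypothesis was exploited to invert $t$ in the variable $y$) and in Corollary~\ref{May11_9.2} (where Hopfianness of finitely generated free algebras was established). The only point that requires a moment of care is making sure we are applying Lemma~\ref{June9_2} in the correct direction, i.e.\ reading the equation $t(\bar{a},b)=t(\bar{a},b)$ as a statement that forces $b$ to equal $r(\bar{a},t(\bar{a},b))$; this is where the uniqueness of the solution $y$ to the equation $t(\bar{a},y)=t(\bar{a},b)$ implicit in Lemma~\ref{June9_2} is used.
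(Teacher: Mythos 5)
Your proof is correct. The surjectivity half coincides with the paper's: both use Lemma~\ref{June9_2} to get $r(\bar{a},t(\bar{a},b))=b$ and conclude that the image of the induced endomorphism is a substructure containing the generating set $\bar{a}b$, hence everything. Where you genuinely diverge is injectivity. The paper proves, by induction on the term $t$, that there is an explicit homomorphism $g$ with $g(\bar{a})=\bar{a}$ and $g(t(\bar{a},b))=b$, so that $g\circ f$ is the identity and $f$ is one-to-one; you instead invoke the Hopfian property of finitely generated free Steiner quasigroups (Corollary~\ref{May11_9.2}) to upgrade surjectivity to bijectivity. Your route is shorter and eliminates the induction entirely, at the price of resting on Corollary~\ref{May11_9.2}, which in the paper is derived from the cited J\'onsson--Tarski results rather than proved from scratch; the paper's route is self-contained at this point, exhibits the inverse concretely, and would still go through if one wanted a version of the statement for an infinite tuple $\bar{a}$, where the finite-generation hypothesis of the Hopfian argument fails. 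For the statement as given (with $\bar{a}=a_1,\ldots,a_n$ finite), both arguments are complete and valid.
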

	\begin{proof} Since $\bar{a}b$ is independent, there is a homomorphism $f:\langle\bar{a}b\rangle\rightarrow \langle\bar{a}b\rangle$  such that  $f(\bar{a}) = \bar{a}$    and  $f(b)= t(\bar{a},b)$.  We claim that there is another homomorphism $g:\langle\bar{a}b\rangle\rightarrow \langle\bar{a}b\rangle$  such that  $g(\bar{a}) = \bar{a}$   and  $g(t(\bar{a},b))= b$. Then  $g\circ f:\langle\bar{a}b\rangle \rightarrow \langle\bar{a}b\rangle$ is a homomorphism  such that  $g\circ f(\bar{a})= \bar{a}$    and $g\circ f(b)=b$. Hence $g\circ f$ is the identity on $\langle\bar{a}b\rangle$, which implies that $f$ is one-to-one. Lemma~\ref{June9_2}  shows that $b\in\langle \bar{a}, t(\bar{a},b)\rangle$, which is the range of $f$. Hence $f$ is surjective.
		
		We prove the claim by induction on $t$.  If  $t$ is a single variable, it must be  $t= y$ and then $f(b)=b$  and we take $g=f$.  Assume $t= t_1\cdot t_2$. Without loss of generality, $y$ occurs in $t_1$ and not in $t_2$.  The induction hypothesis gives a homomorphism $h:\langle\bar{a}b\rangle\rightarrow\langle \bar{a}b\rangle$ such that  $h(\bar{a})= \bar{a}$ and $h(t_1(\bar{a},b))= b$. Let $a= t_2(\bar{a})$ and note that  $h(a)=a$  and $h(t(\bar{a},b))= h(t_1(\bar{a},b))\cdot h(t_2(\bar{a}))= b\cdot a$. Let $j:\langle\bar{a}b\rangle\rightarrow\langle \bar{a}b\rangle$ be the homomorphism induced by  $j(\bar{a})=  \bar{a}$   and $j(b) =b\cdot a$. Then  $j(b\cdot a)= (b\cdot a)\cdot a =b$ and hence  $j\circ h(t(\bar{a},b))= j(b\cdot a) = b$   and  $j\circ f(\bar{a})= \bar{a}$. Therefore $g= j\circ h$ satisfies the requirements for $t$.
	\end{proof}
	
	\begin{cor}\label{June9_4}  If $t= t(\bar{x},y)$  is reduced and  $y$ has an occurrence in $t$ such that every subterm of $t$ containing this occurrence is single,
		and $\bar{a}b$ is independent, where $\bar{a}=a_1,\ldots,a_n$, then the mapping induced by
		\begin{itemize}
			\item $a_i\mapsto a_i$  for $i=1,\ldots,n$
			\item $b\mapsto t(\bar{a},b)$
		\end{itemize}
		is an automorphism of  $\langle \bar{a}b\rangle$.
	\end{cor}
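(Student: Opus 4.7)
The plan is essentially to observe that this corollary is an immediate combination of the two preceding results, Lemma~\ref{June9_1} and Proposition~\ref{June9_3}. The only work to do is to verify that the hypotheses line up.

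First I would invoke Lemma~\ref{June9_1}. Its statement is an ``iff'': for a reduced term $t(\bar{x},y)$, the existence of an occurrence of $y$ in $t$ such that every subterm of $t$ containing that occurrence is single is equivalent to $t$ containing exactly one occurrence of $y$. The hypothesis of the corollary gives us the left-hand side of this equivalence (together with the fact that $t$ is reduced), so we immediately obtain that $t$ has a unique occurrence of $y$.

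Next I would apply Proposition~\ref{June9_3} directly to $t$ and the independent tuple $\bar{a}b$. That proposition requires exactly two hypotheses: that $t(\bar{x},y)$ has only one occurrence of $y$, and that $\bar{a}b$ is independent. Both are now in hand --- the first via Lemma~\ref{June9_1} and the second by assumption. Its conclusion is precisely that the mapping $a_i \mapsto a_i$ ($i=1,\dots,n$) and $b \mapsto t(\bar{a},b)$ induces an automorphism of $\langle \bar{a}b\rangle$, which is what we need.

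Because the proof is just this two-step combination, there is no real obstacle; the substantive content has been done in Lemma~\ref{June9_1} (converting the subterm-singleness condition into a syntactic statement about the number of occurrences of $y$) and in Proposition~\ref{June9_3} (using Lemma~\ref{June9_2} to invert the map and verifying surjectivity via the induced reverse homomorphism). No further calculation is required.
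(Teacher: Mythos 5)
Your proposal is correct and matches the paper's own proof exactly: the paper also derives the corollary by first applying Lemma~\ref{June9_1} to conclude that $t$ has a unique occurrence of $y$, and then invoking Proposition~\ref{June9_3}. No further comment is needed.
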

	\begin{proof} By Lemma~\ref{June9_1} and Proposition~\ref{June9_3}.
	\end{proof}

\begin{lemma}\label{March_8_23_1} Assume $\bar{x}= \langle x_i : i \in I\rangle$ is a tuple of variables and  $y,z,x_i$ are pairwise distinct. Let  $t_i=t_i(\bar{x},z)$  for $i=1,2$, let $r=r(\bar{x},y)$  and assume $y$ occurs in $r$. We write  $t_i^\prime= t_i(\bar{x},r(\bar{x},y))$  for the substitution, and similarly for other terms.  If  $t_1^\prime\sim t_2^\prime$, then  $t_1\sim t_2$.
\end{lemma}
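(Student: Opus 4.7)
The plan is to induct on $\rk(t_1)$, using Lemma~\ref{May12_3}(1) at the inductive step together with an elementary variable-counting argument to rule out degenerate configurations.

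I first record that $\sim$ preserves the multiset of variable occurrences, since each commutativity step merely permutes factors. Let $n_v(s)$ denote the number of occurrences of the variable $v$ in the term $s$. The substitution gives
\[ n_y(t_i^\prime) = n_z(t_i)\, n_y(r), \qquad n_{x_j}(t_i^\prime) = n_{x_j}(t_i) + n_z(t_i)\, n_{x_j}(r). \]
From $t_1^\prime \sim t_2^\prime$ and the hypothesis $n_y(r) \geq 1$ I obtain $n_z(t_1) = n_z(t_2)$, and then $n_{x_j}(t_1) = n_{x_j}(t_2)$ for every $j$. So $t_1$ and $t_2$ share the same multiset of variable occurrences, which is the key tool for ruling out mismatched shapes.

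For the base case $\rk(t_1) = 0$, the term $t_1$ is a single variable, and in either subcase ($t_1 = x_i$ or $t_1 = z$) the shared multiset forces $t_2 = t_1$. For the inductive step I write $t_1 = t_{1a} \cdot t_{1b}$. Then $t_2$ cannot be a variable: if $t_2 = x_j$ then $t_2^\prime$ would be a variable while $t_1^\prime$ is a product, and if $t_2 = z$ then the multiset condition would force $t_1 = z$ as well, contradicting that $t_1$ is a product. Hence $t_2 = t_{2a} \cdot t_{2b}$, and Lemma~\ref{May12_3}(1) applied to $t_{1a}^\prime \cdot t_{1b}^\prime \sim t_{2a}^\prime \cdot t_{2b}^\prime$ yields either $t_{1a}^\prime \sim t_{2a}^\prime$ together with $t_{1b}^\prime \sim t_{2b}^\prime$, or the swapped pairing. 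In each case the induction hypothesis applied to the matching pairs gives the corresponding equivalences among $t_{1a}, t_{1b}, t_{2a}, t_{2b}$, from which $t_1 \sim t_2$ follows, in the swapped case using one further commutativity step at the top.

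The only mildly subtle point is excluding the scenario in which $t_1$ is a genuine product but $t_2$ collapses to the single variable $z$ (which could in principle happen when $r$ is itself a product), and this is handled cleanly by the variable-counting observation, where the hypothesis that $y$ occurs in $r$ is used crucially. Everything else is a routine structural induction driven by Lemma~\ref{May12_3}(1).
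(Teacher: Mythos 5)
Your proof is correct and follows essentially the same route as the paper's: a structural induction on $t_1$, with Lemma~\ref{May12_3}(1) driving the product case and the inductive hypothesis applied to the matched (or swapped) factors. The only real difference is the invariant used to dispose of the degenerate cases where one side is the variable $z$: you count variable occurrences (using that $y$ occurs in $r$ to recover $n_z(t_1)=n_z(t_2)$ and hence equality of the whole variable multiset), whereas the paper compares the lengths $\len(t_i^\prime)$ against $\len(r)$ --- both devices work equally well.
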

\begin{proof} By induction on $t_1$.

(i) Let $t_1= x_i$.  Then  $t_1^\prime= x_i\sim t_2^\prime$, which implies $t_2^\prime= x_i$. Since  $y$ occurs in $r$,  $t_2= x_i$ and hence $t_1\sim t_2$.

(ii) Let  $t_1= z$.  Then $t_1^\prime = r\sim t_2^\prime$, which implies that $y$ occurs in $t_2^\prime$ and, therefore, that $z$ occurs in $t_2$.  If $t_2\neq z$, then $\len(t_2^\prime)>\len(r)=\len(t_1^\prime)$, which is impossible if $t_1^\prime\sim t_2^\prime$. Thus, $t_2= z$ and then $t_1\sim t_2$.

(iii) Let  $t_1= s_1\cdot s_2$  for some terms, $s_1,s_2$.  Then $t_1^\prime= s_1^\prime\cdot s_2^\prime\sim t_2^\prime$. Note that $t_2\neq x_i$, since otherwise $t_2^\prime= t_2=x_i\not\sim s_1^\prime\cdot s_2^\prime$. In case $t_2= z$, we get $t_2^\prime=r$, but since $z$ is in  $t_1$ (because $y$ occurs in $t_1^\prime$), then $\len(t_1^\prime)>\len(r)=\len(t_2^\prime)$, contradicting $t_1^\prime\sim t_2^\prime$. Hence, $t_2\neq z$. Therefore, $t_2= u_1\cdot u_2$  for some terms $u_1,u_2$.  Therefore, $t_2^\prime=u_1^\prime\cdot u_2^\prime\sim s_1^\prime\cdot s_2^\prime$. Without loss of generality,  $u_1^\prime\sim s_1^\prime$ and  $u_2^\prime\sim s_2^\prime$. By inductive hypothesis, $u_1\sim s_1$ and $u_2\sim s_2$.  Then $t_1= s_1\cdot s_2\sim u_1\cdot u_2 = t_2$.
\end{proof}

\begin{lemma}\label{March_8_23_2} 
Assume $\bar{x}= \langle x_i : i \in I\rangle$ is a tuple of variables and  $y,z,x_i$ are pairwise distinct. Let  $t=t(\bar{x},z)$,  let $r=r(\bar{x},y)$  and assume $y$ occurs in $r$. As above, we write  $t^\prime= t(\bar{x},r(\bar{x},y))$  for the substitution. Assume $r=r_1\cdot r_2$ and $y$ occurs in each $r_i$.  If $t$ and $r$ are reduced, then $t^\prime$ is reduced.
\end{lemma}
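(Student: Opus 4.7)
The plan is an induction on the structure of $t$ that leverages Lemma~\ref{March_8_23_1}. For the base case, $t$ is a variable: if $t = x_i$ then $t' = x_i$ is trivially reduced; if $t = z$ then $t' = r$, which is reduced by hypothesis.

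For the inductive step, suppose $t = t_1 \cdot t_2$ is reduced. By Lemma~\ref{May12_3}(5), $t_1$ and $t_2$ are themselves reduced, so the inductive hypothesis gives that $t_1'$ and $t_2'$ are reduced. Since every proper subterm of $t' = t_1' \cdot t_2'$ is a subterm of $t_1'$ or of $t_2'$, it only remains to check that $t'$ itself is not a forbidden subterm at the root. There are three patterns to rule out: $t_1' \sim t_2'$; $t_2' = u_1 \cdot u_2$ with $t_1' \sim u_1$ or $t_1' \sim u_2$; and $t_1' = v_1 \cdot v_2$ with $t_2' \sim v_1$ or $t_2' \sim v_2$. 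The first is immediate from Lemma~\ref{March_8_23_1}, since $t_1' \sim t_2'$ would force $t_1 \sim t_2$, contradicting that $t$ is reduced.

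For the second pattern, I split on the shape of $t_2$. If $t_2 = s_1 \cdot s_2$, then $t_2' = s_1' \cdot s_2'$, and an equivalence $t_1' \sim s_i'$ gives $t_1 \sim s_i$ by Lemma~\ref{March_8_23_1}, exhibiting a forbidden subterm $t_1 \cdot (s_1 \cdot s_2)$ of $t$. If $t_2 = z$, then $t_2' = r_1 \cdot r_2$, and I want to contradict $t_1' \sim r_i$ for $i \in \{1,2\}$: if $z$ occurs in $t_1$, then $t_1'$ contains a full copy of $r$ as a subterm, so $\len(t_1') \geq \len(r) > \len(r_i)$; if $z$ does not occur in $t_1$, then $t_1' = t_1$ contains no occurrence of $y$, whereas by hypothesis $y$ occurs in $r_i$. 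Either way we reach a contradiction. The third pattern is handled symmetrically, swapping the roles of $t_1$ and $t_2$.

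The main obstacle is exactly the case where the root of $t_2'$ (or $t_1'$) comes from the top of $r$ rather than from a splitting of $t_2$ (or $t_1$) itself, namely $t_2 = z$ (or $t_1 = z$). Lemma~\ref{March_8_23_1} does not apply here, since there is no decomposition of $t_2$ to match. This is precisely where the hypotheses that $r = r_1 \cdot r_2$ with $y$ occurring in each $r_i$ do the work: they enable the length/occurrence argument that closes the case.
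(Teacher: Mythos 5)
Your proof is correct and takes essentially the same route as the paper's: induction on $t$, with Lemma~\ref{March_8_23_1} disposing of the subcases where the offending factor decomposes inside $t_1$ or $t_2$, and length/variable-occurrence arguments (which is exactly where the hypothesis that $y$ occurs in both $r_1$ and $r_2$ is needed) handling the subcase where the factor is $z$. The only difference is organizational: the paper reduces to $t_1'\sim t_2'\cdot s$ without loss of generality and then splits on the shape of $t_1$, whereas you split by forbidden pattern first.
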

\begin{proof} By  induction on $t$.

(i) Let $t=x_i$.  Then  $t^\prime = t$  is reduced.

(ii) Let $t=z$. Then $t^\prime = r$ is reduced.

(iii) Let $t= t_1\cdot t_2$  for some terms $t_1,t_2$. By inductive hypothesis, each  $t_i^\prime =t_i(\bar{x},r(\bar{x},y))$  is reduced.  If $t^\prime =t_1^\prime\cdot t_2^\prime$ is not reduced, then either  $t_1^\prime\sim t_2^\prime$  or  $t_1^\prime\sim t_2^\prime\cdot s$  for some term $s$  or  $t_2^\prime\sim t_1^\prime\cdot s$  for some term $s$.  If $t_1^\prime\sim t_2^\prime$, we can apply Lemma~\ref{March_8_23_1} and obtain $t_1\sim t_2$, which implies that $t$ is not reduced.  Now assume, without loss of generality, that $t_1^\prime\sim t_2^\prime\cdot s$  for some term $s$.  We consider three subcases:

\textit{Subcase (a)}: \ $t_1=x_i$. Then  $t_1^\prime = x_i\sim t_2^\prime\cdot s$, which is impossible by length considerations.

\textit{Subcase (b)}: \ $t_1 = z$.  Then $t_1^\prime=r_1\cdot r_2\sim t_2^\prime\cdot s$. The case  $r_1\sim t_2^\prime$  is not possible, since  it implies that $z$ occurs in $t_2$ and then $\len(t_2^\prime)\geq \len (r)>\len(r_1)$. Hence  $r_1\sim s$ and $r_2\sim t_2^\prime$.  But again this is impossible, because it implies that $z$ occurs in $t_2$ and $\len(t_2^\prime)\geq \len(r)>\len(r_2)$.

\textit{Subcase (c)}: \  $t_1= s_1\cdot s_2$  for some terms $s_1,s_2$. Then $t_1^\prime=s_1^\prime\cdot s_2^\prime\sim t_2^\prime\cdot s$. Without loss of generality, $s_1^\prime\sim t_2^\prime$ and $s_2^\prime\sim s$. By  Lemma~\ref{March_8_23_1}, $s_1\sim t_2$. Then $t_1\sim t_2\cdot s_2$  and then  $t\sim (t_2\cdot s_2)\cdot t_2$  is not reduced.
\end{proof}

\bigskip

\begin{prop}\label{finalprop} Let $\bar{a},b$ be independent, let $r(\bar{x},y)$ be a reduced term with at least two occurrences of $y$, let $c=r(\bar{a},b)$  and let $f:\langle \bar{a},b\rangle\rightarrow \langle\bar{a},c\rangle$ be the surjective homomorphism induced by $f(\bar{a})=\bar{a}$  and $f(b)=c$. Then  $f$ is an isomorphism between $\langle \bar{a},b\rangle$ and $\langle\bar{a},c\rangle$  and  $b\not \in\langle\bar{a},c\rangle$. In fact, for every term $t(\bar{x},y)$ having exactly one occurrence of $y$,  $t(\bar{a},b)\not \in\langle \bar{a},c\rangle$.
\end{prop}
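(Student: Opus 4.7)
The plan is to prove the isomorphism claim and the stronger non-membership claim separately, after a preliminary syntactic reduction on $r$.

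First I would reduce to the case where $r=r_1\cdot r_2$ with $y$ occurring in both $r_1$ and $r_2$. If instead, in the top-level factorisation $r=r_1\cdot r_2$, the variable $y$ occurs only in $r_1$, then $r_2=r_2(\bar x)$, and the quasigroup identity $(uv)v=u$ yields $r_1(\bar a,b)=c\cdot r_2(\bar a)\in\langle\bar a,c\rangle$, so $\langle\bar a,c\rangle=\langle\bar a,r_1(\bar a,b)\rangle$. I may therefore replace $r$ by $r_1$, which is a reduced proper subterm still containing at least two occurrences of $y$; since $r$ is finite, this iteration terminates in the desired form. Under this normalisation, Lemma \ref{March_8_23_2} becomes applicable: for every reduced $s(\bar x,z)$, the substituted term $s(\bar x,r(\bar x,y))$ is reduced.

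For the isomorphism, it suffices by Corollary \ref{May26_3} to show that $\bar a,c$ is independent, which by Proposition \ref{May26_2} amounts to showing that $s(\bar a,c)\notin\langle\bar a\rangle$ for every reduced $s(\bar x,z)$ in which $z$ occurs. If instead $s(\bar a,c)=w(\bar a)$ for some term $w$, then $p(\bar x,y):=s(\bar x,r(\bar x,y))$ is reduced by Lemma \ref{March_8_23_2} and contains $y$, while $p(\bar a,b)=s(\bar a,c)=w(\bar a)\in\langle\bar a\rangle$, contradicting the independence of $\bar a,b$ via Proposition \ref{May26_2}.

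For the stronger claim, let $t(\bar x,y)$ have exactly one occurrence of $y$. A short inspection of the five reduction patterns shows that reducing never destroys a lone occurrence of a variable, so the reduced form $t^*$ of $t$ still has exactly one occurrence of $y$, and I work with $t^*$. Suppose $t^*(\bar a,b)=s(\bar a,c)$ for some reduced $s(\bar x,z)$. If $z$ does not occur in $s$, then $t^*(\bar a,b)\in\langle\bar a\rangle$, contradicting Proposition \ref{May26_2} for $\bar a,b$. Otherwise $p(\bar x,y):=s(\bar x,r(\bar x,y))$ is reduced by Lemma \ref{March_8_23_2}, and satisfies $p(\bar a,b)=t^*(\bar a,b)$, so Proposition \ref{May19_1} forces $p\sim t^*$. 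But each occurrence of $z$ in $s$ contributes at least two occurrences of $y$ to $p$, so $p$ has at least two occurrences of $y$ while $t^*$ has only one; since $\sim$ preserves each variable's multiplicity, this is the required contradiction.

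The main obstacle is the preliminary reduction on $r$: Lemma \ref{March_8_23_2} requires $y$ to appear in both top-level factors of $r$, whereas the hypothesis only places $y$ somewhere inside $r$ with multiplicity at least two. Once this syntactic normalisation is in place, the argument is a clean combination of Propositions \ref{May26_2} and \ref{May19_1} with Lemma \ref{March_8_23_2}.
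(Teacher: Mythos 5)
Your proposal is essentially correct and runs close to the paper's own argument: both hinge on first passing to the situation where $y$ occurs in both top-level factors of $r$, so that Lemma~\ref{March_8_23_2} applies. The differences are worth noting. For injectivity in the normalised case the paper argues directly: if $f(c_1)=f(c_2)$ with $c_i=t_i(\bar a,b)$, then $t_1'\sim t_2'$ by Proposition~\ref{May19_1}, and Lemma~\ref{March_8_23_1} cancels the substitution to give $t_1\sim t_2$; you instead route through Corollary~\ref{May26_3} and Proposition~\ref{May26_2}, showing that $\bar a,c$ is independent. Both work. For the general case the paper decomposes $f=g\circ h$ with $h$ the normalised map and $g$ an automorphism supplied by Proposition~\ref{June9_3} for a term with a single occurrence of $y$; you instead replace $r$ by a subterm and observe that $\langle\bar a,c\rangle$ is unchanged. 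Finally, you give an explicit argument for the clause about terms $t$ with exactly one occurrence of $y$ (using that the multiplicity of each variable is a $\sim$-invariant, so a term $p$ with at least two occurrences of $y$ cannot be $\sim$-equivalent to $t^*$), which the paper's proof does not spell out; that part of your argument is correct and is a genuine completion.

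The one step you should tighten is the reduction itself as it bears on the isomorphism claim. Replacing $r$ by $r_1$ leaves the subalgebra $\langle\bar a,c\rangle$ unchanged --- which is all you need for the two non-membership claims --- but it changes the homomorphism: the normalised map sends $b$ to $c^*=r^*(\bar a,b)$, not to $c$. Your argument establishes that $\bar a,c^*$ is independent, hence that the normalised map is injective; you still owe a sentence explaining why the original $f$ (with $f(b)=c$) is injective. This is easily supplied: $c=u(\bar a,c^*)$ for a term $u$ with a single occurrence of its last variable (built from the discarded factors $r_2(\bar a)$), so Proposition~\ref{June9_3} applied to the independent tuple $\bar a,c^*$ gives an automorphism $g$ of $\langle\bar a,c^*\rangle=\langle\bar a,c\rangle$ with $g(c^*)=c$, and $f=g\circ f^*$ by uniqueness of induced homomorphisms; alternatively, $\bar a,c$ is a generating set of $\langle\bar a,c^*\rangle$ of the same finite cardinality as the free base $\bar a,c^*$, so it is itself a free base by Proposition~\ref{May11_7}, and Corollary~\ref{May26_3} then yields injectivity of $f$. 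With that sentence added, the proof is complete.
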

\begin{proof} Consider first the case  $r= r_1\cdot r_2$, where each $r_i$ contains an occurrence of $y$.  In this case, by Lemma~\ref{March_8_23_2}, for every reduced term $t(\bar{x},z)$,  the term $t^\prime(\bar{x},y)= t(\bar{x},r(\bar{x},y))$ is reduced. 

We prove first that $f$ is one-to-one.  Assume $f(c_1)= f(c_2)$. For each $i$ there is some reduced term $t_i(\bar{x},z)$  such that  $c_i= t_i(\bar{a},b)$.  Since the terms $t_1^\prime,t_2^\prime$ are reduced and  $t_1^\prime(\bar{a},b)= f(c_1)= f(c_2)=t_2^\prime(\bar{a},b)$, by  Proposition~\ref{May19_1},  $t_1^\prime\sim t_2^\prime$.  Then  $t_1\sim t_2$ by Lemma~\ref{March_8_23_1}. Then  $c_1=t_1(\bar{a},b)=t_2(\bar{a},b)= c_2$.

Now we prove that $b\not\in \langle\bar{a},c\rangle$. Assume $b\in\langle\bar{a},c\rangle$. Then there is a reduced term $t(\bar{x},z)$ such that $b= t(\bar{a},c)$. Note that $z$ occurs in $t$, since otherwise $b\in\langle \bar{a}\rangle$.  By Lemma~\ref{March_8_23_2},  $t^\prime(\bar{x},y)= t(\bar{x},r(\bar{x},y))$ is reduced. Clearly, $b= t^\prime(\bar{a},b)$.  If $k=\rk(t^\prime)$, Proposition~\ref{May19_1} gives  $t^\prime(\bar{a},b)\in S_k(\bar{a},b)\smallsetminus S_{<k}(\bar{a},b)$.  But $k\geq \rk(r) >0$ and  $b\in S_0(\bar{a},b)$, a contradiction.

Now we consider the general case of a term $r(\bar{x},y)$  with at least two occurrences of $y$.  We claim that $f$ can be written as a composition $f=g\circ h$, where

(i)  $h$  is an isomorphism between $\langle \bar{a},b\rangle$ and $\langle \bar{a},d\rangle$,  where $d= s(\bar{a},b)$  for a reduced term  $s(\bar{x},y)$  which is a product  $s=s_1\cdot s_2$ of terms $s_i$ containing $y$,   and  $h$ is induced by  $h(\bar{a})= \bar{a}$  and $h(b)=s(\bar{a},b)=d$,  and

(ii) $g$ is an automorphism of  $\langle \bar{a},d\rangle =\langle\bar{a},c\rangle$ and  $c= t(\bar{a},d)$  for a term $t(\bar{x},y)$  with exactly one occurrence of $y$  and  $g$ is induced by $g(\bar{a})=\bar{a}$  and  $g(d)=t(\bar{a},d)= c$.

We prove this by induction on $r$. Clearly, $r$ is not a variable, hence $r=r_1\cdot r_2$  for some reduced terms $r_i= r_i(\bar{x},y)$.  If each $r_i$ contains an occurrence of $y$, we are in the case we have discussed at the beginning and we can take $f=h$ and take $g$ to be the identity on  $\langle \bar{a},c\rangle$. If one of the terms $r_i$, say $r_1$, does not contain $y$, then $r_1= r_1(\bar{x})$  and $r_2(\bar{x},y)$ contains at least two occurrences of $y$. By inductive hypothesis for $r_2$,   we obtain an isomorphism $f^\ast:\langle \bar{a},b\rangle\rightarrow \langle\bar{a},c^\ast\rangle$, induced by  $f^\ast(\bar{a})=\bar{a}$  and $f^\ast(b)=c^\ast= r_2(\bar{a},b)$ which can be decomposed as  $f^\ast = g^\ast\circ h$, where $h:\langle\bar{a},b\rangle \rightarrow \langle\bar{a},d\rangle =\langle \bar{a},c^\ast\rangle$ is induced by $h(\bar{a})=\bar{a}$ and  $h(b)= s(\bar{a},b)=d$, where $s =s_1\cdot s_2$ is reduced and each $s_i $  contains an occurrence of $y$,  and  $g^\ast$ is an automorphism of $\langle\bar{a},d\rangle$ induced by $g^\ast(\bar{a})=\bar{a}$  and  $g^\ast(d)=t^\ast(\bar{a},d)= c^\ast$, where $t^\ast(\bar{x},y)$  contains a unique occurrence of $y$. Now  $t= r_1\cdot t^\ast$  is a term with a unique occurrence of $y$  and  $t(\bar{a},d)= r_1(\bar{a})\cdot t^\ast(\bar{a},d)= r_1(\bar{a})\cdot c^\ast= r_1^\ast(\bar{a})\cdot r_2(\bar{a},b)= r(\bar{a},b)= c$. Hence, there is an automorphism $g$ of $\langle\bar{a},d\rangle$  induced by  $g(\bar{a})=\bar{a}$  and  $g(d)= t(\bar{a},d)= c$  and $f= g\circ h$  is the desired decomposition.
\end{proof}

	\begin{ex}\label{ex1}
		Let $\bar{a}=a_1a_2$ and let $\bar{a}b$ be independent. Let  $f:\langle\bar{a}b\rangle \rightarrow \langle \bar{a}b\rangle$ be the homomorphism induced  by 
		\begin{itemize}
			\item $a_i\mapsto a_i$ for $i=1,2$
			\item $b\mapsto (a_1 \cdot b)\cdot  (a_2\cdot  b)$.
		\end{itemize}
		Then Proposition~\ref{finalprop} with $r(\bar{x}, y)=(x_1\cdot y)\cdot (x_2\cdot y)$ gives $b\not\in\langle\bar{a},(a_1\cdot  b)\cdot  (a_2\cdot  b)\rangle$, so $f$ is not surjective. However, $f$ induces an isomorphism between $\langle \bar{a},b\rangle$ and $\langle \bar{a}, (a_1\cdot b)\cdot (a_2\cdot b)\rangle \subseteq \langle \bar{a},b\rangle$, and so it is an embedding of $\langle\bar{a},b\rangle$ into itself.
		
For a model theoretic application of this example,  recall that an element of a structure $M$ is \textit{definable} over a subset $A\subseteq M$ if it satisfies a first-order formula with parameters in $A$ that has a unique solution in $M$, and it is \textit{algebraic} over A if the formula has finitely many solutions. Then the algebraic (resp. definable) closure of $A$ is the set of all elements of $M$ that are algebraic (resp. definable) over $A$. 

Let $c=(a_1\cdot b)\cdot (a_2\cdot b)$. Then  the algebraic closure of $\{a_1,a_2,c \}$ properly contains the generated Steiner quasigroup $\langle \bar{a},c \rangle$.  In fact  it is easy to see that any unconfined Steiner quasigroup $M$ extending $\langle \bar{a}\rangle$ satisfies 
$$ \forall y_1 y_2 \,( (a_1\cdot y_1)\cdot (a_2\cdot y_1)=(a_1\cdot y_2)\cdot (a_2\cdot y_2) \rightarrow y_1=y_2).$$
This is because two distinct witnesses for $y$ in $(a_1\cdot y)\cdot (a_2\cdot y)$ would introduce a confined configuration.
It follows that $b$ is definable over $\bar{a}c$, hence in particular $b$ is algebraic over $\bar{a}$, but $b \notin \langle \bar{a},c \rangle$.
\end{ex}

	\begin{cor}\label{July18_1} Let  $\bar{a}= a_1,\ldots,a_n$, and assume $\bar{a},b$ is independent and $c\in \langle \bar{a}b\rangle \smallsetminus \langle \bar{a}\rangle$. The following are equivalent:
	\begin{enumerate}
		\item $b\in\langle\bar{a},c\rangle$
		\item $c= t(\bar{a},b)$ for some reduced term $t(\bar{x},y)$ with a unique occurrence of $y$
		\item  the homomorphism $f:\langle \bar{a},b\rangle \rightarrow  \langle \bar{a},b\rangle$  defined by $f(\bar{a})= \bar{a}$  and  $f(b)= c$ is surjective
		\item  the homomorphism $f:\langle \bar{a},b\rangle \rightarrow  \langle \bar{a},b\rangle$  defined by $f(\bar{a})= \bar{a}$  and  $f(b)= c$ is an automorphism of $\langle \bar{a},b\rangle$.
	\end{enumerate}
\end{cor}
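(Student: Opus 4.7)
The plan is to establish the easy equivalences $(1)\Leftrightarrow(3)\Leftrightarrow(4)$ first and then tie in condition~(2) using the two non-trivial structural results already proved in this section, namely Proposition~\ref{June9_3} and Proposition~\ref{finalprop}.

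For the easy part: the homomorphism $f$ exists uniquely because $\bar a,b$ is independent, and its image is the subalgebra generated by $f(\bar a)\cup\{f(b)\}=\bar a\cup\{c\}$, which is $\langle\bar a,c\rangle\subseteq\langle\bar a,b\rangle$. Hence $f$ is surjective if and only if $b\in\langle\bar a,c\rangle$, giving $(1)\Leftrightarrow(3)$. Moreover, since $\bar a b$ is independent, $\langle\bar a,b\rangle$ is a finitely generated free Steiner quasigroup, so Corollary~\ref{May11_9.2} applies: every surjective endomorphism of $\langle\bar a,b\rangle$ is an automorphism, yielding $(3)\Leftrightarrow(4)$.

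For the remaining implications involving~(2): the direction $(2)\Rightarrow(4)$ is exactly Proposition~\ref{June9_3}. For $(1)\Rightarrow(2)$, since $c\in\langle\bar a b\rangle$, Lemma~\ref{May12_4} provides a reduced term $t(\bar x,y)$ with $c=t(\bar a,b)$, and by Proposition~\ref{May19_1} any such $t$ is determined up to $\sim$. Because $\sim$ preserves the multiset of variable occurrences, the number of occurrences of $y$ in a reduced representative of $c$ is an invariant of $c$. Since $c\notin\langle\bar a\rangle$, $y$ must occur at least once. If it occurred at least twice, Proposition~\ref{finalprop} would give $b\notin\langle\bar a,c\rangle$, contradicting~(1); hence $y$ occurs exactly once, proving~(2).

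The only real substance is in the invocations of Proposition~\ref{finalprop} and Proposition~\ref{June9_3}, so no serious obstacle arises at this stage: the corollary is essentially a repackaging of those two results together with the observation that surjectivity of $f$ is nothing other than the algebraic condition $b\in\langle\bar a,c\rangle$, which in turn is upgraded to an automorphism property via the free-quasigroup fact that surjective endomorphisms of finitely generated free Steiner quasigroups are automorphisms.
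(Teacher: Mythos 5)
Your proposal is correct and follows essentially the same route as the paper: the equivalence of (1) and (3) is the trivial observation about the image of $f$, the implication $(2)\Rightarrow(4)$ is Proposition~\ref{June9_3}, and the passage from $b\in\langle\bar a,c\rangle$ back to (2) is the contrapositive of Proposition~\ref{finalprop}. The only (harmless) difference is that you close the cycle by invoking Corollary~\ref{May11_9.2} for $(3)\Rightarrow(4)$, whereas the paper gets this for free from the chain $3\Rightarrow2\Rightarrow4$.
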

\begin{proof}
It is clear  that 1 $\Leftrightarrow$ 3  and that 4 $\Rightarrow$ 3.  By Proposition~\ref{June9_3}, we know that  2 $\Rightarrow$ 4.
Finally,    3 $\Rightarrow$ 2 follows from Proposition~\ref{finalprop}.

\end{proof}

\section{Tame automorphisms}

A {\bf Steiner loop} is a triple $(M,\cdot,1)$ where $\cdot$ is a binary operation on $M$, $1\in M$ and
 \begin{enumerate}
	\item $\forall x \, (x \cdot y= y\cdot x)$
	\item $\forall x \, (x\cdot x =1)$
	\item $\forall x \, (x\cdot (x\cdot y)=y)$.
\end{enumerate}
Note that this implies  $\forall x \, (x\cdot 1 =x)$.  A Steiner loop can be obtained from a Steiner quasigroup by adding the element $1$ and redefining $x\cdot x$ as $x\cdot x= 1$. Conversely, every Steiner loop produces a Steiner quasigroup by eliminating  $1$ and redefining $x\cdot x$ as $x\cdot x=x$. 

 Every embedding between Steiner quasigroups induces a corresponding embedding between the associated Steiner loops, and conversely. Steiner quasigroups and Steiner loops have, essentially, the same automorphism groups. If $f$ is a homomorphism between two Steiner quasigroups, $f$ can be extended to a homomorphism of the associated Steiner loops by setting  $f(1)=1$. But a homomorphism $f$ of Steiner loops does not define a homomorphism of the corresponding Steiner quasigroups, since there might be elements $a\neq 1$ with $f(a)=1$.

There is a particular result on automorphisms of Steiner loops in~\cite{grishkovetal} that is relevant for us and can be succesfully translated to our setting. It implies that every automorphims of a finitely generated free Steiner quasigroup can be obtained as a finite product of automorphisms of the kind studied in the previous section and described in Corollary~\ref{July18_1}, and even of a smaller class of automorphisms called \textit{elementary}. This result is the content of this section.

Let $\bar{a}= a_1,\ldots,a_n$  be independent elements of a Steiner quasigroup. An endomorphism $f:\langle\bar{a}\rangle\rightarrow \langle\bar{a}\rangle$ is {\bf elementary} if for some $i$ and some term $t(\bar{x})$ not containing $x_i$, $f$ is induced  by the mapping  $f(a_i)= a_i\cdot t(\bar{a})$  and $f(a_j)= a_j$ for $j\neq i$. Notice that, by  Proposition~\ref{June9_3}, every elementary endomorphism is an automorphism.  The automorphisms generated by the elementary automorphisms are called {\bf tame}. If $f$ is elementary, then $f=f^{-1}$, and hence every tame automorphism of $\langle \bar{a}\rangle$ is of the form $f_1\circ f_2\circ \ldots \circ f_m$  for some elementary automorphisms $f_i$. We show that every automorphism is tame, a proposition proven for Steiner loops in~\cite{grishkovetal}.

Let  $f:\langle \bar{a}\rangle\rightarrow\langle \bar{a}\rangle $ be an endomorphism induced by $a_i\mapsto t_i(\bar{a})$, where every $t_i(\bar{x})$ is a reduced term. We say that $f$ {\bf preserves reduced terms} if for every reduced term $t(\bar{x})$, the term  $t(t_1(\bar{x}),\ldots,t_n(\bar{x}))$ is reduced. We say that the tuple $t_1(\bar{x}),\ldots,t_n(\bar{x})$ of reduced terms is {\bf irreducible} if  there is no $i$ for which there are reduced terms $r(\bar{x}), s(\bar{x})$  such that  $t_i\sim r(t_1(\bar{x}),\ldots,t_n(\bar{x}))\cdot s(\bar{x})$. Notice that if there is such $i$, then $x_i$ does not occur in $r(\bar{x})$, since otherwise $\len(t_i)<\len(r(t_1(\bar{x}),\ldots,t_n(\bar{x}))\cdot s(\bar{x}))$.

\begin{lemma}\label{loops1} Let  $ \bar{a}=a_1,\ldots,a_n$ be independent elements of a Steiner quasigroup  and let $f:\langle \bar{a}\rangle\rightarrow\langle \bar{a}\rangle $ be an endomorphism induced by $a_i\mapsto t_i(\bar{a})$, where every $t_i(\bar{x})$ is a reduced term.
	\begin{enumerate} 
\item	If $f$ is an embedding, then $f$ preserves reduced terms if and only if the tuple of terms $t_1(\bar{x}),\ldots,t_n(\bar{x})$ is irreducible.
\item If $f$ is surjective, then $f$ preserves reduced terms  if and only if  every term $t_i$ is a variable.
\end{enumerate}
\end{lemma}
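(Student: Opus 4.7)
The plan is to reduce everything to Proposition~\ref{May19_1}, which lets one pass between syntactic $\sim$-equivalence of reduced terms and numerical equality after evaluation at the independent tuple $\bar{a}$.

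For (1) $\Leftarrow$, I induct on the rank of a reduced term $u(\bar{x})$. The base case $u=x_j$ is immediate since $u(\bar{t})=t_j$ is reduced. For $u=u_1\cdot u_2$, the inductive hypothesis makes both $u_1(\bar{t})$ and $u_2(\bar{t})$ reduced, so any failure of reducedness in $u(\bar{t})$ must occur at the very top, i.e.\ in one of the three forbidden patterns. If $u_1(\bar{t})\sim u_2(\bar{t})$, evaluating at $\bar{a}$ and using injectivity of $f$ gives $u_1(\bar{a})=u_2(\bar{a})$, and Proposition~\ref{May19_1}(2) forces $u_1\sim u_2$, contradicting that $u$ is reduced. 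If instead $u_1(\bar{t})=P\cdot Q$ with, say, $P\sim u_2(\bar{t})$ (the remaining subcases are symmetric), I split on whether $u_1$ is composite or a variable. If $u_1=u_{11}\cdot u_{12}$, then $P=u_{11}(\bar{t})$, and the same injectivity-plus-Proposition~\ref{May19_1} argument produces $u_{11}\sim u_2$, putting $u$ into the forbidden shape $(u_{11}\cdot u_{12})\cdot u_2$. If $u_1=x_j$, then $t_j=P\cdot Q$, so $t_j\sim u_2(\bar{t})\cdot Q$, and taking $r:=u_2$, $s:=Q$ (a reduced subterm of the reduced $t_j$, hence a reduced term in $\bar{x}$) directly contradicts irreducibility of $\bar{t}$.

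For (1) $\Rightarrow$, I argue contrapositively: given $t_i\sim r(\bar{t})\cdot s$ with $r,s$ reduced, I construct a reduced $u(\bar{x})$ such that $u(\bar{t})$ is not reduced. If $r$ is the variable $x_j$, then $j\ne i$ (else $t_i\sim t_i\cdot s$ contradicts length-preservation of $\sim$), and $u:=x_j\cdot x_i$ is reduced while $u(\bar{t})=t_j\cdot t_i\sim t_j\cdot(t_j\cdot s)$ has the forbidden shape $t_1(t_2t_3)$ with $t_1\sim t_2$. If $r$ is composite, a length comparison rules out $x_i$ occupying a top-level position in $r$ (otherwise $r(\bar{t})\cdot s$ would strictly contain $t_i$ as a subtree while being $\sim$-equivalent to $t_i$), so $u:=r\cdot x_i$ is reduced and $u(\bar{t})\sim r(\bar{t})\cdot(r(\bar{t})\cdot s)$ is again forbidden.

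For (2), the direction $\Leftarrow$ is immediate: if each $t_i$ is a variable, then by surjectivity plus Corollary~\ref{May11_9.2}, $f$ is an automorphism, so the $t_i$ permute the $a_i$, and substituting $\bar{t}$ for $\bar{x}$ is a renaming, which preserves reducedness. For $\Rightarrow$, surjectivity and Corollary~\ref{May11_9.2} again make $f$ an automorphism; write $f^{-1}(a_i)=s_i(\bar{a})$ for reduced $s_i$, so $s_i(\bar{t}(\bar{a}))=a_i$. Since $f$ preserves reduced terms, each $s_i(\bar{t})$ is reduced, and Proposition~\ref{May19_1}(2) applied to the reduced terms $s_i(\bar{t})$ and $x_i$ gives $s_i(\bar{t})\sim x_i$, hence $s_i(\bar{t})=x_i$. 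As substitution into a composite term yields a composite term, each $s_i$ must itself be a variable $x_{k_i}$ with $t_{k_i}=x_i$; the injective map $i\mapsto k_i$ is then a permutation of $\{1,\ldots,n\}$, so every $t_j$ is a variable. The main delicacy is the case analysis in (1) $\Leftarrow$, where one must juggle injectivity-plus-independence (in the composite subcases) with irreducibility of $\bar{t}$ (in the variable subcases), and ensure that the $r,s$ invoked in the latter are genuinely reduced terms in $\bar{x}$ — automatic because they are selected as subterms of reduced $t_j$.
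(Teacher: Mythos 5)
Your proof is correct and follows the same overall skeleton as the paper's: the contrapositive construction of a bad reduced term $u$ for (1)\,$\Rightarrow$, an induction on reduced terms for (1)\,$\Leftarrow$, and pulling the generators back through a surjection for (2). The one place where you genuinely diverge is the hardest subcase of (1)\,$\Leftarrow$, where the top-level failure is $u_1(\bar{t})=P\cdot Q$ with $P\sim u_2(\bar{t})$ and $u_1=u_{11}\cdot u_{12}$ is composite. The paper at this point assembles the term $u_2(\bar{x})\cdot u_{12}(\bar{x})$, spends three sub-subcases verifying that it is reduced, and only then applies Proposition~\ref{May19_1} to conclude $u_1\sim u_2\cdot u_{12}$. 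You instead observe, via unique readability, that $P=u_{11}(\bar{t})$, and apply Proposition~\ref{May19_1} one level down to the two reduced terms $u_{11}$ and $u_2$ (after using injectivity of $f$ to pass from $u_{11}(\bar{b})=u_2(\bar{b})$ to $u_{11}(\bar{a})=u_2(\bar{a})$), obtaining $u_{11}\sim u_2$ and hence the forbidden pattern $(u_{11}\cdot u_{12})\cdot u_2$ directly. This bypasses the paper's reducedness verification entirely and is a real simplification; the two routes are otherwise interchangeable, since the paper's use of independence of $\bar{b}=f(\bar{a})$ and your use of injectivity of $f$ over the independent $\bar{a}$ are equivalent. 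Two cosmetic remarks: in (2)\,$\Leftarrow$ you assert that a bijective renaming of variables preserves reducedness, which the paper justifies with a short double induction (it is routine, but worth a line); and your appeal to Corollary~\ref{May11_9.2} in (2) is harmless but avoidable, as surjectivity alone suffices to write each $a_i$ as $s_i(\bar{b})$ for a reduced $s_i$.
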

\begin{proof} 1. Let $f$ be defined by the reduced terms  $t_1,\ldots,t_n$  and let $r(\bar{x}), s(\bar{x})$ be reduced terms  such that $t_i\sim r(t_1(\bar{x}),\ldots,t_n(\bar{x}))\cdot s(\bar{x})$. Then  $x_i$ does not occur in $r$ and, therefore,  $u(\bar{x})=x_i\cdot r$ is reduced.   But  $u(t_1,\ldots,t_n)= t_i\cdot r(t_1,\ldots,t_n)\sim (r(t_1,\ldots,t_n)\cdot s )\cdot r(t_1,\ldots,t_n)$ is not reduced. Hence, $f$ does not preserve reduced terms.  For the other direction, we assume $t_1(\bar{x}),\ldots,t_n(\bar{x})$ is irreducible and we prove by induction on $r$ that for each reduced term $r(\bar{x})$,  $r(t_1(\bar{x}),\ldots,t_n(\bar{x}))$ is reduced. Let $b_i=t_i(\bar{a})$  and let $\bar{b}=b_1,\ldots,b_n$. Since $f$ is an embedding, $b_1,\ldots,b_n$ are independent.
	
	(i) Let $r=x_i$. Then $r(t_1,\ldots,t_n)=t_i$ is reduced.
	
	(ii) Let $r=r_1\cdot r_2$. Let $\bar{t}=t_1,\ldots,t_n$. By inductive hypothesis, $r_i(\bar{t})$ is reduced for $i=1,2$. If  $r(\bar{t})$ is not reduced, then,  by Lemma~\ref{May12_3}, $r_1(\bar{t})\sim r_2(\bar{t})$ or  $r_1(\bar{t})\sim r_2(\bar{t})\cdot s(\bar{x})$ or $r_2(\bar{t})\sim r_1(\bar{t})\cdot s(\bar{x})$  for some term $s(\bar{x})$. In the first case, $r_1(\bar{t}(\bar{a}))= r_2(\bar{t}(\bar{a}))$, that is, $r_1(\bar{b})= r_2(\bar{b})$ and, by Proposition~\ref{May19_1}, $r_1\sim r_2$, a contradiction, since $r$ is reduced. The second and third cases are symmetric. Without loss of generality, assume $r_1(\bar{t})\sim r_2(\bar{t})\cdot s(\bar{x})$ for some term $s(\bar{x})$. If $r_1=x_i$, then  $r_1(\bar{t})=t_i$  and $t_i\sim r_2(\bar{t})\cdot s(\bar{x})$, contradicting the irreducibility of $t_1(\bar{x}),\ldots,t_n(\bar{x})$. Hence $r_1$ is a product, say $r_1= s_1\cdot s_2$. Then $s_1(\bar{t})\cdot s_2(\bar{t})\sim r_2(\bar{t})\cdot s(\bar{x})$ and by Lemma~\ref{May12_3}, $s(\bar{x})\sim s_1(\bar{t})$ or $s(\bar{x})\sim s_2(\bar{t})$. Without loss of generality, $s(\bar{x})\sim s_1(\bar{t})$. Then $r_1(\bar{b})= r_2(\bar{b})\cdot s_1(\bar{b})$. We  check that $r_2(\bar{x})\cdot s_1(\bar{x})$ is reduced. If it is not, then $r_2\sim s_1$ or  $r_2\sim s_1\cdot s_3$ or $s_1\sim r_2\cdot s_3$ for some term $s_3(\bar{x})$. If $r_2\sim s_1$, then $r_1(\bar{b})=r_2(\bar{b})$ and, by Proposition~\ref{May19_1},  $r_1\sim r_2$, a contradiction since $r$ is reduced. In the case 
	$r_2\sim s_1\cdot s_3$, we have that $r_2(\bar{t})\sim s_1(\bar{t})\cdot s_3(\bar{t})$ and then $r_1(\bar{t})\sim r_2(\bar{t})\cdot s(\bar{x}) \sim (s_1(\bar{t})\cdot s_3(\bar{t}))\cdot s_1(\bar{t})$, a contradiction since $r_1(\bar{t})$ is reduced. In the case $s_1\sim r_2\cdot s_3$ we get a similar contradiction, since then $s_1(\bar{t})\sim r_2(\bar{t})\cdot s_3(\bar{t})$ and, therefore, $r_1(\bar{t})\sim r_2(\bar{t})\cdot (r_2(\bar{t})\cdot s_3(\bar{t}))$. We have shown that $r_2(\bar{x})\cdot s_1(\bar{x})$ is reduced. Since $\bar{b}$ is independent and $r_1(\bar{b})= r_2(\bar{b})\cdot s_1(\bar{b})$, this implies that $r_1(\bar{x})\sim r_2(\bar{x})\cdot s_1(\bar{x})$, which is impossible since $r(x)=r_1(\bar{x})\cdot r_2(\bar{x})$ is reduced.
	
	2. Assume $f$ is surjective and preserves reduced terms. Let $b_i=t_i(\bar{a})$  and let $\bar{b}=b_1,\ldots,b_n$.  By Lemma~\ref{May12_4}, for every $i$ there is a reduced term $r_i(\bar{x})$ such that $a_i=r_i(\bar{b})$. Then $a_i= r_i(\bar{t}(\bar{a}))$. Since $r_i(\bar{t})$ is reduced and $\bar{a}$ is independent $x_i\sim r_i(\bar{t})$.  But this is only possible if $r_i=x_j$ for some $j$ and  $t_j=x_i$.  For the other direction, notice that if every $t_i$ is a variable and $f$ is surjective, then $f$ is an automorphims induced by a permutation of $\{a_1,\ldots,a_n\}$. If $r(\bar{x})$ is reduced, then $r(\bar{t})$ is obtained from $r$ by a permutation of its variables $\bar{x}=x_1,\ldots,x_n$. For any term $s(\bar{x})$, let $s^\ast$ be the corresponding substitution by the same permutation of variables. An induction on $s$ shows easily that $s^\ast\sim u^\ast$ implies $s\sim u$. Using this, a straightforward induction on $s$ shows that if $s$ is reduced, $s^\ast$ is also reduced. Hence $r(\bar{t}) =r^\ast$ is reduced.
\end{proof}

\begin{prop}[Grishkov et al.~\cite{grishkovetal}]\label{loops2}
	Every automorphism of a finitely generated free Steiner quasigroup is tame.
\end{prop}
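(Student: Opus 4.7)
My plan is to induct on $N(f) := \sum_{i=1}^{n} \len(t_i)$, where $f$ is given by $a_i \mapsto t_i(\bar{a})$ with each $t_i$ a reduced term. Such a reduced representation exists by Lemma~\ref{May12_4}, and both the property of being reduced and $\len$ are invariant under $\sim$ by Lemma~\ref{May12_3}, so $N(f)$ is well defined.

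For the base case $N(f) = n$, every $t_i$ must be a variable $x_{\sigma(i)}$, and since $f$ is bijective $\sigma$ is a permutation of $\{1,\ldots,n\}$, so $f$ permutes the generators. It suffices to show that each transposition $(a_i\ a_j)$ is tame, since every permutation is a product of transpositions. Taking $e_1$ to be the elementary automorphism $a_i \mapsto a_i\cdot a_j$ (fixing the rest) and $e_2$ to be $a_j \mapsto a_j\cdot a_i$ (fixing the rest), a direct calculation using the Steiner identity $x(xy) = y$ verifies that $e_1\circ e_2\circ e_1$ realizes $(a_i\ a_j)$.

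For the inductive step, suppose $N(f) > n$, so some $t_i$ is not a variable. Since $f$ is surjective but not represented entirely by variables, part~2 of Lemma~\ref{loops1} forces $f$ not to preserve reduced terms; applying part~1 of the same lemma (to $f$ viewed as an embedding) yields that the tuple $(t_1,\ldots,t_n)$ is not irreducible. Choose then an index $i$ and reduced terms $r(\bar{x}), s(\bar{x})$ with $x_i$ not in $r$, such that $t_i \sim r(t_1(\bar{x}),\ldots,t_n(\bar{x}))\cdot s(\bar{x})$. Let $g$ be the elementary automorphism fixing $a_k$ for $k\neq i$ and sending $a_i \mapsto a_i\cdot r(\bar{a})$. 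Using $f(r(\bar{a})) = r(f(\bar{a}))$ and the Steiner identity $(xy)x = y$:
\[
(f\circ g)(a_i) \;=\; f(a_i)\cdot f(r(\bar{a})) \;=\; \bigl(r(f(\bar{a}))\cdot s(\bar{a})\bigr)\cdot r(f(\bar{a})) \;=\; s(\bar{a}),
\]
while $(f\circ g)(a_k) = t_k(\bar{a})$ for $k\neq i$. Thus $f\circ g$ admits a reduced representation of total length $N(f) - \len(t_i) + \len(s) < N(f)$ (the inequality holding because $\len(r(\bar{t}))\geq 1$), so by the inductive hypothesis $f\circ g$ is tame. Since every elementary automorphism is an involution, $g = g^{-1}$, and hence $f = (f\circ g)\circ g$ is tame as well.

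The main obstacle is locating the right elementary automorphism to compose with, and this is exactly what the non-irreducibility witness $(r,s)$ furnishes via Lemma~\ref{loops1}; once it is in hand, the length drop and the induction close the argument cleanly, while the base-case verification that transpositions are tame rests only on the Steiner quasigroup axioms.
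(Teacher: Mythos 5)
Your proof is correct and follows essentially the same route as the paper's: induction on the total length of a reduced-term representation, the base case handled by writing each transposition as a product $e_1\circ e_2\circ e_1$ of elementary automorphisms, and the inductive step obtained by extracting a non-irreducibility witness $(i,r,s)$ from Lemma~\ref{loops1} and composing with the elementary automorphism $a_i\mapsto a_i\cdot r(\bar{a})$ to strictly decrease the length. The only cosmetic difference is that you observe directly that $(f\circ g)(a_i)=s(\bar{a})$ with $s$ already reduced, where the paper introduces an auxiliary reduced term $r_i$ with $\len(r_i)\leq\len(s)$; the argument is the same.
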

\begin{proof}  Let  $ \bar{a}=a_1,\ldots,a_n$ be independent elements of a Steiner quasigroup  and let $f$ be an automorphism of $\langle\bar{a}\rangle$ induced by the tuple of reduced terms $\bar{t}=t_1(\bar{x}),\ldots,t_n(\bar{x})$.  Let $b_i=f(a_i)=t_i(\bar{a})$ and $\bar{b}=b_1,\ldots,b_n$.  We prove by induction on $\len(\bar{t})=\len(t_1)+\ldots +\len(t_n)$ that $f$ is tame. The initial case is $\len(\bar{t})=n$. This means that each $t_i$ is a different variable and $f$ is induced by the corresponding permutation of $\{x_1,\ldots,x_n\}$. Since every permutation is a product of transpositions, it is enough to consider the case of a transposition $(i,j)$. Hence, $f(a_i)=a_j$, $f(a_j)=a_i$ and $f(a_k)=a_k$ if $k\neq i,j$. Let $g,h$ be the elementary automorphisms defined by $ g(a_i)=a_i\cdot a_j$ and $h(a_j)=a_j\cdot a_i$ (fixing all other elements of $\bar{a}$ in each case). It is easy to check that $f= g\circ h\circ g$ and, hence, $f$ is tame. Now consider the case $\len(\bar{t})>n$.  Then  some $t_i$ is not a variable and by  Lemma~\ref{loops1}, $t_1,\ldots,t_n$ are not irreducible. 
Hence, for some $i$  there are reduced terms $r(\bar{x}), s(\bar{x})$  such that  $t_i\sim r(t_1(\bar{x}),\ldots,t_n(\bar{x}))\cdot s(\bar{x})$, and it follows that $x_i$ does not occur in $r(\bar{x})$. The term $t_i\cdot r(\bar{t})$ is not reduced, but there is a reduced term $r_i(\bar{x})$ such that $r_i(\bar{a})= t_i(\bar{a})\cdot r(t_1(\bar{a}),\ldots,t_n(\bar{a}))$. Then $\len(r_i)\leq \len(s(\bar{x}))<\len(t_i)$ and we can apply the induction hypothesis to the automorphism $g$ induced by the terms $t_1,\ldots,t_{i-1},r_i,t_{i+1},\ldots,t_n$. It is, indeed, an automorphism, since $g=f\circ h$, where $h$ is the elementary automorphism where $x_i\mapsto x_i\cdot r(\bar{x})$ (and $x_i$ does not occur in $r(\bar{x})$).  Notice that  $g\circ h= f\circ h\circ h= f$, since $h$ is elementary and then $h\circ h$ is the identity. Since, by induction hypothesis, $g$ is tame, it is clear that $f$ is tame.
	\end{proof}

\section{A conjecture}

Example~\ref{ex1} shows how our results are relevant to model theoretic algebraic closure, an important tool in understanding the model theory of free Steiner triple systems. We conjecture that this example is an instance of a more general connection between the dependence of an element $b$ on an independent tuple $\bar{a}$ and definability and algebraicity of $b$ over~$\langle \bar{a}b \rangle$.

\begin{df}
	Let $t=t(\bar{x})$ and $r=r(\bar{x})$ be terms. We write $t\equiv r$ if in every Steiner quasigroup  $M$, for every tuple $\bar{a}\in M$,  $t(\bar{a})= r(\bar{a})$.
\end{df}

\begin{prop}
	Assume $\bar{x}=x_1,\ldots,x_n$  and  $t(\bar{x}), r(\bar{x})$ are terms.
	\begin{enumerate}
		\item If $t\sim r$, then $t\equiv r$.
		\item If $t,r$ are reduced, the following are equivalent:
		\begin{enumerate}
			\item $t\equiv r$.
			\item If $\bar{a}=a_1,\ldots,a_n$ is a base of the Steiner quasigroup $M$, then  $t(\bar{a})=r(\bar{a})$.
			\item $t\sim r$.
		\end{enumerate}
	\end{enumerate}
\end{prop}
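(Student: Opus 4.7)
The plan is to treat Part~1 as essentially a one-line invocation of a prior lemma, and then reduce Part~2 to a small cycle of implications in which only one direction is substantial.

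For Part~1, the relation $t\sim r$ is obtained by finitely many applications of commutativity to subterms, so it suffices to observe that in any Steiner quasigroup commutativity holds as an axiom; the clean statement I would cite is Lemma~\ref{May12_3}(3), which gives $t_1(\bar{a})=t_2(\bar{a})$ for any tuple $\bar{a}$ whenever $t_1\sim t_2$. That is exactly the content of $t\equiv r$.

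For Part~2, I would run the cycle (c)$\Rightarrow$(a)$\Rightarrow$(b)$\Rightarrow$(c). The implication (c)$\Rightarrow$(a) is Part~1 applied to reduced terms, and (a)$\Rightarrow$(b) is immediate because (b) is a particular instance of the universal statement (a). The only nontrivial step is (b)$\Rightarrow$(c). Here I would invoke the existence, recalled in the introduction, of a free Steiner quasigroup on $n$ generators for any $n\geq 1$; let $M$ be such a quasigroup with free base $\bar{a}=a_1,\dots,a_n$. By hypothesis (b), $t(\bar{a})=r(\bar{a})$ in $M$, and $t,r$ are reduced by assumption. Proposition~\ref{May19_1}(2) then yields $t\sim r$ directly.

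The main obstacle is essentially packaging: one must make sure that the free Steiner quasigroup on exactly $n$ generators exists (which is granted by the variety argument in Section~2, covering all $n\geq 1$), and that the notion of ``base'' in condition (b) is understood to mean a free base, so that Proposition~\ref{May19_1}(2) applies. Beyond that, the argument is routine and short, because all the heavy lifting — the fact that non-equivalent reduced terms produce distinct elements on a free base — is already contained in Proposition~\ref{May19_1}.
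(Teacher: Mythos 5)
Your proposal is correct and matches the paper's own (very terse) proof: Part~1 is the content of Lemma~\ref{May12_3}(3), and the only substantive step in Part~2 is (b)$\Rightarrow$(c), which the paper also derives from Proposition~\ref{May19_1}. Your reading of ``base'' as ``free base'' and your appeal to the existence of a free Steiner quasigroup on $n$ generators are exactly the packaging the paper leaves implicit.
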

\begin{proof}
	1 is clear.  2 follows from  Proposition~\ref{May19_1}.
\end{proof}

Every term can be transformed into a reduced term just by replacing subterms of the form $t\cdot t$ by $t$  and subterms of the form $t\cdot(r\cdot t)$ (and variants like $(r\cdot t)\cdot t$) by $r$. The following definition gives an explicit algorithm to find the result of these operations.

\begin{df}
	For any term $t$ we define its reduced form  $t^\mathrm{red}$ as follows.
	\begin{enumerate}
		\item If $t=x$ is a variable,  $t^\mathrm{red}= x$.
		\item Assume $t=t_1\cdot t_2$.\begin{enumerate}
		\item If $t_1^\mathrm{red}\sim t_2^\mathrm{red}$, then  $t^\mathrm{red}=t_1^\mathrm{red}$.
		\item If  $t_1^\mathrm{red}\not\sim t_2^\mathrm{red}$  and  $t_1^\mathrm{red}\sim r\cdot t_2^\mathrm{red}$, then $t^\mathrm{red} =r$.
		\item If $t_1^\mathrm{red}\not\sim t_2^\mathrm{red}$  and  $t_2^\mathrm{red}\sim r\cdot t_1^\mathrm{red}$, then $t^\mathrm{red} =r$.
		\item Otherwise,  $t^\mathrm{red}= t_1^\mathrm{red}\cdot t_2^\mathrm{red}$.
		\end{enumerate}
		\end{enumerate}
\end{df}

\begin{prop} For every term $t$, the term
	$t^\mathrm{red}$ is reduced and $t\equiv t^\mathrm{red}$.
\end{prop}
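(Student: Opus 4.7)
The plan is to argue by structural induction on $t$, verifying both claims ($t^{\mathrm{red}}$ is reduced, and $t\equiv t^{\mathrm{red}}$) simultaneously. The base case, where $t$ is a variable, is immediate from clause (1) of the definition: $t^{\mathrm{red}}=t=x$ is a variable, hence reduced, and the equivalence is trivial.

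For the inductive step, write $t=t_1\cdot t_2$ and invoke the induction hypothesis to obtain that $t_1^{\mathrm{red}}$ and $t_2^{\mathrm{red}}$ are reduced and that $t_i\equiv t_i^{\mathrm{red}}$ for $i=1,2$. Then I would split along the four cases (a)--(d) of clause (2). In case (a), where $t_1^{\mathrm{red}}\sim t_2^{\mathrm{red}}$, reducedness is immediate since $t^{\mathrm{red}}=t_1^{\mathrm{red}}$, and equivalence follows from Lemma~\ref{May12_3}(3) (which gives $t_1^{\mathrm{red}}(\bar{a})=t_2^{\mathrm{red}}(\bar{a})$) together with the Steiner quasigroup axiom $x\cdot x=x$. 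In cases (b) and (c), which are symmetric, the term $r$ satisfies $r\cdot t_2^{\mathrm{red}}\sim t_1^{\mathrm{red}}$ (respectively $r\cdot t_1^{\mathrm{red}}\sim t_2^{\mathrm{red}}$), so by Lemma~\ref{May12_3}(4) the term $r\cdot t_2^{\mathrm{red}}$ is reduced, and by Lemma~\ref{May12_3}(5) its subterm $r$ is reduced too. For the equivalence in case (b), Lemma~\ref{May12_3}(3) gives $t_1^{\mathrm{red}}(\bar{a})=r(\bar{a})\cdot t_2^{\mathrm{red}}(\bar{a})$, and then the axiom $x\cdot(x\cdot y)=y$ (applied to $x=t_2^{\mathrm{red}}(\bar{a})$, $y=r(\bar{a})$) yields
\[
t(\bar{a})=t_1^{\mathrm{red}}(\bar{a})\cdot t_2^{\mathrm{red}}(\bar{a})=\bigl(r(\bar{a})\cdot t_2^{\mathrm{red}}(\bar{a})\bigr)\cdot t_2^{\mathrm{red}}(\bar{a})=r(\bar{a})=t^{\mathrm{red}}(\bar{a}).
\]
Case (c) is handled symmetrically.

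The main point requiring care is case (d), where $t^{\mathrm{red}}=t_1^{\mathrm{red}}\cdot t_2^{\mathrm{red}}$. Here reducedness needs Lemma~\ref{May12_3}(6). We already have $t_1^{\mathrm{red}}$ and $t_2^{\mathrm{red}}$ reduced and $t_1^{\mathrm{red}}\not\sim t_2^{\mathrm{red}}$ by failure of case (a); and we need to rule out the existence of a term $r$ with $t_1^{\mathrm{red}}\sim t_2^{\mathrm{red}}\cdot r$ or $t_2^{\mathrm{red}}\sim t_1^{\mathrm{red}}\cdot r$. But such a term $r$ is exactly what cases (b) and (c) of the definition would detect, so the "otherwise" of clause (d) is precisely the negation of those hypotheses. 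Therefore Lemma~\ref{May12_3}(6) applies and $t^{\mathrm{red}}$ is reduced. Equivalence in case (d) is immediate: $t(\bar{a})=t_1(\bar{a})\cdot t_2(\bar{a})=t_1^{\mathrm{red}}(\bar{a})\cdot t_2^{\mathrm{red}}(\bar{a})=t^{\mathrm{red}}(\bar{a})$ by the induction hypothesis.

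The only delicate point is the bookkeeping in case (d) to confirm that the negation of cases (a)--(c) matches precisely the hypothesis of Lemma~\ref{May12_3}(6); once that is observed, everything else is routine application of the quasigroup axioms and the parts of Lemma~\ref{May12_3} about $\sim$. No deep new idea is required beyond the induction and case analysis that mirror the recursive definition of $t^{\mathrm{red}}$.
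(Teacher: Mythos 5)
Your proof is correct and is exactly the argument the paper intends: the paper's own proof is the one-liner ``By induction on $t$, using Lemma~\ref{May12_3}'', and your case analysis (with the observation that the ``otherwise'' of clause (d) is precisely the hypothesis of Lemma~\ref{May12_3}(6), since $\sim$ absorbs the order of the factors in $r\cdot t_2^{\mathrm{red}}$ versus $t_2^{\mathrm{red}}\cdot r$) is the natural expansion of that reference. No differences of substance.
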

\begin{proof}
	By induction on $t$, using Lemma~\ref{May12_3}.
\end{proof}

\begin{prop}\label{fin}
	Assume $\bar{x}=x_1,\ldots,x_n$, $t=t(\bar{x},y)$ is reduced  and  $y$ occurs in $t$. The following are equivalent:
	\begin{enumerate}
		\item If  $M$ is a free Steiner quasigroup,  $\bar{a},b_1,b_2\in M$  and  $t(\bar{a},b_1)= t(\bar{a},b_2)$, then $b_1= b_2$.
		\item If $M$ is a free Steiner quasigroup, $\bar{a}\in M$ is a tuple of independent elements and $b_1,b_2\in M$,  and $t(\bar{a},b_1)= t(\bar{a},b_2)$, then $b_1= b_2$.
		\item If $r_1=r_1(\bar{x})$ and  $r_2= r_2(\bar{x})$  are reduced and  $t(\bar{x},r_1(\bar{x}))^\mathrm{red}= t(\bar{x},r_2(\bar{x}))^\mathrm{red}$, then $r_1\sim r_2$.
	\end{enumerate}
\end{prop}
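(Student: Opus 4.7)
The plan is to prove the cyclic implication $1 \Rightarrow 2 \Rightarrow 3 \Rightarrow 1$. The first arrow is immediate, because condition~2 is the restriction of condition~1 to independent tuples~$\bar{a}$.

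For $2 \Rightarrow 3$, fix a free Steiner quasigroup $M$ with a free base of size at least $n$, and let $\bar{a}$ be an $n$-subtuple of that base; by the results of Section~2 the tuple $\bar{a}$ is independent. Set $b_j := r_j(\bar{a})$. The hypothesis $t(\bar{x},r_1(\bar{x}))^{\mathrm{red}} = t(\bar{x},r_2(\bar{x}))^{\mathrm{red}}$ (as terms) gives, by evaluation at $\bar{a}$, the equality $t(\bar{a},b_1) = t(\bar{a},b_2)$. Condition~2 then yields $b_1 = b_2$, i.e.\ $r_1(\bar{a}) = r_2(\bar{a})$; since $r_1$ and $r_2$ are reduced and $\bar{a}$ is independent, Proposition~\ref{May19_1}(2) forces $r_1 \sim r_2$.

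The main direction is $3 \Rightarrow 1$. Let $M$ be free with $\bar{a},b_1,b_2 \in M$ satisfying $t(\bar{a},b_1) = t(\bar{a},b_2)$. By Proposition~\ref{freesubs}(2) the substructure $N := \langle \bar{a},b_1,b_2 \rangle$ is a finitely generated free Steiner quasigroup; fix a free base $\bar{e}$ of~$N$ and write $a_i = \sigma_i(\bar{e})$ and $b_j = \tau_j(\bar{e})$ with reduced terms. Proposition~\ref{May19_1}(2) applied to $\bar{e}$ translates the assumed equality into the syntactic statement
\[ t(\sigma(\bar{z}),\tau_1(\bar{z}))^{\mathrm{red}} \sim t(\sigma(\bar{z}),\tau_2(\bar{z}))^{\mathrm{red}}, \]
where $\bar{z}$ enumerates variables for~$\bar{e}$. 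The goal is to derive $\tau_1 \sim \tau_2$, which is equivalent to $b_1 = b_2$. When $\{a_1,\ldots,a_n,b_1,b_2\}$ is already independent, the conclusion is immediate: this set is then itself a free base of~$N$, and if $b_1 \neq b_2$ the reduced terms $t(\bar{x},y_1)$ and $t(\bar{x},y_2)$, obtained by substituting fresh variables $y_1,y_2$ for $b_1,b_2$, are not $\sim$-equivalent (since $y$ occurs in $t$), contradicting Proposition~\ref{May19_1}(2).

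The hard part, and the main obstacle I anticipate, is the dependent case, where $|\bar{e}| < n+2$ and the substitutions $\sigma_i$ are non-trivial. Condition~3 in its stated form only covers $\sigma_i = x_i$, so a generalisation is required: one needs to know that $t(\sigma(\bar{z}),\cdot)$ is still injective on reduced terms modulo $\sim$. My strategy would be induction on the total length of $(\sigma_1,\ldots,\sigma_n)$, mirroring the style of Lemmas~\ref{March_8_23_1} and~\ref{March_8_23_2}, which already control how reduced forms behave under composition of terms. The base case would be condition~3 itself; the inductive step would peel off one non-variable occurrence from some $\sigma_i$, showing that the reduced form of $t(\sigma,\tau_j)$ factors through a strictly simpler substitution while preserving the required injective dependence on $\tau_j$. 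The unpredictable cancellations arising when reducible expressions are substituted for variables are what make this step delicate, and I expect that a case analysis on the syntactic shape of $t$, analogous to the one in the proof of Lemma~\ref{March_8_23_2}, will be needed to close it.
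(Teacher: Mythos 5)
Your $1 \Rightarrow 2$ and $2 \Rightarrow 3$ are correct and essentially identical to the paper's arguments. The problem is $3 \Rightarrow 1$: you correctly reduce it to a syntactic claim --- that condition 3 survives substituting arbitrary reduced terms $\sigma_i(\bar{z})$ for the variables $\bar{x}$, i.e.\ that $t(\sigma(\bar{z}),\tau_1(\bar{z}))^{\mathrm{red}} \sim t(\sigma(\bar{z}),\tau_2(\bar{z}))^{\mathrm{red}}$ still forces $\tau_1 \sim \tau_2$ --- but you then only describe a plan for proving that claim (induction on the total length of the $\sigma_i$, ``mirroring'' Lemmas~\ref{March_8_23_1} and~\ref{March_8_23_2}) and explicitly leave the inductive step open. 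As written, the cycle of implications is not closed: what you have for the hard direction is an outline of an approach together with an accurate diagnosis of why it is delicate, not a proof.

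For comparison, the paper does not attempt $3 \Rightarrow 1$ in one step. It closes the cycle with $3 \Rightarrow 2$ and $2 \Rightarrow 1$ separately. In $3 \Rightarrow 2$ it reduces to the case where $\bar{a}$ is a base, so that $b_i = r_i(\bar{a})$ for reduced $r_i(\bar{x})$ and condition 3 applies verbatim, with no substitution into the $\bar{x}$-slots at all. In $2 \Rightarrow 1$ it expresses $a_i = t_i(\bar{c})$ over a finite base $\bar{c}$, forms $t'(\bar{z},y) = t(t_1(\bar{z}),\ldots,t_n(\bar{z}),y)^{\mathrm{red}}$, and applies the injectivity hypothesis over the independent tuple $\bar{c}$. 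That last step is precisely the point you flagged --- the hypothesis must be transported from $t$ to the substituted term $t'$ --- so you have located where the substance of the argument lies; but identifying the obstacle is not the same as overcoming it, and your proposal supplies no argument that does so. To be acceptable, your write-up would need either to carry out the induction you sketch, or to restructure the proof along the paper's lines so that the substitution issue is confined to (and resolved in) a single step.
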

\begin{proof}
	1 $\Rightarrow$ 2 is clear. 
	
	2 $\Rightarrow$ 1.  Let $\bar{a}= a_1,\ldots,a_n$.  We may assume $M$ has a finite base  $\bar{c}= c_1,\ldots,c_m$.  For each $i$, let  $t_i(\bar{z})$ be a reduced term such that $t_i(\bar{c})= a_i$  and let  $t^\prime(\bar{z},y)= t(t_1(\bar{z}),\ldots,t_n(\bar{z}),y)^\mathrm{red}$. Hence $t^\prime(\bar{z},y)\equiv t(t_1(\bar{z}),\ldots,t_n(\bar{z}),y)$  and $y$ occurs in $t^\prime(\bar{z},y)$. By assumption,  if $b_1,b_2\in M$ and  $t^\prime(\bar{c},b_1)= t^\prime(\bar{c},b_2)$, then $b_1=b_2$. Suppose $t(\bar{a},b_1)=t(\bar{a},b_2)$. Then  $t^\prime(\bar{c},b_1)= t^\prime(\bar{c},b_2)$  and, therefore, $b_1=b_2$.
	
	2 $\Rightarrow$ 3.  Let $t,r_1,r_2$ be as in the assumption and suppose $t(\bar{x},r_1(\bar{x}))^\mathrm{red}= t(\bar{x},r_2(\bar{x}))^\mathrm{red}$. We claim that $r_1\sim r_2$.  Let $M$ be a free Steiner quasigroup  with base $\bar{a}= a_1,\ldots,a_n$, let $b_1= r_1(\bar{a})$ and $b_2 = r_2(\bar{a})$. Let $t_i^\prime(\bar{x})= t(\bar{x},r_i(\bar{x}))^\mathrm{red}$  for $i=1,2$. Then $t_1^\prime\sim t_2^\prime$ and, therefore, $t_1^\prime(\bar{a})= t_2^\prime(\bar{a})$. But $t_i^\prime(\bar{a})= t(\bar{a},b_i)$. Hence  $t(\bar{a},b_1)=t(\bar{a},b_2)$. By the assumption in 2, $b_1=b_2$, that is $r_1(\bar{a})=r_2(\bar{a})$. Then  $r_1\sim r_2$ by Proposition~\ref{May19_1}. 
	
	3 $\Rightarrow$ 2. We may assume  $\bar{a}$ is a base of $M$. Suppose $t(\bar{a},b_1)= t(\bar{a},b_2)$.  Choose reduced terms $r_1,r_2$  such that $b_i=r_i(\bar{a})$  for $i=1,2$. Let $t_i^\prime=t(\bar{x},r_i(\bar{x}))$. Then $t_1^\prime(\bar{a})=t_2^\prime(\bar{a})$.  By Proposition~\ref{May19_1}, $t_1^\prime\sim t_2^\prime$. By assumption 3,  $r_1\sim r_2$, Hence  $b_1=r_1(\bar{a}) =r_2(\bar{a})= b_2$.
\end{proof}

By Lemma~\ref{June9_2}, the equivalent conditions in Proposition~\ref{fin} hold when $t=t(\bar{x},y)$ has only one occurrence of $y$. We conjecture that the conditions hold for every term $t$. This has important consequences for model-theoretic algebraic and definable closures in a free Steiner quasigroup.

\begin{prop}  Assume the equivalent conditions of Proposition~\ref{fin} hold. Let  $\bar{a}$ be a tuple of independent elements in a free Steiner quasigroup $M$  and let $b\in M$. The following are equivalent:
	\begin{enumerate}
		\item $\bar{a}b$ is not independent
		\item $b$ is definable over $\bar{a}$ in $\langle \bar{a}b\rangle$
			\item $b$ is algebraic over $\bar{a}$ in $\langle \bar{a}b\rangle$.
	\end{enumerate}
	Moreover, item 1 implies that $b$ is definable (and hence algebraic) over $\bar{a}$ in $M$.
	\end{prop}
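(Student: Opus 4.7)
The plan is to establish the cyclic implications $1 \Rightarrow 2 \Rightarrow 3 \Rightarrow 1$; the moreover clause will fall out of the $1 \Rightarrow 2$ argument. The step $2 \Rightarrow 3$ is immediate, since an element uniquely defined by a formula with parameters in $\bar{a}$ is in particular a solution of a formula with finitely many solutions, so definability of $b$ over $\bar{a}$ in $\langle \bar{a}b \rangle$ yields algebraicity of $b$ over $\bar{a}$ in $\langle \bar{a}b \rangle$.

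For $1 \Rightarrow 2$ together with the moreover clause: by Proposition~\ref{May26_2}, the failure of independence of $\bar{a}b$ produces a reduced term $t(\bar{x},y)$ in which $y$ occurs and for which $t(\bar{a},b) \in \langle \bar{a} \rangle$. Choose a reduced $s(\bar{x})$ with $s(\bar{a}) = t(\bar{a},b)$. The formula $\varphi(y) : t(\bar{a},y) = s(\bar{a})$ has parameters in $\bar{a}$ and is satisfied by $b$ in $M$. Invoking the standing hypothesis --- that the equivalent conditions of Proposition~\ref{fin} hold for the term $t$ --- tells us that $t(\bar{a},b_1) = t(\bar{a},b_2)$ forces $b_1 = b_2$ throughout $M$, so $\varphi$ has $b$ as its unique solution in all of $M$. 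Hence $b$ is definable over $\bar{a}$ in $M$, and a fortiori in $\langle \bar{a}b \rangle$, yielding the moreover clause and the implication $1 \Rightarrow 2$ simultaneously.

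For $3 \Rightarrow 1$, I argue by contraposition. Assume $\bar{a}b$ is independent. By Proposition~\ref{June9_3}, for every reduced term $t(\bar{x},y)$ with exactly one occurrence of $y$, the assignment $\bar{a} \mapsto \bar{a}$, $b \mapsto t(\bar{a},b)$ extends to an automorphism $\sigma_t$ of $\langle \bar{a}b \rangle$ fixing $\bar{a}$ pointwise. Proposition~\ref{May19_1}(2), applied to the independent tuple $\bar{a}b$, guarantees that non-equivalent reduced terms $t_1 \not\sim t_2$ give distinct values, and hence distinct automorphism images $\sigma_{t_1}(b) \neq \sigma_{t_2}(b)$. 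The sequence
\[ y,\ x_1 \cdot y,\ x_2 \cdot (x_1 \cdot y),\ x_1 \cdot (x_2 \cdot (x_1 \cdot y)),\ \ldots\]
obtained by alternately prepending $x_2$ and $x_1$ consists of pairwise non-equivalent reduced terms in $\bar{x},y$ each with a single occurrence of $y$, so the orbit of $b$ under the automorphisms of $\langle \bar{a}b \rangle$ fixing $\bar{a}$ is infinite. Any formula over $\bar{a}$ satisfied by $b$ is therefore satisfied by every member of this orbit and so has infinitely many solutions in $\langle \bar{a}b \rangle$; this contradicts the algebraicity of $b$ over $\bar{a}$ in $\langle \bar{a}b \rangle$.

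The main delicate point is the combinatorial verification that the displayed terms are reduced and pairwise non-equivalent: this is a short induction exploiting the alternation $x_1, x_2, x_1, \ldots$ to exclude each forbidden subterm pattern in the definition of reducedness, together with a length comparison ruling out equivalence between terms of different depth. Small dimensional cases of $\bar{a}$, where this alternating enumeration is not available, make $\langle \bar{a}b \rangle$ itself very small and can be inspected directly.
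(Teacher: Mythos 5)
Your proposal is correct. The implications $1 \Rightarrow 2$ (with the moreover clause) and $2 \Rightarrow 3$ coincide with the paper's argument: both extract a reduced term $t(\bar{x},y)$ containing $y$ with $t(\bar{a},b)\in\langle\bar{a}\rangle$ from Proposition~\ref{May26_2} and then use the injectivity supplied by the standing hypothesis to turn $t(\bar{a},y)=s(\bar{a})$ into a defining formula. Where you genuinely diverge is $3 \Rightarrow 1$. The paper elementarily extends $\langle\bar{a}b\rangle$ to a free Steiner quasigroup with infinite base $\{\bar{a}\}\cup\{b_n\mid n\in\omega\}$ (invoking the result from~\cite{barcas3} that this extension is elementary), produces an infinite orbit of $b$ there by permuting the base, and pulls non-algebraicity back along the elementary extension. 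You instead stay inside $\langle\bar{a}b\rangle$: Proposition~\ref{June9_3} turns each reduced term with a single occurrence of $y$ into an automorphism fixing $\bar{a}$, and the alternating family $y,\ x_1\cdot y,\ x_2\cdot(x_1\cdot y),\dots$ of pairwise non-equivalent reduced terms, combined with Corollary~\ref{May19_3}, yields an infinite orbit of $b$ over $\bar{a}$ directly. Your route is more self-contained --- it uses only the machinery already developed in the paper and avoids the external elementarity result --- at the cost of the (routine but necessary) verification that your terms are reduced and pairwise non-equivalent, which you correctly reduce to the alternation of $x_1,x_2$ and a length count. One caveat you should state more carefully: your construction needs $|\bar{a}|\geq 2$, and your remark that smaller cases ``can be inspected directly'' is misleading, since for $|\bar{a}|=1$ the tuple $\bar{a}b$ is independent yet $\langle\bar{a}b\rangle$ is the three-element quasigroup, where $b$ \emph{is} algebraic over $\bar{a}$; the equivalence genuinely requires the paper's standing assumption excluding bases of size $<3$, a restriction the paper's own proof also relies on implicitly.
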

	\begin{proof}
		1 $\Rightarrow$ 2.  Assume $\bar{a}b$ is not independent. By Proposition~\ref{May26_2}, there is a reduced term $t(\bar{x},y)$ containing $y$ such that $t(\bar{a},b)\in\langle \bar{a}\rangle$. Then $t(\bar{a},b) = r(\bar{a})$  for some reduced term $r(\bar{x})$. By the conditions in Proposition~\ref{fin}, $M$  and  $\langle \bar{a},b\rangle$  satisfy the  sentence
		\[\forall y_1y_2 \, \left(t(\bar{a},y_1)= t(\bar{a},y_2)\rightarrow y_1=y_2 \right) \, .\]
It follows that  the equation  $t(\bar{a},y) = r(\bar{a})$ defines $b$ in these structures.
		
		2 $\Rightarrow$ 3  is clear.
		
		3 $\Rightarrow$ 1.  Assume $\bar{a}b$ is independent.  As shown in~\cite{barcas3}, we can elementarily extend $\langle \bar{a},b\rangle $ to a free Steiner quasigroup $N$  with an infinite base  $\{\bar{a}\}\cup\{b_n\mid n\in \omega\}$  with  $b=b_0$.  For every $n$ there is a permutation of the base fixing $\bar{a}$ and sending $b$ to $b_n$, and these permutations can be extended to automorphisms of $N$. Hence, the orbit of $b$ over $\bar{a}$ is infinite in $N$ and $b$ is not algebraic over $\bar{a}$ in $N$. Since the extension is elementary, $b$ is not algebraic over $\bar{a}$ in $\langle \bar{a}b\rangle$.
	\end{proof}

	\noindent{\sc
Sezione di Matematica\\
Universit\`a di Camerino\\
{\tt silvia.barbina@unicam.it}

\noindent{\sc
Departament de Matem\`atiques i Inform\`atica\\
Universitat de Barcelona}\\
{\tt e.casanovas@ub.edu}\\

\end{document}